\newcommand{\ZZ}{\mathbb{Z}}
\newcommand{\RR}{\mathbb{R}}
\newcommand{\QQ}{\mathbb{Q}}
\newcommand{\CC}{\mathbb{C}}
\newcommand{\PP}{\mathbb{P}}
\newcommand{\OO}{\mathcal{O}}
\newtheorem*{question*}{Question}
\newcommand{\sym}{\textnormal{Sym}}
\newcommand{\McL}{\mathcal{L}}
\newcommand{\McS}{\mathcal{S}}
\newcommand{\ra}{\rightarrow}
\newcommand{\Alb}{\textnormal{Alb}}
\newcommand{\Pic}{\textnormal{Pic}}
\newcommand{\Nef}{\textnormal{Nef}}
\newcommand{\PEff}{\textnormal{PEff}}
\newtheorem{theorem}{Theorem}[section]
\newtheorem*{theorem*}{Theorem}
\newtheorem*{conjecture*}{Conjecture}
\newtheorem{conjecture}{Conjecture}
\newtheorem{corollary}{Corollary}[theorem]
\newtheorem{lemma}[theorem]{Lemma}
\newtheorem*{lemma*}{Lemma}
\newtheorem{definition}[theorem]{Definition}
\newtheorem{proposition}[theorem]{Proposition}
\newtheorem*{example*}{Example}
\newtheorem*{remark*}{Remark}
\newtheorem{notation}{Notation}[theorem]
\title{Effective Eigendivisors and the Kawaguchi-Silverman Conjecture}
\author{Brett Nasserden}
\address{Department of Pure Mathematics \\
	University of Waterloo }
\email{bnasserd@uwaterloo.ca}
\begin{document}

\begin{abstract}
Let $f\colon X\ra X$ be a surjective endomorphism of a normal projective variety defined over a number field. The dynamics of $f$ may be studied through the dynamics of the linear action $f^*\colon \Pic(X)_\RR\ra \Pic(X)_\RR$, which are governed by the spectral theory of $f^*$. Let $\lambda_1(f)$ be the spectral radius of $f^*$. We study $\QQ$-divisors $D$ with $f^*D=\lambda_1(f) D$ and $\kappa(D)=0$ where $\kappa(D)$ is the Iitaka dimension of the divisor $D$. We analyze the base locus of such divisors and interpret the set of small eigenvalues in terms of the canonical heights of Jordan blocks described by Kawaguchi and Silverman. Finally we identify a linear algebraic condition on surjective morphisms that may be useful in proving instances of the Kawaguchi-Silverman conjecture.  
\end{abstract}

\maketitle 

\section*{Acknowledgements}

I would like to thank Yohsuke Matsuzawa and De-Qi Zhang for their patience and kindness when pointing out and explaining a critical error in a previous version of this article. The claims made in this updated article are much more modest then the previous version.

\section{Introduction}
Let $f\colon X\ra X$ be a surjective endomorphism of a  normal projective variety defined over $\bar{\QQ}$. Our goal in this article will be to understand both the arithmetic and geometric aspects of $f$ under iteration. Associated to $f$ are two basic numerical invariants that have been well studied, see for example (\cite{MR4080251},\cite{MR4068299},\cite{MR3189467},\cite{MR3871505},\cite{1802.07388}).  The \emph{dynamical degree} is defined as \begin{equation}
\lambda_1(f):=\lim_{n\ra \infty}((f^n)^*H \cdot H^{\dim X-1})^{\frac{1}{n}},\end{equation}
where $H$ is any ample divisor. We think of the dynamical degree as a \emph{geometric} measure of the complexity of $f$ under iteration. There is also the arithmetic degree of a point $P\in X(\bar{\QQ})$

\begin{equation}
\alpha_f(P):=\lim_{n\ra\infty}h_X^+(f^n(P))^\frac{1}{n},
\end{equation}
where $h_X$ is any ample height function on $X$. We think of the arithmetic degree as a measure of the \emph{arithmetic} complexity of the $f$-orbit of $P$. More precisely the arithmetic degree measures the rate of growth of the heights of the points $f^n(P)$. The Kawaguchi-Silverman conjecture predicts a certain type of ergodic relationship between these invariants.
\begin{conjecture}[\cite{MR4080251}]
Let $X$ be a normal projective variety defined over $\bar{\QQ}$ and let $f\colon X\ra X$ be a surjective endomorphism. Let $P\in X(\bar{\QQ})$ and suppose that the orbit $\OO_f(P):=\{P,f(P),f^2(P),...\}$ is Zariski dense in $X$. Then $\alpha_f(P)=\lambda_1(f)$. 
\end{conjecture}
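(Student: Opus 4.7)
The plan is to attack the conjecture by constructing a suitable canonical height attached to an eigendivisor of $f^*$ and then forcing its vanishing locus to avoid Zariski-dense orbits. I note first that the upper bound $\alpha_f(P)\le \lambda_1(f)$ is already known in full generality (Kawaguchi--Silverman for polarized maps, Matsuzawa in general), so the real task is the lower bound $\alpha_f(P)\ge \lambda_1(f)$ whenever $\OO_f(P)$ is Zariski dense. Throughout, one may assume $\lambda_1(f)>1$, since otherwise every arithmetic degree equals $1$ and the statement is trivial.

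First I would produce an eigendivisor. Applying a Perron--Frobenius argument to the $f^*$-invariant closed cone $\Nef(X)\subseteq \Pic(X)_\RR$ yields a nonzero nef $\RR$-divisor $D$ with $f^*D=\lambda_1(f)D$. Next, I would build a Call--Silverman canonical height
\begin{equation}
\hat{h}_D(P)=\lim_{n\to\infty}\lambda_1(f)^{-n}h_D(f^n(P)),
\end{equation}
where $h_D$ is a Weil height attached to $D$; the usual telescoping argument, together with the nefness of $D$ to control the error terms, gives $\hat{h}_D\circ f=\lambda_1(f)\hat{h}_D$ and $\hat{h}_D=h_D+O(1)$. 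As soon as $\hat{h}_D(P)>0$, standard height comparisons deliver $\alpha_f(P)\ge \lambda_1(f)$, which combined with the known upper bound yields equality.

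The hard step is therefore to show that the vanishing locus
\begin{equation}
Z(D):=\{P\in X(\Qbar):\hat{h}_D(P)=0\}
\end{equation}
is contained in a proper Zariski-closed subset of $X$. This is where the geometry of $D$ must enter: when $\kappa(D)=0$ one expects the augmented base locus of $D$, together with its iterated $f$-preimages, to govern $Z(D)$, in the spirit of the base-locus analysis announced in the abstract. If $D$ happened to be semiample one could pass to an Iitaka fibration and argue directly, but the interesting case is precisely when $D$ is rigid, and extracting geometric information from the single effective representative of $D$ is the principal obstacle.

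Finally, one must account for the possibility that $\lambda_1(f)$ appears in a nontrivial Jordan block of $f^*$, or that other eigenvalues of comparable modulus produce competing contributions to $h_{f^n(P)}$. Here I would invoke the refined canonical heights attached to Jordan blocks due to Kawaguchi and Silverman, using the spectral decomposition of $f^*$ on $\Pic(X)_\RR$ to isolate the leading $\lambda_1(f)^n n^k$ term in the growth of $h_D(f^n(P))$ and to show that no cancellation between blocks can occur on a Zariski-dense orbit. The main obstacle remains the exceptional-set step: absent additional hypotheses (for example, $X$ a Mori dream space, or $D$ big, or strong restrictions on the Jordan structure of $f^*$), there is no general mechanism at present to rule out Zariski-dense collections of points of arithmetic degree strictly less than $\lambda_1(f)$, which is precisely why the conjecture remains open and why the paper's more modest aim is to isolate a linear-algebraic condition on $f^*$ under which the exceptional set is provably thin.
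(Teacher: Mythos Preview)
The statement you are addressing is the Kawaguchi--Silverman conjecture itself, which the paper states as an open conjecture and does \emph{not} prove. There is therefore no proof in the paper to compare your proposal against; the paper's contribution is precisely to isolate partial results and the linear-algebraic ``good eigenspace'' condition under which the argument you sketch can be completed. You recognize this yourself in your final paragraph, so your proposal is best read not as a proof but as a (broadly accurate) roadmap of the standard strategy together with an honest acknowledgement of where it stalls.

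That said, two technical points in your outline deserve correction. First, the Perron--Frobenius step produces a nef eigendivisor in $N^1(X)_\RR$, not in $\Pic(X)_\RR$: the nef cone is defined up to numerical equivalence, and lifting the eigen-equation $f^*D\equiv_{\mathrm{Num}}\lambda_1(f)D$ to linear equivalence $f^*D\sim_\RR \lambda_1(f)D$ is already a nontrivial issue that the paper treats separately (see the discussion around Proposition~\ref{prop:eigenvaluesize} and Corollary~\ref{cor:uniquelift}). With only numerical equivalence the error term in the canonical height is $O(\sqrt{h_X^+})$ rather than $O(1)$, which matters for the vanishing-locus analysis. Second, your claim that ``nefness of $D$'' is what controls the telescoping error is not quite right; what is actually used is the inequality $\lambda_1(f)>\sqrt{\lambda_1(f)}$ in Theorem~\ref{thm:canheight1}, and nefness plays no role there.

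Your identification of the genuine obstacle---showing that the set $\{P:\hat h_{D_i}(P)=0\text{ for all }i\le l\}$ contains no Zariski-dense orbit---is exactly what the paper isolates in Proposition~\ref{prop:altsAND} and its corollary, and the paper's ``good eigenspace'' hypothesis is designed precisely to force one of the $D_i$ to have $\kappa(D_i)>0$ so that Proposition~\ref{prop:Lotsofsections} applies. Absent such a hypothesis the step remains open, as you correctly note.
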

In (\cite[Proposition 3.6]{MR4070310}) it was shown that the Kawaguchi-Silverman conjecture holds if one can find a non-trivial $\QQ$-divisor $D$ with $f^*D\sim_\QQ \lambda_1(f)D$ and $\kappa(D)>0$, where $\kappa(D)$ is the Iitaka dimension of $D$. Here we begin a study of what occurs when we allow $\kappa(D)=0$. We obtain some simple results on the invariance of such an eigendivisor.

\begin{theorem}
	Let $X$ be a normal projective variety and let $f\colon X\ra X$ be a surjective endomorphism. Let $D$ be a non-principal divisor with $f^*D\sim_\QQ\lambda D$ for some integral $\lambda>1$. Suppose that $\kappa(D)=0$. Then there is a integral multiple of $D^\prime$ of $D$ such that 
	
	\[f^{-1}(\textnormal{Bs}( D^\prime))=\textnormal{Bs}( D^\prime).\]
	
\end{theorem}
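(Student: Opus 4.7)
The plan is to exploit $\kappa(D) = 0$ to cut $|mD|$ down to a single effective representative for an appropriate multiple $m$, and then to compare $f^{*}E$ with $\lambda E$ inside the linear system $|\lambda D'|$, which will also have only one effective representative.

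First I would verify that $h^{0}(X, mD) \leq 1$ for every $m \geq 1$: if $mD$ had two linearly independent global sections, their ratio would be a non-constant rational function on $X$, giving a rational map $X \dra \PP^{1}$ whose image is at least one-dimensional, forcing $\kappa(D) \geq 1$. Since $\kappa(D) = 0$ (and not $-\infty$), the semigroup $S := \{m \geq 1 : h^{0}(X, mD) = 1\}$ is non-empty, and as $S$ is closed under multiplication by positive integers, one has $\lambda \cdot S \subseteq S$.

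Next I would choose an integer $m$ simultaneously satisfying (i) $mD$ is an integral Weil divisor, (ii) $m(f^{*}D - \lambda D) = \div(\psi)$ for some $\psi \in k(X)^{\times}$ (possible because $f^{*}D \sim_{\QQ} \lambda D$), and (iii) $m \in S$. Such an $m$ exists as a common multiple of a denominator for $D$, an integer witnessing the $\QQ$-linear equivalence, and any chosen element of $S$. Setting $D' := mD$, the linear system $|D'|$ contains a unique effective member $E$, and non-principality of $D$ ensures $E \neq 0$, so $\textnormal{Bs}(D')$ coincides (as a set) with the support of $E$.

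The final step is to compare $f^{*}E$ and $\lambda E$. By the choice of $m$, the equivalence $f^{*}D' \sim \lambda D'$ is an honest linear equivalence, so $f^{*}E \sim \lambda D'$; moreover $f^{*}E$ is effective as the pullback of an effective divisor under a surjective morphism of normal projective varieties. The divisor $\lambda E$ is also effective and linearly equivalent to $\lambda D'$. Because $\lambda m \in S$, we have $h^{0}(X, \lambda D') = 1$, so $|\lambda D'|$ admits only one effective representative; hence $f^{*}E = \lambda E$. Taking supports and using that $\textnormal{Supp}(f^{*}E) = f^{-1}(\textnormal{Supp}(E))$ for pullback of effective divisors along a surjective morphism gives the desired equality $f^{-1}(\textnormal{Bs}(D')) = \textnormal{Bs}(D')$.

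The main obstacle I anticipate is purely technical, namely handling the pullback of Weil divisors cleanly on the normal (possibly non-$\QQ$-factorial) variety $X$ and checking that effectivity together with the support identity are preserved under $f^{*}$. The substantive content of the proof is small: it is powered entirely by the one-dimensionality of $H^{0}(X, \OO_{X}(D'))$ and $H^{0}(X, \OO_{X}(\lambda D'))$ that $\kappa(D) = 0$ forces.
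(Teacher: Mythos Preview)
Your proposal is correct and follows essentially the same approach as the paper: both arguments hinge on the observation that $\kappa(D)=0$ forces $h^{0}(X,mD)\leq 1$, so that after passing to a suitable multiple $D'$ the pullback map $f^{*}\colon H^{0}(X,D')\to H^{0}(X,\lambda D')$ is an isomorphism of one-dimensional spaces, which pins down the base locus. The only cosmetic difference is that the paper phrases the key step in terms of sections vanishing at points (showing $P\in\textnormal{Bs}(D')\iff f(P)\in\textnormal{Bs}(D')$ via $s(f(P))=(f^{*}s)(P)$), whereas you phrase it in terms of effective divisors (the uniqueness of the effective member of $|\lambda D'|$ forces $f^{*}E=\lambda E$, then take supports); these are equivalent formulations of the same idea. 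One small remark: your claim that non-principality of $D$ forces $E\neq 0$ is not quite right if $D$ is torsion in $\Pic(X)$, but in that case $\textnormal{Bs}(D')=\emptyset$ and the conclusion is trivial, so the argument goes through regardless.
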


Let $f\colon X\ra X$ be a surjective endomorphism and $H$ an integral ample divisor on $X$. Set $V_H=\textnormal{span}_\QQ\{(f^n)^*H\}$. In (\cite{MR4068299}) it was shown that $V_H$ is finite dimensional and that there are \emph{canonical height functions} associated to the Jordan blocks of $f^*\colon V_H\ra V_H$. Let $\lambda_1,...,\lambda_l$  be the eigenvalues of $f^*\mid_{V_H}$ with $\lambda_1(f)=\mid \lambda_i \mid$. If  $\hat{h}_{\lambda_i}$ are the canonical height functions associated to a Jordan form of $f^*\mid_{V_H}$ then we have the following results.

\begin{theorem}
Let $G_{f,H}=\{P\in X(K): \hat{h}_{\lambda_i}(P)=0,\ 1\leq i\leq l\}.$ If $G_{f,H}$ is not Zariski dense then the Kawaguchi-Silverman conjecture holds for $f$.	
\end{theorem}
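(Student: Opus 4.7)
The plan is to pick any $P \in X(\bar{\QQ})$ whose $f$-orbit is Zariski dense and produce the equality $\alpha_f(P) = \lambda_1(f)$. Because the upper bound $\alpha_f(P) \leq \lambda_1(f)$ holds unconditionally for surjective endomorphisms of a normal projective variety, the whole content is to establish the reverse inequality.

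First I would use the hypothesis to move off the bad locus. The non-density of $G_{f,H}$ gives a proper closed subvariety $Z \supseteq G_{f,H}$, and the Zariski density of $\mathcal{O}_f(P)$ forces some iterate $Q := f^{n_0}(P) \notin Z$, so in particular $Q \notin G_{f,H}$. By the very definition of $G_{f,H}$ there exists an index $i \in \{1,\ldots,l\}$ with $\hat{h}_{\lambda_i}(Q) \neq 0$.

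Next I would invoke the defining property of the canonical heights in \cite{MR4068299}: whenever $\hat{h}_{\lambda_i}(Q) \neq 0$, iterating the transformation rule for $\hat{h}_{\lambda_i}$ (accounting for the Jordan corrections) yields $\limsup_m |\hat{h}_{\lambda_i}(f^m(Q))|^{1/m} = |\lambda_i| = \lambda_1(f)$. Combined with the standard comparison $|\hat{h}_{\lambda_i}| \leq C \max\{h_X^+,1\}$, this forces $h_X^+(f^m(Q))^{1/m}$ to accumulate at least to $\lambda_1(f)$, so $\alpha_f(Q) \geq \lambda_1(f)$. A direct manipulation of the limit defining $\alpha_f$ gives that the arithmetic degree is invariant under iteration, $\alpha_f(Q) = \alpha_f(f^{n_0}(P)) = \alpha_f(P)$, so $\alpha_f(P) \geq \lambda_1(f)$ and we are done.

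The main obstacle is the Jordan bookkeeping. When $f^*|_{V_H}$ is non-semisimple at $\lambda_i$, the canonical height attached to a non-top slot in the block satisfies a transformation rule with genuine error terms coming from higher slots, so the clean relation $\hat{h}_{\lambda_i}(f(Q)) = \lambda_i \hat{h}_{\lambda_i}(Q)$ fails and the growth of $|\hat{h}_{\lambda_i}(f^m(Q))|$ can a priori be polynomial times exponential. One must verify that the $\limsup$ of $|\hat{h}_{\lambda_i}(f^m(Q))|^{1/m}$ is still exactly $|\lambda_i|$ whenever the height does not vanish, using the precise normalization from \cite{MR4068299}; this is the only nontrivial input beyond the cited construction. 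A secondary subtlety is that conjugate pairs of complex eigenvalues are packaged into real-valued canonical heights, but this does not affect the growth rate and hence not the argument.
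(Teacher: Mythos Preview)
Your argument is correct, but it is organized differently from the paper's. The paper shows that $G_{f,H}$ is forward $f$-invariant: if $\hat{h}_{\lambda_j}(P)=0$ for all $j\leq l$, then the Jordan transformation rule $\hat{h}_{\lambda_i}\circ f=\lambda_i\hat{h}_{\lambda_i}+\hat{h}_{\lambda_{i-1}}$ together with induction on $i$ forces $\hat{h}_{\lambda_j}(f(P))=0$ for all $j\leq l$ as well. Hence $P\in G_{f,H}$ would give $\mathcal{O}_f(P)\subseteq G_{f,H}$, contradicting density, so $P\notin G_{f,H}$ outright. Your route instead uses density of the orbit to locate an iterate $Q=f^{n_0}(P)\notin G_{f,H}$ and then transports the conclusion back via the elementary invariance $\alpha_f(Q)=\alpha_f(P)$. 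Both arguments feed into the same endgame from \cite{MR4068299}, namely that $\hat{h}_{\lambda_i}(Q)\neq 0$ for some $i\leq l$ forces $\alpha_f(Q)=|\lambda_i|=\lambda_1(f)$; the paper simply cites that result, while you re-derive it as a growth estimate. The paper's forward-invariance step is slightly cleaner because it avoids passing to an iterate and gives the structural fact that $G_{f,H}$ is $f$-stable (used again in the subsequent corollary), whereas your approach has the advantage of not needing the inductive vanishing argument along the Jordan chain. Your identified ``obstacle'' is genuine but harmless: within a single Jordan block the iterated height is $\lambda_i^m$ times a nonzero polynomial in $m$, so the $m$-th root limit is unaffected.
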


As a corollary we obtain 

\begin{corollary}
The Kawaguchi-Silverman conjecture is true for $f$ $\iff$  $G_{f,H}$ contains no dense orbit for $f$.	
	
\end{corollary}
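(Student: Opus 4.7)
The plan is to derive the corollary from two well-known facts about the canonical heights associated to $f^*$, together with a short logical unpacking of what it means for an orbit to lie in $G_{f,H}$. The preceding theorem is not needed here, since ``not Zariski dense'' is strictly stronger than ``contains no dense orbit.''

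The first ingredient is that $G_{f,H}$ is forward $f$-invariant. For each eigenvalue $\lambda_i$ of $f^*|_{V_H}$ of maximum modulus, fix a cyclic Jordan basis $D^{(i)}_1,\dots,D^{(i)}_{k_i}$ of the associated generalized eigenspace with $f^*D^{(i)}_j = \lambda_i D^{(i)}_j + D^{(i)}_{j-1}$ (and $D^{(i)}_0 = 0$). The canonical heights constructed in \cite{MR4068299} satisfy a transformation law of the form
\[
\hat{h}_{D^{(i)}_j}(f(P)) \;=\; \lambda_i\,\hat{h}_{D^{(i)}_j}(P) \;+\; \hat{h}_{D^{(i)}_{j-1}}(P).
\]
If $P \in G_{f,H}$ then every $\hat{h}_{D^{(i)}_j}(P)$ vanishes, and induction on $j$ forces every $\hat{h}_{D^{(i)}_j}(f(P))$ to vanish as well. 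Thus $f(P) \in G_{f,H}$, and so $\OO_f(P) \subseteq G_{f,H}$ for every $P \in G_{f,H}$.

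The second ingredient, again from \cite{MR4068299}, is the dichotomy $\alpha_f(P) = \lambda_1(f)$ if and only if some $\hat{h}_{\lambda_i}(P) \ne 0$ for a top-modulus eigenvalue $\lambda_i$; equivalently, $P \in G_{f,H}$ if and only if $\alpha_f(P) < \lambda_1(f)$. This is precisely the statement that nonvanishing of any top canonical height forces the Weil height $h_X(f^n(P))$ to grow at rate $\lambda_1(f)^n$, while simultaneous vanishing produces strictly slower growth.

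Given these two facts the corollary is a short logical unpacking. For the ($\Rightarrow$) direction, if KSC holds for $f$ and some $P \in G_{f,H}$ has Zariski dense orbit, then KSC yields $\alpha_f(P) = \lambda_1(f)$, contradicting $P \in G_{f,H}$; hence no such dense orbit exists in $G_{f,H}$. For the ($\Leftarrow$) direction, suppose $G_{f,H}$ contains no dense orbit and let $P$ have Zariski dense orbit. If $P$ were in $G_{f,H}$, forward invariance would give $\OO_f(P) \subseteq G_{f,H}$, producing a dense orbit inside $G_{f,H}$, a contradiction; so $P \notin G_{f,H}$, whence $\alpha_f(P) = \lambda_1(f)$ and KSC holds for $P$. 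The only substantive step is the forward invariance, which I expect to be the main obstacle only insofar as one has to track the Jordan-block conventions of \cite{MR4068299} carefully; beyond that the argument is formal.
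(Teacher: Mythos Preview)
Your proof is correct and follows essentially the same approach as the paper. Both directions rest on the same two ingredients---forward $f$-invariance of $G_{f,H}$ via the Jordan-block transformation law $\hat{h}_{D_i}\circ f = \lambda_i \hat{h}_{D_i} + \hat{h}_{D_{i-1}}$, and the dichotomy $P\in G_{f,H}\iff \alpha_f(P)<\lambda_1(f)$ from \cite{MR4068299}---and the paper's $(\Leftarrow)$ direction invokes exactly the forward-invariance computation you wrote out (it cites the argument inside the preceding proposition rather than restating it).
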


Finally we study the consequences of the following condition.

\begin{definition}
	Let $f\colon X\ra X$ be a surjective endomorphism of a normal projective variety over a number field $K$ with $\lambda_1(f)>1$. Let $H$ be an integral eigendivisor and let $V_H$ be as we defined previously. We say $f^*\mid_{V_H}$ has a good eigenspace if $f^*\mid_{V_H}$ has the following properties,
	
	\begin{enumerate}
	\item  $\lambda$ is the unique eigenvalue of absolute value $\lambda$ of $f^*\mid_{V_H}$.
	\item The multiplicity of all $\lambda$ Jordan blocks of $f^*\mid_{V_H}$ are of multiplicity 1.
	\item If the Jordan blocks of $f^*\mid_{V_H}$ associated to $\lambda$ can be taken to be integral nef divisor classes $D_1,...,D_l$. 
	\item There is some $1\leq i\leq l$ such that $\kappa(D_i)\neq 0.$ 
	\end{enumerate}

\end{definition} 

We then illustrate how this condition may be used in a number of cases. It is our hope that in the future this condition may be verified for specific families of varieties.

\begin{theorem}
	Let $X$ be a normal projective variety defined over a number field $K$ and $f\colon X\ra X$ a surjective endomorphism with $\lambda_1(f)>1$ that admits a good eigenspace with respect to some integral ample divisor $H$. Then the Kawaguchi-Silverman conjecture holds for $f$ in the following cases. 
	\begin{enumerate}
		\item Let $X$ be a normal projective variety with finitely generated and rational nef cone. Suppose that for every nef divisor $D$ there is an integer $m\geq 1$ such that $mD$ is effective.
		\item 	Let $X$ be a normal $\QQ$-factorial projective variety defined over a number field $K$ with $\rho(X)=2,\ \kappa(X)<0, \Alb(X)=0$. Suppose in addition that $f\colon X\ra X$ be a surjective ramified endomorphism that is not int-amplified. 
		\item Let $X=\PP E$ where $Y$ is a smooth projective variety with Picard number $1$ and $E$ is a nef vector bundle on $Y$ with $H^0(Y,E)\neq 0$ such that $E$ is not ample and $\kappa(\PP E,-K_{\PP E})\geq 0$. 
		\item Let $X=\PP E$ where $E$ is a vector bundle on an elliptic curve $C$ defined over $K$.
	\end{enumerate}
	
 \end{theorem}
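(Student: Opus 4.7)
The common strategy in all four cases is to reduce to the criterion of Matsuzawa--Sano--Shibata recalled in the introduction, namely \cite[Proposition 3.6]{MR4070310}: it suffices to exhibit a nonzero $\QQ$-divisor $D$ with $f^*D\sim_\QQ \lambda_1(f)D$ and $\kappa(D)\geq 1$. The good-eigenspace hypothesis already furnishes integral nef divisors $D_1,\ldots,D_l$ representing the $\lambda_1(f)$-eigenray, and condition $(4)$ guarantees $\kappa(D_i)\neq 0$ for some $i$. Since $\kappa$ takes values in $\{-\infty,0,1,\ldots,\dim X\}$, promoting $\kappa(D_i)\neq 0$ to $\kappa(D_i)\geq 1$ is equivalent to ruling out $\kappa(D_i)=-\infty$, and in each case the case-specific geometric hypotheses are exactly what is needed to do this.

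Case $(1)$ is immediate: the hypothesis that every nef class on $X$ has an effective multiple forces $\kappa(D_i)\geq 0$, which together with $\kappa(D_i)\neq 0$ gives $\kappa(D_i)\geq 1$.

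For case $(2)$, with $\rho(X)=2$ the operator $f^*$ acts on $\Pic(X)_\RR$ as a $2\times 2$ matrix. Since $\lambda_1(f)>1$ and $f$ is not int-amplified, the second eigenvalue $\mu$ must satisfy $|\mu|\leq 1$, so the good-eigenspace datum pins down (up to scalar) a single nef integral class $D$. My plan is to use the uniruledness coming from $\kappa(X)<0$ together with $\Alb(X)=0$ to locate the two extremal rays of the pseudoeffective cone explicitly, and then to interpret the ramification formula $f^*K_X\equiv K_X+R$ (available because $f$ is ramified) as a linear relation in the two-dimensional Picard space; this should place $D$ in the big cone, hence $\kappa(D)=\dim X\geq 1$.

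For case $(3)$, the projective bundle formula gives $\Pic(X)=\ZZ\xi\oplus \pi^*\Pic(Y)$ with $\xi=\OO_{\PP E}(1)$, and $\rho(Y)=1$ yields $\rho(X)=2$. Nefness of $E$ makes $\xi$ nef, non-ampleness places $\xi$ on the boundary of the nef cone, and $H^0(Y,E)\neq 0$ gives $H^0(X,\xi)\neq 0$, so $\kappa(\xi)\geq 0$. The eigenclass $D$ is either proportional to $\xi$, in which case the hypothesis $\kappa(X,-K_X)\geq 0$, combined with the relative Euler sequence, is used to upgrade $\kappa(\xi)\geq 0$ to $\geq 1$, or $D$ lies in the interior of the nef cone and is therefore ample. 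For case $(4)$ I would invoke Atiyah's classification of vector bundles on the elliptic curve $C$ and run through the indecomposable summands of $E$; the eigenray of $\lambda_1(f)$ must be compatible with the Harder--Narasimhan filtration of $E$, and in each of the resulting cases a direct computation of $H^0(C,\sym^m E\otimes L)$ yields $\kappa(D)\geq 1$. I expect the main obstacle to lie in case $(2)$, where the combination of ``ramified'' and ``not int-amplified'' must be leveraged precisely to force an effective multiple of the nef eigendivisor; cases $(3)$ and $(4)$ should reduce to bookkeeping once the structure of $\Pic(X)$ and of $E$ is made explicit.
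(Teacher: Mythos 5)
Your overall framing is right: all four cases reduce to exhibiting a nonzero $\QQ$-divisor $D$ with $f^*D\sim_\QQ\lambda_1(f)D$ and $\kappa(D)>0$, and the role of the case hypotheses is to get $\kappa$ of a nef eigendivisor to be nonnegative. Case (1) matches the paper exactly. But your handling of cases (2)--(4) misconstrues the division of labour between the geometric hypotheses and the good-eigenspace hypothesis: the geometric assumptions are only ever strong enough to yield $\kappa(D)\geq 0$, and it is precisely the good-eigenspace condition ($\kappa(D_i)\neq 0$ for some $i$) that rules out $\kappa=0$. In each of (2), (3), (4) you instead claim that a direct geometric argument forces $\kappa(D)\geq 1$, which would render the good-eigenspace hypothesis vacuous; this is not what happens, and in case (2) the mechanism you propose is actually impossible.

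Concretely, in case (2) your plan is to show the nef eigendivisor $D$ is big. But after replacing $f$ by an iterate, the paper (Proposition \ref{prop:nef=peff}) reduces to $\Nef(X)=\PEff(X)$; on a Picard-number-$2$ variety a big nef class then lies in the interior of $\Nef(X)$, i.e.\ is ample, and $f^*D\sim\lambda D$ with $\lambda>1$ would force $f$ to be int-amplified, contradicting the hypothesis. What the paper actually does is split on the second eigenvalue $\mu$: if $\mu<1$ it invokes Proposition \ref{prop:smalleig1} (a height argument, no good eigenspace needed); if $\mu=1$ it writes $f^*$ in the basis $(K_X,R)$, where $R$ is the ramification divisor (note the sign: $f^*K_X=K_X-R$, not $K_X+R$), and a direct matrix computation shows $R$ itself spans the $\lambda$-eigenspace. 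As $R$ is effective, $\kappa(R)\geq 0$, and now the good-eigenspace hypothesis is what gives $\kappa(R)\neq 0$, hence $\kappa(R)>0$. For case (3), the paper's argument (Theorem \ref{thm:bundlethm1}) splits on the sign of $K_Y$ for the base $Y$, and in the delicate case $K_Y=0$ it works with $-K_{\PP E}$ rather than $\xi=\OO_{\PP E}(1)$, using totally invariant prime divisors (\cite[Prop.~6.1, 9.2]{1908.01605}) to show $-K_{\PP E}$ is an effective $\lambda$-eigendivisor; again only $\kappa\geq 0$ comes out geometrically and the good eigenspace closes the gap. For case (4), the paper first invokes Theorem \ref{thm:JohnMatt} to reduce to $E$ semistable of degree zero (so the Harder--Narasimhan filtration is not what is used), then applies Atiyah's classification via Propositions \ref{prop:symprop} and \ref{prop:antiiitaka} to prove $\kappa(-K_{\PP E})\geq 0$, and then simply reduces to case (3); your claimed direct computation yielding $\kappa\geq 1$ is not achievable and is not what the paper does.
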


\section{Background}

Here we collect the results that we will need in this article and give some basic background information. Given a dominant rational map $f\colon X\dashrightarrow X$ one may attach various numerical measures for the complexity of this action. 

\begin{enumerate}
	\item The first dynamical degree: 
	\[\lambda_1(f):=\lim_{n\ra \infty}((f^n)^*H \cdot H^{\dim X-1})^{\frac{1}{n}}\]
	
	Here $H$ is an ample divisor on $X$. The limit can be shown to exist and be independent of the choice of $H$. 
	\item Given a point $P\in X(\bar{\QQ})$ such that $f^n(P)$ is defined for all $n$ we define the upper arithmetic degree
	
	\[\overline{\alpha_f}(P):=\limsup_{n\ra\infty}h_H^+(f^n(P))^\frac{1}{n}\]
	
	and the lower arithmetic degree
	
	\[\underline{\alpha_f}(P):=\liminf_{n\ra\infty}h_H^+(f^n(P))^\frac{1}{n}\]
	
	where $H$ is an ample divisor, $h_H$ is a choice of height function for $H$, and $h_H^+(P)=\max\{1,h_H(P)\}$. We think of the lower and upper arithmetic degrees as arithmetic measures of the complexity of $f$.
\end{enumerate} 

A related and interesting conjecture to the Kawaguchi-Silverman conjecture is the following.

\begin{conjecture}[{\cite[Conjecture 1.4]{2002.10976}}]\label{conj:sAND}
Let $X$ be a normal projective variety and $f\colon X\ra X$ a surjective endomorphism. Let
\[\McS(X,f,N)=\{P\in X(K):[K,\QQ]\leq N,\alpha_f(P)<\lambda_1(f)\}\]

Then  $\McS(X,f,N)$ is not Zariski dense in $X$.

\end{conjecture}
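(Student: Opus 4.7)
The plan is to combine the canonical height machinery of Kawaguchi--Silverman with Northcott's finiteness theorem and an inductive descent on $\dim X$.

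First, I would use the canonical heights $\hat{h}_{\lambda_i}$ attached to each Jordan block of $f^*$ on $V_H$. A direct comparison of the defining limits shows that any $P \in \McS(X,f,N)$ must satisfy $\hat{h}_{\lambda_i}(P) = 0$ whenever $|\lambda_i| = \lambda_1(f)$; otherwise $h_H(f^n(P))$ would grow at rate at least $\lambda_1(f)^n$ (up to a polynomial factor), forcing $\alpha_f(P) = \lambda_1(f)$ and contradicting $P \in \McS(X,f,N)$. Hence $\McS(X,f,N) \subseteq G_{f,H}$ in the notation of the earlier theorem, and the problem is reduced to controlling $G_{f,H}$.

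Second, I would try to locate $G_{f,H}$ inside a proper $f$-invariant subvariety $Z \subsetneq X$. The natural route is to exploit an eigendivisor $D$ of eigenvalue $\lambda_1(f)$ with $\kappa(D) \geq 1$: the canonical height $\hat{h}_D$ then represents a Weil height for $D$, and its vanishing forces $P$ into the stable base locus of $D$, which is proper. In the opposite case, when $\kappa(D) = 0$ for every dominant eigenclass, the invariance theorem from the introduction supplies a proper subvariety $\textnormal{Bs}(D')$ satisfying $f^{-1}(\textnormal{Bs}(D')) = \textnormal{Bs}(D')$ for each such $D$; one would then hope that the union of these base loci across all dominant eigenclasses already contains $G_{f,H}$.

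Third, once $\McS(X,f,N) \subseteq Z$ for a proper totally $f^{-1}$-invariant subvariety $Z$, I would restrict $f$ to an irreducible component (after iteration) of $Z$ and invoke the inductive hypothesis of sAND on $\dim Z < \dim X$. Standard inequalities give $\lambda_1(f|_Z) \leq \lambda_1(f)$, so the inductive hypothesis indeed applies, and the base case of zero-dimensional $Z$ follows from Northcott's theorem applied to a bounded-degree, bounded-height set.

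The main obstacle is the second step: producing a proper $f^{-1}$-stable subvariety containing $G_{f,H}$ without auxiliary hypotheses. The eigendivisor approach succeeds cleanly when some dominant eigenclass is big, or effective with $\kappa \geq 1$ (the good-eigenspace regime of the paper), but for general $f$ one must simultaneously control every Jordan block of the dominant eigenvalue while $\kappa(D) = 0$ throughout. Converting the invariant base loci of individual $\kappa=0$ eigendivisors into a single proper subvariety containing $G_{f,H}$ appears to require either an abundance-type statement for dynamical eigenclasses or a new equivariant geometric construction, and it is for this reason that sAND remains open.
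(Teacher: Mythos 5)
This statement is a \emph{conjecture}, not a theorem: the paper cites it as Conjecture~1.4 of Matsuzawa--Meng--Shibata--Zhang and never claims a proof. No complete proof exists in the literature, so you should not expect to find one by comparison. To your credit, your write-up is honest about this --- your final paragraph explicitly concedes that the second step cannot be carried out without auxiliary hypotheses and that this is why sAND remains open.

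Your first two steps do track the conditional results the paper actually establishes. The inclusion $\McS(X,f,N)\subseteq G_{f,H}$ is precisely the content of Proposition~\ref{prop:altsAND} combined with Theorem~\ref{theorem:KS2}: nonvanishing of some $\hat h_{D_i}$ with $|\lambda_i|=\lambda_1(f)$ forces $\alpha_f(P)=\lambda_1(f)$. And your second step, in the regime where a dominant eigendivisor $D$ has $\kappa(D)\geq 1$, is exactly Proposition~\ref{prop:LargeJordanBlock} (via \cite[Proposition~3.5]{MR4070310}); when $\kappa(D)=0$ the paper's Corollary~\ref{cor:invcor} gives the totally invariant base locus you invoke. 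What the paper does \emph{not} do, and what you correctly flag as the obstruction, is convert those per-eigendivisor invariant base loci into a single proper subvariety containing all of $G_{f,H}$ in the absence of a good-eigenspace hypothesis; that is exactly the gap the paper's Definition~\ref{def:Jordan Block} is designed to paper over by assumption. Your proposed third step (inductive descent on a totally invariant $Z\subsetneq X$ plus Northcott for the base case) is a plausible reduction but is not carried out in the paper and is itself nontrivial --- one has to worry about whether $\lambda_1(f|_Z)$ can drop below $\lambda_1(f)$, in which case the inductive hypothesis alone does not constrain $\alpha_f$ on $Z$. In summary, you have correctly reproduced the strategic landscape and correctly identified the open step; just be explicit that you are sketching a strategy for an open problem rather than proposing a proof.
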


An easy argument makes it clear that the sAND conjecture implies the Kawaguchi-Silverman conjecture. Moreover, the sAND is potentially interesting for varieties $X$ with $\kappa(X)>0$ where the Kawaguchi-Silverman conjecture is not; if $\kappa(X)>0$ then we cannot have have dense orbits.

The Kawaguchi-Silverman conjecture is almost completely open when the rational map is not a morphism. For example the question is still open for $\PP^2$. By contrast when the map is a morphism then it is known that the limit $\lim_{n\ra\infty}h_H^+(f^n(P))^\frac{1}{n}$ exists. We call the limit the arithmetic degree, which is denoted $\lim_{n\ra\infty}h_H^+(f^n(P))^\frac{1}{n}:=\alpha_f(P)$. In the setting of morphisms the conjecture is known in a much wider variety of cases. For example the conjecture is known in the following cases.

\begin{enumerate}
	\item Varieties with Picard number 1. (\cite{MR4080251})
	\item Abelian varieties. (\cite{MR4068299,MR3614521})
    \item Smooth projective surfaces. (\cite{MR3871505})
	\item Rationally connected varieties admitting an int-amplified endomorphism. (\cite{1908.11537})
	\item Hyper-Kahler Varieties.(\cite{1802.07388})
\end{enumerate}
See (\cite{MR3871505}) for further comments on what is known. In the morphism case a powerful tool is the following connection to linear algebra. The morphism $f\colon X\ra X$ induces by pull back linear isomorphisms $f^*\colon\Pic(X)_\RR\ra\Pic(X)_\RR$ and $f^*\colon N^1(X)_\RR\ra N^1(X)_\RR$. One may profitably study the dynamics of $f$ through the dynamics of these actions. The connection is follows, \[\lambda_1(f)=\textnormal{Spectral Radius}(f^*\colon N^1(X)_\RR\ra N^1(X)_\RR)=\{\max\mid \lambda\mid:\lambda \textnormal{ an eigenvalue of } f^*\}\}\] If $\lambda>1$ is an eigenvalue of $f^*$ then one uses the divisor classes with $f^*D\sim_\QQ\lambda D$ to construct \emph{height functions}. The main results are as follows.

\begin{theorem}[{\cite[Theorem 5]{MR4080251}},{\cite[Theorem 1.1]{MR1255693}}]\label{thm:canheight1}
Let $X$ be a normal projective variety and $f\colon X\ra X$ a surjective endomorphism. Let $D\in \textnormal{Div}_\RR(X)$ and suppose that $f^*D\equiv_{\textnormal{Num}}\lambda D$ for some $\lambda>\sqrt{\lambda_1(f)}$. Then 
\begin{enumerate}
	\item For all $P\in \bar{\QQ}$ the limit $\hat{h}_D(P)=\lim_{n\ra\infty }\dfrac{h_D(f^n(P))}{\lambda^n}$ exists for any choice of height function $h_D$ associated to $D$.
	\item We have that $\hat{h}_D(f^n(P))=\lambda\hat{h}_D(P)$ and that $\hat{h}_D=h_D+O(\sqrt{h^+_X}\ )$ where $h_X$ is any ample height on $X$ and $h_X^+=\max\{1,h_X\}$. If $f^*D\sim_\QQ \lambda D$ then $\hat{h}_D=h_D+O(1).$
	\item If $\hat{h}_D(P)\neq 0$ then $\alpha_f(P)\geq \lambda$.
	\item If $\lambda=\lambda_1(f)$ and $\hat{h}_D(P)\neq 0$ then $\alpha_f(P)=\lambda_1(f)$. 
	\item If we are working over a number field and $D$ is ample then $\hat{h}_D(P)=0\iff P$ is pre-periodic for $f$.   
\end{enumerate}
\end{theorem}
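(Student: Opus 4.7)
This result is the classical Call--Silverman construction of canonical heights, adapted to the weaker hypothesis of numerical (rather than linear) equivalence; the threshold $\lambda > \sqrt{\lambda_1(f)}$ reflects the square-root loss in the height machine incurred by this weakening. The plan is to prove all five parts by running a single Tate-style telescoping argument and then harvesting consequences.

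For (1) and (2), the height machine applied to $f^*D \equiv_{\textnormal{Num}} \lambda D$ yields
\[
h_D(f(P)) = \lambda\, h_D(P) + O\!\bigl(\sqrt{h_X^+(P)}\bigr),
\]
with the error improved to $O(1)$ when $f^*D \sim_{\QQ} \lambda D$. A standard comparison of $(f^n)^*H$ against $H$ in $N^1(X)_\RR$, using that $\lambda_1(f)$ is the spectral radius of $f^*$, supplies for every $\epsilon > 0$ the a priori bound
\[
h_X^+(f^n(P)) \ll_{\epsilon,P} \bigl(\lambda_1(f) + \epsilon\bigr)^n.
\]
Feeding this into the telescoping differences gives
\[
\left| \tfrac{h_D(f^{n+1}(P))}{\lambda^{n+1}} - \tfrac{h_D(f^n(P))}{\lambda^n} \right| \;\ll_{\epsilon,P}\; \tfrac{(\lambda_1(f)+\epsilon)^{n/2}}{\lambda^{n+1}}.
\]
Since $\lambda > \sqrt{\lambda_1(f)}$, one may choose $\epsilon$ so that $\sqrt{\lambda_1(f)+\epsilon} < \lambda$, making the right-hand side a convergent geometric series. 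This proves existence of $\hat{h}_D(P)$, and summing the tail yields $\hat{h}_D = h_D + O(\sqrt{h_X^+})$ in general and $\hat{h}_D = h_D + O(1)$ under $\sim_{\QQ}$. The functional equation $\hat{h}_D(f(P)) = \lambda \hat{h}_D(P)$ is immediate from the defining limit by shifting the index. The main obstacle is exactly this series convergence: the hypothesis $\lambda > \sqrt{\lambda_1(f)}$ is the tight threshold at which the telescoping closes up.

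For (3), if $\hat{h}_D(P) \neq 0$ then the comparison in (2) reads
\[
h_D(f^n(P)) = \lambda^n \hat{h}_D(P) + O\!\bigl(\sqrt{h_X^+(f^n(P))}\bigr).
\]
Because any height is bounded between multiples of an ample height up to constants, $|h_D(Q)| \ll h_X^+(Q) + 1$; rearranging gives a quadratic inequality in $\sqrt{h_X^+(f^n(P))}$ whose solution yields $h_X^+(f^n(P)) \gg \lambda^n |\hat{h}_D(P)|$. Taking $n$-th roots shows $\alpha_f(P) \geq \lambda$. Part (4) then follows by combining (3) with the matching upper bound $\alpha_f(P) \leq \lambda_1(f)$ of Kawaguchi--Silverman for morphisms.

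For (5), with $D$ ample we may take $h_X = h_D$, so the comparison in (2) reads $\hat{h}_D = h_D + O(\sqrt{h_D^+})$, which is enough to guarantee that $\hat{h}_D$ inherits the Northcott finiteness property on each bounded-degree extension. If $\hat{h}_D(P) = 0$, the functional equation forces $\hat{h}_D(f^n(P)) = 0$ for all $n$, hence $h_D(f^n(P))$ is uniformly bounded; since every $f^n(P)$ lies in the fixed number field $K(P)$, Northcott makes the orbit finite, so $P$ is preperiodic. Conversely, if $P$ is preperiodic then $\hat{h}_D$ takes finitely many values on the orbit but is scaled by $\lambda^n$ with $\lambda > 1$, forcing $\hat{h}_D(P) = 0$.
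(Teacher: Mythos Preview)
The paper does not supply its own proof of this theorem; it is quoted as background from the cited references of Call--Silverman and Kawaguchi--Silverman, and all that appears in the paper is the statement. Your sketch follows exactly the standard argument from those sources: the Tate telescoping sum, convergence controlled by the a priori orbit growth bound $h_X^+(f^n(P)) \le C_\epsilon(\lambda_1(f)+\epsilon)^n h_X^+(P)$, and then the usual Northcott argument for part (5). So there is nothing to compare against, and your approach is the expected one.

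One small point worth tightening: in your estimate for (1) and (2) you write $h_X^+(f^n(P)) \ll_{\epsilon,P} (\lambda_1(f)+\epsilon)^n$, with the implied constant depending on $P$. That suffices for pointwise convergence of the limit, but to extract the uniform error term $\hat{h}_D = h_D + O(\sqrt{h_X^+})$ you need the stronger uniform statement $h_X^+(f^n(P)) \le C_\epsilon(\lambda_1(f)+\epsilon)^n h_X^+(P)$ with $C_\epsilon$ independent of $P$; this is what Kawaguchi--Silverman actually prove, and it is what makes the summed tail bounded by a constant times $\sqrt{h_X^+(P)}$ rather than by a $P$-dependent constant. With that adjustment your argument is complete.
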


We will need the following generalization to Jordan blocks, however in certain cases (\ref{thm:canheight1}) is sufficient and is the prototypical result.

\begin{theorem}[{\cite[Theorem 13]{MR4068299}}]\label{thm:canheight2}
Let $X$ be a normal projective variety over $\bar{\QQ}$ and let $f\colon X\ra X$ be a surjective endomorphism. Let $\lambda\in \CC$ with $\mid \lambda\mid >1$. Let $D_0,...,D_p\in \textnormal{Div}(X)_\CC$ with $f^*D_0\sim\lambda D_0$ and for $i\geq 1$ we have $f^*D_i\sim \lambda D_i+D_{i-1}$. We say that the $D_i$ are in Jordan block form. For each $D_i$ choose a Weil height $h_{D_i}$. Then we have the following.

\begin{enumerate}
	\item For each $i$ there are canonical height functions $\hat{h}_{D_i}\colon X(\CC)\ra \CC$ such that $\hat{h}_{D_i}=h_{D_i}+O(1)$ and $\hat{h}_{D_i}\circ f=\lambda \hat{h}_{D_i}+\hat{h}_{D_{i-1}}$ where we set $\hat{h}_{D_{-1}}=0$. 
	\item We have the following recursive formula.
	
	\[\hat{h}_{D_k}(x)=\lim_{n\ra\infty }(\lambda^{-n}h_{D_k}(f^n(x))-\sum_{i=1}^k\binom{n}{i}\lambda^{-i}\hat{h}_{k-i}(x)).\]
\end{enumerate} 
\end{theorem}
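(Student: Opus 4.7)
The proof proceeds by induction on the index $k$, using a Tate-style telescoping construction adapted to Jordan blocks. For the base case $k=0$, the relation $f^*D_0\sim\lambda D_0$ yields $h_{D_0}\circ f = \lambda h_{D_0} + O(1)$ uniformly on $X(\CC)$, so the classical Call--Silverman argument (valid since $\mid\lambda\mid>1$) produces $\hat{h}_{D_0}(x)=\lim_{n\ra\infty}\lambda^{-n}h_{D_0}(f^n(x))$ with geometric error rate $O(\mid\lambda\mid^{-n})$ uniform in $x$.

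For the inductive step, assume that $\hat{h}_{D_0},\dots,\hat{h}_{D_{k-1}}$ have been constructed satisfying both stated properties together with the uniform exponential remainder
\[
\phi_n^{(j)}(x)-\hat{h}_{D_j}(x) = O(\mid\lambda\mid^{-n}),\qquad
\phi_n^{(j)}(x) := \lambda^{-n}h_{D_j}(f^n(x)) - \sum_{i=1}^{j}\binom{n}{i}\lambda^{-i}\hat{h}_{D_{j-i}}(x).
\]
Define $\hat{h}_{D_k}(x):=\lim_{n\ra\infty}\phi_n^{(k)}(x)$, matching the candidate limit in (2). To prove convergence, use $h_{D_k}\circ f = \lambda h_{D_k}+h_{D_{k-1}}+O(1)$ (coming from $f^*D_k\sim\lambda D_k+D_{k-1}$) together with Pascal's identity $\binom{n}{i}-\binom{n-1}{i}=\binom{n-1}{i-1}$ to expand $\phi_n^{(k)}-\phi_{n-1}^{(k)}$. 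After re-indexing $i\mapsto i+1$ in the shifted sum and isolating the boundary term $-\lambda^{-1}\hat{h}_{D_{k-1}}(x)$, the computation collapses to
\[
\phi_n^{(k)}(x)-\phi_{n-1}^{(k)}(x) = \lambda^{-1}\bigl[\phi_{n-1}^{(k-1)}(x)-\hat{h}_{D_{k-1}}(x)\bigr] + O(\mid\lambda\mid^{-n}).
\]
By the inductive hypothesis the bracketed quantity is $O(\mid\lambda\mid^{-n})$, so the telescoping series converges geometrically and we obtain $\phi_n^{(k)}-\hat{h}_{D_k}=O(\mid\lambda\mid^{-n})$, preserving the strengthened inductive hypothesis.

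The three claims now follow. The recursive formula (2) is just the definition of $\hat{h}_{D_k}$. The approximation $\hat{h}_{D_k}=h_{D_k}+O(1)$ follows from $\phi_0^{(k)}=h_{D_k}$ (the subtracted sum is empty at $n=0$) and summing the geometric increments. For the functional equation $\hat{h}_{D_k}\circ f = \lambda\hat{h}_{D_k}+\hat{h}_{D_{k-1}}$, compute $\phi_n^{(k)}(f(x))$ by rewriting $\lambda^{-n}h_{D_k}(f^{n+1}(x)) = \lambda\cdot\lambda^{-(n+1)}h_{D_k}(f^{n+1}(x))$, apply the inductive functional equations $\hat{h}_{D_{k-i}}\circ f = \lambda\hat{h}_{D_{k-i}}+\hat{h}_{D_{k-i-1}}$ to the lower-index terms in the Jordan correction, re-index using $\binom{n+1}{i}=\binom{n}{i}+\binom{n}{i-1}$, and pass to the limit. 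The main technical obstacle is the polynomial-in-$n$ growth of the Jordan correction factors $\binom{n}{i}\sim n^i$: the raw sequence $\lambda^{-n}h_{D_k}(f^n(x))$ generally diverges polynomially in $n$, so subtracting precisely the Jordan polynomial correction $\sum_{i=1}^{k}\binom{n}{i}\lambda^{-i}\hat{h}_{D_{k-i}}(x)$ is essential; verifying via Pascal's identity that this subtraction cancels the polynomial divergence and reduces the recursion to a single exponentially controlled one-step problem is the heart of the argument.
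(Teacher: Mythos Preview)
The paper does not prove this theorem at all: it is quoted verbatim as a background result from Kawaguchi--Silverman \cite[Theorem 13]{MR4068299} and then used as a black box. So there is no ``paper's own proof'' to compare against.

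That said, your argument is correct and is essentially the standard proof one finds in the original source. The inductive telescoping with the Pascal identity is exactly how the Jordan correction $\sum_{i=1}^{k}\binom{n}{i}\lambda^{-i}\hat{h}_{D_{k-i}}$ is shown to absorb the polynomial blow-up, and your strengthened inductive hypothesis $\phi_n^{(j)}-\hat{h}_{D_j}=O(\lvert\lambda\rvert^{-n})$ is the right bookkeeping device (the implied constant grows with $j$, but that is harmless). Your verification of the functional equation via $\phi_n^{(k)}(f(x))=\lambda\,\phi_{n+1}^{(k)}(x)+\hat{h}_{D_{k-1}}(x)$ is also the clean way to do it. One small cosmetic point: the domain should really be $X(\bar{\QQ})$ rather than $X(\CC)$, since Weil heights are only defined on algebraic points; the statement as written in the paper has the same slip.
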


To use the above result we follow the ideas of Kawaguchi and Silverman in (\cite[Section 4]{MR4068299}). We will often use the following notation.

\begin{notation}\label{not:VH}
Choose an ample divisor $H\in\textnormal{Div}(X)$.

\begin{enumerate}
	\item We define $V_H$ to be the vector space spanned by $(f^n)^*H$ for all $n\geq 0$. By (\cite{MR4068299}) this is a finite dimensional space.
	\item Notice that by construction we have a linear mapping $f^*\colon V_H\ra V_H$. We will be interested in the eigenvalues of this linear mapping.
	\item Let $\lambda_1,..., \lambda_\sigma,\mu_{\sigma+1},...,\mu_d$ be the eigenvalues of $f^*\mid_{V_H}$ ordered such that 
	
	\[\mid \lambda_1\mid\geq \mid \lambda_2\mid\geq ...\geq \mid\lambda_\sigma\mid>1\geq \mid \mu_{\sigma+1}\mid\geq...\geq \mid \mu_d\mid.\]
We define $l=l_H$ to be the number of eigenvectors $\lambda_i$ such that $\mid\lambda_i\mid =\lambda_1(f)$. In particular we have that
\[\mid \lambda_i\mid =\lambda_1(f)\iff 1\leq i\leq l.\] 	
\item By possibly extending scalars we can find a Jordan form for $f^*$ on $V_H$ which means we find divisors $D_1,...,D_p$ with $f^*D_i\sim\lambda_i D_i$ or $f^*D_i\sim \lambda_i D_i+D_{i-1}$. We say the choice of divisors $D_i$ is a Jordan form for $f^*$.
	
\end{enumerate}     
\end{notation} 
With this notation we have the following key result.  

\begin{theorem}[{\cite[Section 4]{MR4068299}}]\label{theorem:KS2} With the above notation in place let $P\in X(\bar{\QQ})$. 

\begin{enumerate}
	\item Then $\alpha_f(P)=1$ or $\alpha_f(P)=\mid \lambda_i\mid$. More precisely suppose that $\hat{h}_{D_i}(P)\neq 0$ for some $1\leq i\leq \sigma$. Let $k$ be the smallest index with $\hat{h}_{D_k}(P)\neq 0$. Then $\alpha_f(P)=\mid \lambda_k\mid$. On the other hand if $\hat{h}_{D_k}(P)=0$ for all $1\leq k\leq \sigma$ then $\alpha_f(P)=1$. 
	\item In particular if $\mid \lambda_i\mid =\lambda_1(f)$ for $i=1,...,l$ and $\mid\lambda_{l+1}\mid <\lambda_1(f)$ then $\alpha_f(P)=\lambda_1(f)\iff \hat{h}_{D_i}(P)\neq 0$ for some $1\leq i\leq \sigma$  
\end{enumerate} 	
\end{theorem}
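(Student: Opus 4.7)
The strategy is to iterate the functional equation $\hat{h}_{D_i}\circ f = \lambda_i\hat{h}_{D_i}+\hat{h}_{D_{i-1}}$ of Theorem~\ref{thm:canheight2} to obtain a closed form for $h_H(f^n(P))$ and then read off its exponential growth rate. First, by induction on $n$ along each Jordan chain $D_0^{(\lambda)},\ldots,D_{p_\lambda}^{(\lambda)}$ of eigenvalue $\lambda$, I would establish
\[\hat{h}_{D_i^{(\lambda)}}(f^n(P)) = \sum_{j=0}^{i}\binom{n}{j}\lambda^{n-j}\hat{h}_{D_{i-j}^{(\lambda)}}(P).\]
Because $V_H$ is $f^*$-cyclic by construction (generated by $H$ under $f^*$-iteration), writing $H=\sum_{\lambda,i}c_i^{(\lambda)}D_i^{(\lambda)}$ in $V_H\otimes\CC$ forces the coefficient $c_{p_\lambda}^{(\lambda)}$ at the top of every Jordan block appearing in $V_H$ to be nonzero, since otherwise $H$ would generate a proper $f^*$-invariant subspace. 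Using $\hat{h}_{D_i}=h_{D_i}+O(1)$, $\CC$-linear extension of heights, and reindexing $i'=i-j$, I then obtain
\[h_H(f^n(P)) = \sum_\lambda \lambda^n\sum_{i'=0}^{p_\lambda}\hat{h}_{D_{i'}^{(\lambda)}}(P)\,Q_{\lambda,i'}(n) + O(1),\]
where each $Q_{\lambda,i'}(n)$ is a polynomial in $n$ of degree $p_\lambda-i'$ whose leading coefficient is a nonzero scalar multiple of $c_{p_\lambda}^{(\lambda)}$.

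For part (1), let $k$ be the smallest index with $\hat{h}_{D_k}(P)\neq 0$. Because smaller index corresponds to larger eigenvalue magnitude, the Jordan block containing $D_k$ has the maximum modulus $|\lambda_k|$ among blocks with any nonvanishing canonical height at $P$. The displayed expansion is dominated by the contribution of those blocks with $|\lambda|=|\lambda_k|$, so $|h_H(f^n(P))|=O(n^C|\lambda_k|^n)$ for some constant $C$, giving the upper bound $\alpha_f(P)\leq|\lambda_k|$. For the reverse inequality, choose an integral ample $H'$ and $m\gg 0$ such that the four divisors $mH'\pm\mathrm{Re}(D_k)$ and $mH'\pm\mathrm{Im}(D_k)$ are all effective; this yields $|h_{D_k}(Q)|\leq 2m\,h_{H'}(Q)+O(1)$ pointwise. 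Minimality of $k$ kills every lower-index term in the closed form, leaving $h_{D_k}(f^n(P))=\lambda_k^n\hat{h}_{D_k}(P)+O(1)$, whose modulus grows like $|\hat{h}_{D_k}(P)||\lambda_k|^n$. Hence $h_{H'}(f^n(P))\geq c|\lambda_k|^n-O(1)$ and $\alpha_f(P)\geq|\lambda_k|$, giving equality. In the residual case $\hat{h}_{D_k}(P)=0$ for all $1\leq k\leq\sigma$, only blocks with $|\mu|\leq 1$ contribute, so $h_H(f^n(P))$ is bounded by a polynomial in $n$; combined with the trivial lower bound $\alpha_f(P)\geq 1$ coming from $h_H^+\geq 1$, this forces $\alpha_f(P)=1$.

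Part (2) is then an immediate corollary: under the hypothesis $|\lambda_1|=\cdots=|\lambda_l|=\lambda_1(f)>|\lambda_{l+1}|$, part (1) says $\alpha_f(P)=\lambda_1(f)$ if and only if the minimal index $k$ with $\hat{h}_{D_k}(P)\neq 0$ satisfies $k\leq l$, which gives the asserted equivalence. The main technical obstacle is the careful treatment of complex Jordan blocks: the $D_i$ naturally live in $V_H\otimes\CC$ with complex-valued $\hat{h}_{D_i}$, so one must verify that contributions of conjugate-pair blocks assemble into real quantities of the form $($polynomial in $n)\cdot|\lambda|^n\cdot($bounded oscillatory factor$)$, and that the cyclicity argument producing $c_{p_\lambda}^{(\lambda)}\neq 0$ carries over after extension of scalars. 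Crucially, the modulus $|h_{D_k}|$ used in the lower bound avoids any oscillation, so the control obtained is on $\liminf$ rather than only $\limsup$, which is what is needed to identify $\alpha_f(P)$ as a genuine limit.
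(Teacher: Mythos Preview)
The paper does not supply a proof of this statement; it is quoted as background from \cite[Section~4]{MR4068299}. Your outline follows the same strategy as Kawaguchi and Silverman's original argument, and the lower bound is correct: minimality of $k$ collapses the Jordan recursion to $\hat h_{D_k}(f^n(P))=\lambda_k^{\,n}\hat h_{D_k}(P)$, and the comparison $|h_{D_k}|\le 2m\,h_{H'}+O(1)$ via effectivity of $mH'\pm\mathrm{Re}(D_k),\,mH'\pm\mathrm{Im}(D_k)$ then forces $\alpha_f(P)\ge|\lambda_k|$.

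There is, however, a genuine gap in your upper bound. The displayed identity
\[
h_H(f^n(P))=\sum_\lambda \lambda^n\sum_{i'}\hat h_{D_{i'}^{(\lambda)}}(P)\,Q_{\lambda,i'}(n)+O(1)
\]
can only run over eigenvalues with $|\lambda|>1$, since the canonical heights of Theorem~\ref{thm:canheight2} are not defined for $|\lambda|\le 1$. The contribution of the small-eigenvalue blocks is therefore not absorbed into your $O(1)$; it is a term $\sum_{i>\sigma} c_i\,h_{D_i}(f^n(P))$ that must be controlled separately. You later assert that ``$h_H(f^n(P))$ is bounded by a polynomial in $n$'' when only small blocks remain, but never justify this. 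The fix is to iterate the functoriality relation $h_{D_i}\circ f=\lambda_i h_{D_i}+h_{D_{i-1}}+O(1)$ directly (this holds for any fixed choice of Weil heights, with an $O(1)$ independent of the point): for $|\lambda_i|\le 1$ one obtains $|h_{D_i}(f^n(P))|=O(n^{d})$ by induction up the chain. With this polynomial bound in hand, both the upper bound $\alpha_f(P)\le|\lambda_k|$ (since $|\lambda_k|>1$ dominates any polynomial) and the residual case $\alpha_f(P)=1$ go through as you describe.

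A minor remark: the cyclicity argument giving $c_{p_\lambda}^{(\lambda)}\ne 0$ is correct but unnecessary. You use $h_H$ only for the upper bound, where nonvanishing of leading coefficients is irrelevant, and for the lower bound you work with $h_{D_k}$ directly, so no cancellation in $h_H$ needs to be ruled out.
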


We will also heavily make use of the fact that the set of points in $X$ with $\alpha_f(P)=\lambda_1(f)$ is Zariski dense. 
\begin{theorem}[{\cite[Theorem 1.8]{2007.15180}}]\label{theorem:MSS} Let $X$ be a projective variety defined over $\bar{\QQ}$ and $f\colon X\ra X$ be a surjective endomorphism with $\lambda_1(f)>1$. Then the set of points $P\in X(\bar{\QQ})$ with $\alpha_f(P)=\lambda_1(f)$ is Zariski dense.
\end{theorem}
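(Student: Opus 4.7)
The theorem is the Matsuzawa--Sano--Shibata density result. The plan is to construct a Perron--Frobenius nef eigendivisor, form its canonical height, and combine a backward-orbit argument with a non-vanishing lemma to produce many points of maximal arithmetic degree.

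First I would apply a Perron--Frobenius type theorem to $f^{*}$ acting on the nef cone $\Nef(X)\subset N^{1}(X)_{\RR}$. Since $\Nef(X)$ is a closed full-dimensional strict convex cone preserved by $f^{*}$, and $\lambda_{1}(f)>1$ is the spectral radius, this produces a nonzero nef class $D$ with $f^{*} D\equiv_{\textnormal{num}} \lambda_{1}(f)\cdot D$. Because $\lambda_{1}(f)>1$ forces $\lambda_{1}(f)>\sqrt{\lambda_{1}(f)}$, Theorem~\ref{thm:canheight1} yields a canonical height $\hat{h}_{D}$ with $\hat{h}_{D}\circ f=\lambda_{1}(f)\hat{h}_{D}$ and $\hat{h}_{D}=h_{D}+O(\sqrt{h_{X}^{+}}\,)$.

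Next I would observe that the vanishing locus $Z:=\{P\in X(\bar{\QQ}):\hat{h}_{D}(P)=0\}$ is totally $f$-invariant. If one can exhibit a single point $P_{0}\in X(\bar{\QQ})$ with $\hat{h}_{D}(P_{0})\ne 0$, then for any $Q$ in the backward orbit $\bigcup_{n\geq 0}(f^{n})^{-1}(P_{0})$ we get $\hat{h}_{D}(Q)=\lambda_{1}(f)^{-n}\hat{h}_{D}(P_{0})\ne 0$, so Theorem~\ref{theorem:KS2} (since $D$ lies in the top-eigenvalue part of the Jordan decomposition of $f^{*}$) yields $\alpha_{f}(Q)=\lambda_{1}(f)$. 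Because the full backward orbit of a sufficiently generic $P_{0}$ under a surjective finite self-morphism of a projective variety is Zariski dense, this completes the density statement.

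The main obstacle is producing the non-vanishing point $P_{0}$, because the Perron--Frobenius class $D$ is only guaranteed to be nef, not big. When $D$ happens to be big, a Northcott-type bound forces $h_{D}$, hence $\hat{h}_{D}$, to be unbounded on $X(\bar{\QQ})$, and $P_{0}$ exists trivially. In general one must decompose an ample class $H$ along the Jordan basis of $f^{*}\mid_{V_{H}}$ (see Notation~\ref{not:VH}) via Theorem~\ref{thm:canheight2}, and compare the intersection-theoretic growth $((f^{n})^{*}H\cdot H^{\dim X-1})\asymp \lambda_{1}(f)^{n}$, which holds by the very definition of $\lambda_{1}(f)$, against the arithmetic bounds $h_{H}(f^{n}(P))=o(\lambda_{1}(f)^{n})$ that would be forced by $\hat{h}_{D_{i}}(P)=0$ for all top-eigenvalue indices $i$ and all $P$ in a hypothetical Zariski-dense subset. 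Turning this geometric-versus-arithmetic mismatch into an honest contradiction, via an equidistribution/pigeonhole argument on iterated preimages of a generic point (the technical heart of Matsuzawa--Sano--Shibata), is the step I expect to require the most care.
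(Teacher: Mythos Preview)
The paper does not supply its own proof of Theorem~\ref{theorem:MSS}; it is quoted from \cite{2007.15180} as a background result and then invoked as a black box (for instance in Proposition~\ref{prop:eigenvaluesize} and Proposition~\ref{prop:densestatement}). So there is nothing in the present paper to compare your sketch against.

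On the substance of your outline: the overall shape is right --- Perron--Frobenius nef eigenclass, canonical height via Theorem~\ref{thm:canheight1}, and the identification of the non-vanishing step as the crux. But there is a genuine circularity in the backward-orbit reduction. You need a point $P_{0}$ that simultaneously satisfies $\hat{h}_{D}(P_{0})\neq 0$ \emph{and} is ``sufficiently generic'' for its backward orbit to be Zariski dense. If the non-vanishing locus $\{\hat{h}_{D}\neq 0\}$ were contained in a proper closed subset --- exactly the scenario you must exclude --- then no such $P_{0}$ would be generic, and the backward-orbit argument never gets off the ground. Conversely, once your equidistribution/pigeonhole step actually produces non-vanishing points inside every nonempty Zariski-open set, the density conclusion is immediate and the backward-orbit detour is superfluous.

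So the real content is entirely in the last paragraph of your proposal, and the argument there needs to be stated as: for every proper closed $Z\subsetneq X$ there exists $P\notin Z$ with $\hat{h}_{D_{i}}(P)\neq 0$ for some top index $i$. That is the form in which the Matsuzawa--Sano--Shibata argument is organised; the intermediate backward-orbit step you propose does not buy anything and, as written, hides the circularity just described.
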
	

We often will use the following fundamental theorem.

\begin{theorem}[{\cite[Corollary 27]{MR4080251}}{\cite[Theorem 1.4]{1606.00598}}]\label{thm:fundineq}
Let $X$ be a normal projective variety defined over a number field $K$ and $f\colon X\ra X$ a surjective endomorphism. If $P\in X(\bar{K})$ then $\alpha_f(P)\leq \lambda_1(f)$.

\end{theorem}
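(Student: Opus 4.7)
The plan is to show $\alpha_f(P) \leq \lambda_1(f) + \epsilon$ for every $\epsilon > 0$ and then let $\epsilon \to 0$. The engine will be Gelfand's spectral radius formula applied to $f^*|_{V_H}$, coupled with the height comparison supplied by the Weil height machine for ample classes.

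Fix an ample divisor $H$ with an ample Weil height $h_H$, so that $\alpha_f(P) = \lim_n h_H^+(f^n(P))^{1/n}$; this limit exists because $f$ is a morphism. By \cite{MR4068299} the space $V_H$ of Notation \ref{not:VH} is finite-dimensional and $f^*$-invariant, and the spectral radius of $f^*|_{V_H}$ is at most $\lambda_1(f)$ since $V_H \subseteq N^1(X)_\RR$. Fix any norm $\|\cdot\|$ on $V_H$. Because $H$ lies in the interior of the intersection of $\Nef(X)$ with $V_H$, there exists $K>0$ such that $K\|D\| H - D$ is ample for every nonzero $D \in V_H$.

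Given $\epsilon>0$, Gelfand's formula supplies an integer $N\geq 1$ with $\|(f^N)^*|_{V_H}\|_{\mathrm{op}} \leq (\lambda_1(f) + \epsilon)^N$, so $\|(f^N)^* H\| \leq (\lambda_1(f) + \epsilon)^N \|H\|$. Hence $M(\lambda_1(f) + \epsilon)^N H - (f^N)^* H$ is ample, where $M := K\|H\|$ does not depend on $N$. Ample classes have heights bounded below, so combining this with the functorial identity $h_H \circ f^N = h_{(f^N)^* H} + O(1)$ produces a constant $C_N$ (depending on $N$, not on $P$) such that
\begin{equation*}
h_H(f^N(P)) \;\leq\; M(\lambda_1(f) + \epsilon)^N\, h_H^+(P) + C_N
\end{equation*}
for every $P \in X(\bar{K})$.

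Iterating this estimate at multiples of $N$ yields $h_H^+(f^{kN}(P)) \leq \bigl(M(\lambda_1(f) + \epsilon)^N\bigr)^k \bigl(h_H^+(P) + C_N'\bigr)$ for some constant $C_N'$. Intermediate indices $n = kN + r$ are absorbed into the one-step inequality $h_H^+(f(Q)) \leq C_1 h_H^+(Q) + C_1$ (obtained by picking $C_1$ large enough that $C_1 H - f^* H$ is ample), and taking the $n$-th root as $n \to \infty$ gives
\begin{equation*}
\limsup_{n\to\infty} h_H^+(f^n(P))^{1/n} \;\leq\; M^{1/N}(\lambda_1(f) + \epsilon).
\end{equation*}
Since $M$ is independent of $N$, letting $N \to \infty$ sends $M^{1/N} \to 1$, and then letting $\epsilon \to 0$ delivers $\alpha_f(P) \leq \lambda_1(f)$. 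The main obstacle is precisely ensuring that the multiplicative constant $M$ is chosen once and for all, independently of $N$: this rests on the uniform thickness of the ample cone at $H$ within $V_H$, which is what lets the factor $M^{1/N}$ disappear in the limit while only the additive constants $C_N$ depend on $N$.
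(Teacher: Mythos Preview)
The paper does not give its own proof of this statement; it is quoted from \cite{MR4080251} and \cite{1606.00598} as a known result. Your argument follows the standard strategy of those sources---Gelfand's formula on a finite-dimensional $f^*$-invariant subspace, combined with positivity of heights on the ample cone---and the iteration with the $N$-independent constant $M$ is carried out correctly.

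There is one point that needs more care. You justify the bound on the spectral radius of $f^*|_{V_H}$ by asserting $V_H \subseteq N^1(X)_\RR$, but in this paper's conventions (Notation~\ref{not:VH}, item~4) the Jordan form is taken with respect to \emph{linear} equivalence, so $V_H$ sits in $\Pic(X)_\RR$ rather than $N^1(X)_\RR$. The discrepancy is $\Pic^0(X)_\RR$, and a priori $f^*$ could have eigenvalues there of modulus exceeding $\lambda_1(f)$; your parenthetical does not rule this out. The cleanest fix is to run the whole argument in $N^1(X)_\RR$ instead: the spectral radius there is $\lambda_1(f)$ by definition, and the only cost is that height functions are determined by numerical classes only up to $O(\sqrt{h_H^+})$ rather than $O(1)$. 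These submultiplicative error terms are harmless after taking $n$-th roots, and this is essentially how \cite{1606.00598} proceeds. Alternatively one can bound the spectral radius of $f^*$ on $\Pic^0(X)_\RR$ directly, but as Proposition~\ref{prop:TIRSize} later in the paper indicates, that route needs extra hypotheses (e.g.\ surjectivity of the Albanese map) and is not available in the generality claimed here.
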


\begin{proposition}[Proposition 3.6, \cite{MR4070310}]\label{prop:yohsuke}
Let $X$ be a normal projective variety and $f\colon X\ra X$ a surjective endomorphism with $\lambda_1(f)>1$. Suppose that there is an non-trivial integral $\QQ$-cartier divisor $D$ with $f^*D=\lambda_1(f)D$ in $\Pic(X)_\QQ$. Then the Kawaguchi-Silverman conjecture holds for $f$.
		
\end{proposition}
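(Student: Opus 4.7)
I would combine the canonical-height construction of Theorem~\ref{thm:canheight1} with the pencil in $|mD|$ coming from $\kappa(D)\geq 1$ (the form of the hypothesis spelled out in the introduction). Writing $\lambda=\lambda_1(f)>1$, one has $\lambda>\sqrt{\lambda_1(f)}$, so Theorem~\ref{thm:canheight1} delivers a canonical height $\hat{h}_D$ satisfying $\hat{h}_D\circ f=\lambda\hat{h}_D$, $\hat{h}_D=h_D+O(1)$, and the implication $\hat{h}_D(P)\neq 0\Rightarrow \alpha_f(P)=\lambda_1(f)$. For a point $P\in X(\bar{\QQ})$ with Zariski dense orbit, KSC therefore reduces to proving $\hat{h}_D(P)\neq 0$, and I would argue by contradiction, assuming $\hat{h}_D(P)=0$.

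The functional equation then gives $\hat{h}_D(f^n(P))=\lambda^n\cdot 0=0$, and hence $h_D(f^n(P))=O(1)$ for every $n$. Using $\kappa(D)\geq 1$, I would pick $m\geq 1$ with $\dim H^0(X,mD)\geq 2$ and linearly independent sections $s_0,s_1\in H^0(X,mD)$ whose ratio is non-constant, producing a dominant rational map $\pi=[s_0:s_1]\colon X\dashrightarrow\PP^1$. Let $F$ denote the common vanishing locus of $s_0$ and $s_1$ (which contains the indeterminacy locus of $\pi$). The standard height comparison for the sub-linear system $\langle s_0,s_1\rangle\subset H^0(X,mD)$ yields
\[
h_{\OO(1)}(\pi(Q))\;\leq\; m\,h_D(Q)+O(1)\qquad\text{for } Q\in X\setminus F.
\]

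Now Zariski density of $\{f^n(P)\}$ together with $F\subsetneq X$ forces the subset $\{n: f^n(P)\notin F\}$ to be infinite and the corresponding iterates to remain Zariski dense in $X$. For such $n$ the displayed inequality gives $h_{\OO(1)}(\pi(f^n(P)))=O(1)$, and since all these points lie in $\PP^1(K)$ for a fixed number field $K$ over which $X, f, \pi, P$ are defined, Northcott's theorem makes $\{\pi(f^n(P))\}$ finite. On the other hand, Zariski density of those same iterates forces their $\pi$-image to be Zariski dense in $\overline{\pi(X)}=\PP^1$, hence infinite --- the contradiction. Therefore $\hat{h}_D(P)\neq 0$, so $\alpha_f(P)=\lambda_1(f)$ and KSC holds. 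The only technical point I expect to require care is the exceptional-locus bookkeeping around $F$ (in particular, verifying that deleting $F$ preserves Zariski density of the orbit); all genuinely dynamical input is encapsulated in Theorem~\ref{thm:canheight1}, and the arithmetic input is entirely Northcott's finiteness theorem.
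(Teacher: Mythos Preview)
The paper does not supply its own proof of this proposition; it is quoted verbatim from \cite{MR4070310} and used throughout as a black box, so there is nothing in the present paper to compare against. Your argument is correct and is, in outline, the original proof: the canonical-height machinery of Theorem~\ref{thm:canheight1} reduces KSC to showing $\hat{h}_D(P)\neq 0$ for any $P$ with dense orbit, and the hypothesis $\kappa(D)>0$ (which you rightly retrieve from the introduction, since the proposition as restated here accidentally omits it) furnishes a dominant rational map $\pi\colon X\dashrightarrow\PP^1$ via two independent sections of $|mD|$; the height comparison $h_{\OO(1)}\circ\pi\leq m\,h_D+O(1)$ off the base locus, together with $h_D(f^n(P))=O(1)$ and Northcott, then forces $\pi(\OO_f(P))$ to be finite, contradicting density. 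The exceptional-locus bookkeeping you flag is routine: $F$ is a proper closed subset, so infinitely many iterates avoid it and remain Zariski dense in $X$, whence their $\pi$-images are Zariski dense in $\PP^1$ since $\pi$ is dominant and each fibre over a point of $\PP^1$ is a proper closed subset of $X$.
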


Often we will attempt to reduce a question about the dynamical degree to a related variety by some sort of fibration. 
\begin{definition}[{\cite[Definition 2.1]{1802.07388}}]
Suppose that we have a commuting diagram of normal projective varieties defined over $\bar{\QQ}$

\[\xymatrix{X\ar[r]^f\ar[d]_\pi & X\ar[d]^\pi\\ Y\ar[r]_g & Y}\]

where $f,g,\pi$ are all surjective morphisms. The first relative dynamical degree of $f$ with respect to $\pi$ is defined to be \[\lambda_1(f\mid_\pi)=\lim_{n\ra\infty}((f^n)^*H_X\cdot (\pi^*H_Y)^{\dim Y}\cdot H_X^{\dim X-\dim Y-1})^{\frac{1}{n}}\]	

where $H_X,H_Y$ are ample divisors on $X$ and $Y$ respectively. 
\end{definition}

One may also look at (\cite{1802.07388}) for further references involving this notion, for example (\cite{MR2851870}). 

\begin{theorem}[{\cite[Theorem 2.2]{1802.07388}}]
Suppose that we have a commuting diagram of normal projective varieties defined over $\bar{\QQ}$

\[\xymatrix{X\ar[r]^f\ar[d]_\pi & X\ar[d]^\pi\\ Y\ar[r]_g & Y}\]

where $f,g,\pi$ are all surjective morphisms. Then 
\[\lambda_1(f)=\max\{\lambda_1(f\mid_\pi),\lambda_1(g)\}.\]
\end{theorem}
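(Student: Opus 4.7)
The plan is to prove $\lambda_1(f) \geq \max\{\lambda_1(f|_\pi), \lambda_1(g)\}$ and the reverse inequality separately. Throughout I would use the identity $(f^n)^*\pi^* = \pi^*(g^n)^*$ coming from $\pi f = g\pi$ and the projection formula. Fix ample divisors $H_X$ on $X$ and $H_Y$ on $Y$, rescaled so that $H' := H_X - \pi^* H_Y$ is still ample, and let $F$ denote a general fiber of $\pi$.

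For the lower bound, binomially expand
\[(f^n)^* H_X \cdot H_X^{\dim X - 1} = \sum_{k=0}^{\dim X - 1} \binom{\dim X - 1}{k} (f^n)^* H_X \cdot (\pi^* H_Y)^k \cdot (H')^{\dim X - 1 - k},\]
where every summand is nonnegative since the factors are nef. Two summands give the desired bounds. The $k = \dim Y$ term evaluates, using $(\pi^* H_Y)^{\dim Y} = \deg(H_Y)\cdot [F]$ together with $\pi^* H_Y|_F = 0$, to a positive multiple of the intersection number $((f^n)^* H_X)|_F \cdot (H_X|_F)^{\dim X - \dim Y - 1}$ that defines $\lambda_1(f|_\pi)$, yielding $\lambda_1(f) \geq \lambda_1(f|_\pi)$. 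For $\lambda_1(f) \geq \lambda_1(g)$, bound below by dropping $(f^n)^*H'$ from $(f^n)^* H_X = \pi^*(g^n)^* H_Y + (f^n)^* H'$, then isolate the $k = \dim Y - 1$ summand, which equals $\pi^*\bigl((g^n)^* H_Y \cdot H_Y^{\dim Y - 1}\bigr) \cdot (H')^{\dim X - \dim Y}$; the projection formula evaluates this to $\deg_{H'}(F) \cdot \bigl((g^n)^* H_Y \cdot H_Y^{\dim Y - 1}\bigr)$, which grows like $\lambda_1(g)^n$.

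For the upper bound, the approach is to analyse the pullback operator $f^*$ on $N^1(X)_\RR$ together with the filtration induced by $\pi$. The subspace $\pi^* N^1(Y)_\RR$ is $f^*$-stable and the restricted action is identified with $g^*$ via the commutation relation, so its spectral radius is $\lambda_1(g)$. The induced operator on the quotient $N^1(X)_\RR / \pi^* N^1(Y)_\RR$ encodes the relative dynamics, and one identifies its spectral radius with $\lambda_1(f|_\pi)$ by comparing operator norms to the relative intersection number from the definition, using the same binomial expansion to extract the fibre-wise contribution. Since $f^*$ is block upper-triangular with respect to this filtration, its spectral radius is the maximum of the two spectral radii on the invariant subspace and on the quotient; the result then follows because $\lambda_1(f)$ equals the spectral radius of $f^*$ on $N^1(X)_\RR$.

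The main obstacle is the upper bound, specifically the identification of the quotient spectral radius with $\lambda_1(f|_\pi)$: this requires a Khovanskii--Teissier/Cauchy--Schwarz log-concavity inequality on mixed intersection numbers of nef divisors, showing that intermediate mixed terms in the binomial expansion cannot exceed the geometric mean of the two extremes $k = 0$ (horizontal) and $k = \dim Y$ (fibre-wise). The first-dynamical-degree case simplifies because only a single factor of $(f^n)^* H_X$ appears in the defining intersection, so the argument reduces to a single Cauchy--Schwarz step rather than the full Dinh--Nguyen asymptotic estimates used for higher dynamical degrees.
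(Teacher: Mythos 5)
The paper does not prove this statement; it quotes it directly from Lesieutre--Satriano \cite{1802.07388}, so there is no internal proof to compare against. Your lower bound $\lambda_1(f)\geq\max\{\lambda_1(f\mid_\pi),\lambda_1(g)\}$ is essentially correct and is the standard argument: expanding $H_X^{\dim X-1}=(\pi^*H_Y+H')^{\dim X-1}$, using that products of nef classes are nonnegative to drop all but one term, and then either reading off the relative degree at $k=\dim Y$ or substituting $(f^n)^*H_X\geq\pi^*(g^n)^*H_Y$ and applying the projection formula at $k=\dim Y-1$.

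The upper bound, however, has a genuine gap, and it is precisely the hard half of the theorem. Your reduction via the $f^*$-stable filtration $\pi^*N^1(Y)_\RR\subset N^1(X)_\RR$ is sound, and the restricted operator does have spectral radius $\lambda_1(g)$. But the claim that the induced operator on the quotient has spectral radius $\lambda_1(f\mid_\pi)$ is never established, and the sketch you offer cannot close it. Two concrete problems. First, the relative intersection number $D\mapsto D\cdot(\pi^*H_Y)^{\dim Y}\cdot H_X^{\dim X-\dim Y-1}$ is a \emph{linear functional}, not a norm; it vanishes on a hyperplane, so it does not induce a norm on the quotient $N^1(X)_\RR/\pi^*N^1(Y)_\RR$ unless that quotient happens to be one-dimensional. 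Second, the Khovanskii--Teissier bound on intermediate binomial terms is circular: the two extremes you propose to interpolate between are the $k=\dim Y$ term, which grows like $\lambda_1(f\mid_\pi)^n$, and the $k=0$ term $(f^n)^*H_X\cdot(H')^{\dim X-1}$, which (since $H'$ is ample) already grows like $\lambda_1(f)^n$ -- exactly the quantity you are trying to control. Interpolating between these gives no new information. What is actually needed to show that the quotient spectral radius is at most $\lambda_1(f\mid_\pi)$ is an inequality in the other direction, bounding the full intersection number by fibre-wise data, and that is where the substantial intersection-theoretic input (a Siu-type or reverse Khovanskii--Teissier estimate, as in Dinh--Nguyen, Truong, or Dang) enters. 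Your proposal identifies the right place where the difficulty lives but does not resolve it.
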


We obtain immediately 

\begin{corollary}[{\cite[Definition 2.7]{1802.07388}\label{cor:standard1}}]
Suppose that we have a commuting diagram of normal projective varieties defined over $\bar{\QQ}$

\[\xymatrix{X\ar[r]^f\ar[d]_\pi & X\ar[d]^\pi\\ Y\ar[r]_g & Y}\]

where $f,g,\pi$ are all surjective morphisms. Suppose that the Kawaguchi-Silverman conjecture holds for $g$ and $\lambda_1(f)=\lambda_1(g)$. Then the Kawaguchi-Silverman conjecture holds for $f$.	
\end{corollary}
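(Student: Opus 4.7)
The plan is to take a point $P\in X(\bar\QQ)$ whose $f$-orbit is Zariski dense in $X$ and show that $\alpha_f(P)=\lambda_1(f)$ by sandwiching: Theorem \ref{thm:fundineq} gives the upper bound $\alpha_f(P)\le\lambda_1(f)$ for free, so the work is to produce a matching lower bound, and the only tool available is the known KSC for $g$. The natural route is to push the situation down along $\pi$: use the hypothesis that $\pi$ is surjective to transfer Zariski density of the orbit from $X$ to $Y$, apply KSC for $g$ at $\pi(P)$, and then pull the resulting height growth back up to $X$ using functoriality of heights under $\pi$.

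The first step is the density descent. Let $W\subset Y$ be the Zariski closure of $\{g^n(\pi(P)):n\ge 0\}=\{\pi(f^n(P)):n\ge 0\}$. Then $\pi^{-1}(W)$ is closed in $X$ and contains the $f$-orbit of $P$, so $\pi^{-1}(W)=X$; since $\pi$ is surjective, applying $\pi$ gives $W=Y$. Hence $\pi(P)$ has Zariski-dense $g$-orbit, and the assumed KSC for $g$ yields $\alpha_g(\pi(P))=\lambda_1(g)$, which by hypothesis equals $\lambda_1(f)$.

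The second step is the height comparison. Fix ample divisors $H_X$ on $X$ and $H_Y$ on $Y$. By functoriality of Weil heights, $h_{\pi^*H_Y}(Q)=h_{H_Y}(\pi(Q))+O(1)$ for $Q\in X(\bar\QQ)$. Since $H_X$ is ample and ampleness is an open condition, $mH_X-\pi^*H_Y$ is ample for some integer $m\ge 1$, and heights of ample classes are bounded below up to $O(1)$, giving
\[
h_{H_X}(Q)\ \ge\ \tfrac1m\,h_{\pi^*H_Y}(Q)+O(1)\ =\ \tfrac1m\,h_{H_Y}(\pi(Q))+O(1).
\]
Applying this with $Q=f^n(P)$ and passing to $h^+$, the additive $O(1)$ is negligible against the growth of $h^+_{H_Y}(g^n(\pi(P)))$, so taking $n$-th roots and $\liminf$ yields
\[
\alpha_f(P)\ \ge\ \liminf_{n\to\infty} h^+_{H_Y}(g^n(\pi(P)))^{1/n}\ =\ \alpha_g(\pi(P))\ =\ \lambda_1(g)\ =\ \lambda_1(f).
\]
(Recall that because $f$ is a morphism, $\alpha_f(P)$ exists as an honest limit, so there is no ambiguity between $\limsup$ and $\liminf$.) Combined with Theorem \ref{thm:fundineq}, this forces equality.

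I do not expect a genuine obstacle here: every step is either a direct invocation of a cited result (KSC for $g$, the fundamental inequality, functoriality of heights) or a standard general-nonsense argument (orbit-closure pushforward, ample-openness height comparison). The only point requiring a trace of care is making sure the additive $O(1)$ in the height comparison does not corrupt the exponential-rate limit, which it cannot once one works with $h^+$ and uses that $n$-th roots smooth out additive constants whenever the underlying heights tend to infinity; and if they do not tend to infinity along the orbit of $\pi(P)$ that is also fine, because then $\alpha_g(\pi(P))=1$ would contradict KSC for $g$ combined with $\lambda_1(g)=\lambda_1(f)>1$ in the only nontrivial case (the case $\lambda_1(f)=1$ is vacuous for KSC).
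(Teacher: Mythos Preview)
Your proof is correct and follows exactly the same approach as the paper's: push the dense orbit down via $\pi$, apply KSC for $g$ to get $\alpha_g(\pi(P))=\lambda_1(g)=\lambda_1(f)$, invoke $\alpha_f(P)\ge\alpha_g(\pi(P))$, and finish with the fundamental inequality. The paper's version simply asserts the density descent and the inequality $\alpha_f(P)\ge\alpha_g(\pi(P))$ without justification, whereas you have spelled out both steps in full.
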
	
\begin{proof}
Let $P\in X(\bar{\QQ})$ be a point with dense $f$-orbit. Then $\pi(P)$ has a dense $g$ orbit and we have that $\alpha_f(P)\geq \alpha_g(\pi(P))=\lambda_1(g)=\lambda_1(f)$ by the Kawaguchi-Silverman conjecture for $g$ and our assumption on the dynamical degree. Since we know that $\alpha_f(P)\leq \lambda_1(f)$ the result follows. 
\end{proof}

The following result is crucial in the study of surjective morphisms of Mori-fiber spaces. 

\begin{lemma}[{\cite[Lemma 6.2]{1802.07388}}]\label{lemma:iterationlemma}
Let $\pi\colon X\ra Y$ be a mori fiber space. Suppose that $f\colon X\ra X$ is a surjective endomorphism. Then there is some iterate $f^n\colon X\ra X$ and $g\colon Y\ra Y$ such that

\[\xymatrix{X\ar[r]^{f^n}\ar[d]_\pi & X\ar[d]^\pi\\ Y\ar[r]_g & Y}\]

commutes. 

\end{lemma}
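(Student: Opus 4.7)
The plan is to show that for some iterate $f^n$, the Stein factorization of $\pi \circ f^n\colon X \to Y$ has the same connected-fiber part as $\pi$ itself; then the remaining finite part of that Stein factorization supplies $g$. Let $R \subset \NE(X)$ be the extremal ray contracted by $\pi$; since $\pi$ is a Mori fiber space, $R$ is one-dimensional. I will track the sequence of rays $R_n$ arising from Stein-factorizing $\pi \circ f^n$ as $n$ varies.

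For each $n \geq 0$, write $\pi \circ f^n = \tau_n \circ \sigma_n$ as a Stein factorization with $\sigma_n\colon X \to Z_n$ having connected fibers and $\tau_n\colon Z_n \to Y$ finite. Finiteness of $\tau_n$ gives $\rho(Z_n) \geq \rho(Y)$, so $\rho(X/Z_n) \leq \rho(X/Y) = 1$, while $\sigma_n$ having positive-dimensional fibers forces $\rho(X/Z_n) \geq 1$. Hence $\sigma_n$ is the contraction of a single extremal ray $R_n \subset \NE(X)$. A curve $C$ lies in $R_n$ iff $f^n(C)$ is contracted by $\pi$, iff $(f^n)_*[C] \in \RR R$; thus $R_n = (f^n)_*^{-1}(R)$ as rays in $N_1(X)_\RR$.

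The decisive step is to show that only finitely many rays appear among $\{R_n\}_{n \geq 0}$; then pigeonhole yields $R_m = R_n$ for some $m < n$, equivalently $(f^{n-m})_*R = R$ as rays. To establish this finiteness I bound each $R_n$ within a region of $\NE(X)$ containing only finitely many extremal rays. Using the ramification identity $(f^n)^*K_X = K_X - \sum_{i=0}^{n-1}(f^i)^*R_f$, with $R_f$ the effective ramification divisor, and choosing curve representatives of $R_n$ in general position with respect to the supports of the $(f^i)^*R_f$, one controls $K_X \cdot R_n$ comparably to $K_X \cdot R < 0$; the Cone Theorem then gives only finitely many $K_X$-negative extremal rays in any appropriately bounded region.

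Once $(f^{n-m})_*R = R$ is known, the morphisms $\sigma_{n-m}$ and $\pi$ contract the same extremal ray and are therefore canonically isomorphic, so $\pi \circ f^{n-m} = \tau_{n-m} \circ \pi$ and $g := \tau_{n-m}$ works. Equivalently, $(f^{n-m})_*R = R$ means $f^{n-m}$ carries every curve in a fiber of $\pi$ into a fiber of $\pi$, and the classical rigidity lemma for proper morphisms with connected fibers then produces $g$ directly. The main obstacle is the finiteness step: one must keep the contributions of high iterates of the ramification divisor under control so that the rays $R_n$ remain inside a compact $K_X$-negative region where the Cone Theorem supplies only finitely many candidates.
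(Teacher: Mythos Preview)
The paper does not supply a proof of this lemma; it is quoted verbatim from \cite[Lemma~6.2]{1802.07388} and used as a black box. So there is no argument in the present paper to compare against, and what follows is an assessment of your proposal on its own.

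Your overall architecture is the right one and matches how this result is usually set up: Stein-factorise $\pi\circ f^{n}=\tau_{n}\circ\sigma_{n}$, observe that $\sigma_{n}$ contracts exactly the ray $R_{n}=(f^{n}_{*})^{-1}R$ (extremality is cleanest seen from the fact that $(f_{*})^{-1}$ is an automorphism of $\NE(X)$, not from the relative Picard number count you give), and then invoke a finiteness/pigeonhole argument to get $(f^{\,n-m})_{*}R=R$, after which uniqueness of the contraction of a fixed extremal ray (equivalently, the rigidity lemma) produces $g$. Everything before and after the finiteness step is fine.

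The genuine gap is the finiteness step, and the mechanism you sketch does not work. From $K_{X}=(f^{n})^{*}K_{X}+R_{f^{n}}$ and the projection formula one gets, for a curve $C_{n}$ spanning $R_{n}$ chosen outside $\textnormal{Supp}(R_{f^{n}})$,
\[
K_{X}\cdot C_{n}\;=\;K_{X}\cdot (f^{n})_{*}C_{n}\;+\;R_{f^{n}}\cdot C_{n}\;\ge\;K_{X}\cdot (f^{n})_{*}C_{n}.
\]
This is only a \emph{lower} bound by a negative number; it does not force $K_{X}\cdot C_{n}<0$, and it certainly does not keep the normalised rays inside a compact piece of the $K_{X}$-negative part of $\NE(X)$, so the Cone Theorem cannot be applied as you propose. (Running the same computation on the forward orbit $(f_{*})^{n}R$ does yield $K_{X}$-negativity, but then the normalised $K_{X}$-degree tends to $0$ as $n$ grows, and finiteness is again lost.) In the cited reference the finiteness is obtained by a different route, not by bounding the ramification contribution; you should consult that argument rather than try to push the Cone-Theorem estimate through.
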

There is the following variant of the above.

\begin{lemma}[{\cite[Lemma 3.6]{1902.06071}}]
Let $X$ be a normal $\QQ$-factorial lc projective variety and $f\colon X\ra X$ a surjective endomorphism. Let $R$ is a $K_X$ negative extremal ray and $f_*R=R$. Let $\phi_R\colon X\ra Y$ be the associated extremal contraction. Then there is a morphism $g\colon Y\ra Y$ such that $g\circ \phi_R=\phi_R\circ f$.
\end{lemma}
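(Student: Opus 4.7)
The plan is to show that the composition $\phi_R \circ f: X \to Y$ is set-theoretically constant on every fiber of $\phi_R$, and then invoke the universal property of the Mori contraction (the rigidity lemma) to descend $\phi_R \circ f$ to a unique morphism $g: Y \to Y$ satisfying $g \circ \phi_R = \phi_R \circ f$.

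For the core fiber-constancy step, recall that by the Contraction Theorem an irreducible curve $C \subset X$ is contracted by $\phi_R$ if and only if $[C] \in R$; in particular every irreducible curve lying in a positive-dimensional fiber $F = \phi_R^{-1}(y)$ has numerical class in $R$. Given such a curve $C \subset F$, either $f|_C$ is constant — in which case $\phi_R(f(C))$ is trivially a point — or $f|_C$ is finite and $f_*[C] = (\deg f|_C)\,[f(C)]$ in $N_1(X)$. In the latter case, combining $[C] \in R$ with the hypothesis $f_* R = R$ forces $f_*[C] \in R$, hence $[f(C)] \in R$ and $\phi_R$ contracts $f(C)$ to a point. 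Either way, $\phi_R \circ f$ sends $C$ to a single point. To upgrade constancy-on-each-curve to constancy on all of $F$, I would use the classical fact that a connected projective variety is chain-connected by irreducible curves: any two points $x_1, x_2 \in F$ can be joined by a chain $C_1, \dots, C_m \subset F$ of irreducible curves with $C_i \cap C_{i+1} \neq \emptyset$. Since $\phi_R \circ f$ is constant on each $C_i$ and consecutive curves share a point, the values agree across the chain, so $\phi_R(f(x_1)) = \phi_R(f(x_2))$.

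To descend to a morphism, I invoke that $\phi_R$ is projective, surjective, has connected fibers, and satisfies $\phi_{R*}\OX = \OO_Y$ (all part of being an extremal Mori contraction out of a normal $\QQ$-factorial lc variety). The rigidity lemma in this setting says that any morphism $X \to Z$ to a separated scheme that is set-theoretically constant on every fiber of $\phi_R$ factors uniquely through $\phi_R$ as a morphism of schemes. Applied to $\phi_R \circ f: X \to Y$, this yields the required $g: Y \to Y$. The main obstacle is the chain-connectedness of the fiber $F$ by irreducible curves used in the second paragraph — this is standard for irreducible projective varieties, but it is the step carrying the Mori-theoretic content, namely that fibers of $\phi_R$ are precisely the equivalence classes of the relation "joined by a chain of curves of class in $R$." Once the set-theoretic factorization on fibers is secured, the promotion to a morphism is automatic from $\phi_{R*}\OX = \OO_Y$.
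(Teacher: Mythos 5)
The paper does not prove this lemma; it is quoted directly from the cited reference \cite[Lemma 3.6]{1902.06071} as background, so there is no in-paper proof to compare against. Your reconstruction is the standard argument and it is sound: the Contraction Theorem gives that an irreducible curve is contracted by $\phi_R$ exactly when its class lies in $R$, finiteness of the surjective endomorphism $f$ (or your dichotomy between $f|_C$ constant and $f|_C$ finite, which covers all cases since a non-constant map from a curve to a projective variety is finite onto its image) gives $f_*[C]\in R$ via $f_*R=R$, chain-connectedness of the connected projective fibers of $\phi_R$ upgrades curve-wise constancy to fiber-wise constancy, and the descent then follows from $\phi_{R*}\OX=\OO_Y$ together with normality of $Y$ (concretely, the image of $(\phi_R,\phi_R\circ f)\colon X\to Y\times Y$ is a closed subvariety mapping finitely and bijectively onto $Y$, hence isomorphically by Zariski's main theorem, and that isomorphism's inverse composed with the second projection is $g$). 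The one step you flagged as the main obstacle, chain-connectedness of fibers by irreducible curves, is indeed the Mori-theoretic content but is classical for connected projective schemes; no gap there.
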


We will also need something about the base locus and stable base locus of divisors. Our reference for this is (\cite{PosI}).

\begin{definition}\label{def:baseloci}
Let $X$ be a normal projective variety and $D$ a divisor on $X$.
	
	\begin{enumerate}
		\item We let $\textnormal{Bs}( D )$ be the \emph{base locus} of $D$. That is the set of points at which all sections of $H^0(X,D)$ vanish.
		\item We define the \emph{stable base locus} to be \[\textbf{B}(D)=\bigcap_{m\geq 1}\textnormal{Bs}( mD)\]
	\end{enumerate} 

\end{definition}

\section{Invariance of base locus}
Here we will give a simple argument that shows that the base locus of an integral eigendivisor is totally invariant.
\subsection{Eigendivisors with $\kappa(D)=0$}\label{subsec:heartofpaper} 
Let $X$ be a normal projective variety and suppose that $f\colon X\ra X$ is a surjective endomorphism. Suppose that we have a divisor $D$ with $D$ not linearly equivalent to $0$ with $f^*D\sim_\QQ\lambda D$ in the Picard group of $X$. When $\kappa(D)>0$ we have the following motivating result.

\begin{proposition}[{\cite[Proposition 3.5]{MR4070310}}]\label{prop:Lotsofsections}
Let $X$ be a normal projective variety defined over a number field $K$. Let $f\colon X\ra X$ be a surjective morphism defined over $K$. Let $D$ be a $\QQ$-divisor on $X$ with $f^*D\sim_\QQ\lambda_1(f) D$ with $\lambda_1(f)>1$ and $\kappa(D)>0$. Fix positive constants $A,B$. Then the set $\{P\in X(L):[L:K]\leq A,\hat{h}_D(P)\leq B\}$ is not Zariski dense in $X$. 	
\end{proposition}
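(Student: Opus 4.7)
The dynamical hypotheses on $f$ enter only through Theorem~\ref{thm:canheight1}, which gives $\hat h_D = h_D + O(1)$ for any Weil height $h_D$ on $D$. Once this is in hand, the conclusion is a standard Northcott argument exploiting the assumption $\kappa(D)>0$; the map $f$ plays no further role.

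First, since $\kappa(D)>0$, I choose $m\geq 1$ with $h^0(X,mD)\geq 2$ and large enough that the rational map $\phi=\phi_{|mD|}\colon X\dashrightarrow \PP^N$ attached to a basis of $H^0(X,mD)$ has image $Y:=\overline{\phi(X)}$ of positive dimension. By the usual functoriality of heights under rational maps attached to linear systems, there is a constant $c_1$ such that
\[
m\, h_D(P)\;\geq\; h_{\OO_{\PP^N}(1)}\bigl(\phi(P)\bigr) - c_1
\]
for every $P\in X(\bar K)\setminus \textnormal{Bs}(|mD|)$. Combining with $\hat h_D = h_D + O(1)$ produces a constant $c$ with
\[
h_{\OO_{\PP^N}(1)}\bigl(\phi(P)\bigr)\;\leq\; m\,\hat h_D(P)+c \qquad \text{for all } P\in X(\bar K)\setminus \textnormal{Bs}(|mD|).
\]

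Now fix $A,B>0$ and consider a point $P\in X(L)$ with $[L:K]\leq A$, $\hat h_D(P)\leq B$, and $P\notin \textnormal{Bs}(|mD|)$. Then $\phi(P)\in Y(\bar K)$ satisfies $[K(\phi(P)):K]\leq A$ and $h_{\OO_{\PP^N}(1)}(\phi(P))\leq mB+c$. Northcott's theorem applied to $\PP^N$ gives a finite set $\{Q_1,\ldots,Q_r\}\subset \PP^N(\bar K)$ containing all such $\phi(P)$. Therefore the set of points in the statement is contained in
\[
\textnormal{Bs}(|mD|)\;\cup\;\bigcup_{i=1}^{r}\overline{\phi^{-1}(Q_i)},
\]
which is a proper closed subset of $X$: the base locus has codimension at least one, and because $\dim Y\geq 1$, each fiber closure $\overline{\phi^{-1}(Q_i)}$ has dimension at most $\dim X - 1$.

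The only real technical point is the displayed height inequality in the presence of base loci; the map $\phi$ is merely rational. I would handle this by passing to a resolution $\pi\colon \widetilde X\to X$ on which $\pi^*|mD|$ splits as a base-point-free moving part $|M|$ plus a fixed divisor $F$, comparing heights via the induced morphism $\widetilde\phi\colon\widetilde X\to\PP^N$ and $\widetilde\phi^*\OO(1)=M$, and then pushing the estimate back down to $X$; the contribution of $F$ and of the $\pi$-exceptional divisors is absorbed into the $O(1)$ away from $\textnormal{Bs}(|mD|)$. This is routine but is where the bulk of the bookkeeping lives.
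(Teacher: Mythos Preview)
The paper does not prove this proposition: it is quoted verbatim from \cite[Proposition 3.5]{MR4070310} and used only as a black box (it motivates the subsequent discussion of eigendivisors with $\kappa(D)=0$ and is invoked inside Proposition~\ref{prop:LargeJordanBlock}). So there is no in-paper proof to compare against.

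That said, your argument is the standard one and is essentially what one finds in the cited source. The logic is sound: Theorem~\ref{thm:canheight1} gives $\hat h_D=h_D+O(1)$ because the eigen-relation holds in $\Pic(X)_\QQ$ rather than merely numerically; the Iitaka hypothesis $\kappa(D)>0$ produces a rational map to a positive-dimensional target; and Northcott on $\PP^N$ finishes it. Your handling of the base locus via a resolution splitting $\pi^*|mD|=|M|+F$ is exactly the right way to make the height comparison rigorous, and you correctly identify it as the only place where care is needed. One small remark: to get the bounded-degree hypothesis to transfer to $\phi(P)$ you are implicitly using that $\phi$ is defined over $K$, which follows since $D$ and hence $|mD|$ are; this is harmless but worth saying explicitly.
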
	
A natural weakening of (\ref{prop:Lotsofsections}) is to allow $\kappa(D)=0$. 
\begin{lemma}{\label{lemma:Baseinv}}
Let $X$ be a normal projective variety defined over $\bar{\QQ}$ and let $f\colon X\ra X$ be a finite surjective endomorphism. Take $D$ be a non-principal integral divisor with $f^*D\sim_\QQ\lambda D$ for some integral $\lambda>1$. Suppose that 

\begin{enumerate}
    \item $\boldsymbol{B}(D)=\textnormal{Bs}(mD)$ for all $m\geq 1$
	\item $H^0(X,mD)\cong H^0(X,D)$ for all $m\geq 1$.
\end{enumerate} 	

Then $f^{-1}(\boldsymbol{B}(D))=\boldsymbol{B}(D)$.	
\end{lemma}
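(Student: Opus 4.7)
The plan is to exploit the two hypotheses to show that pullback of sections induces an isomorphism between appropriate spaces of global sections, and then deduce the set-theoretic invariance of the base locus formally. Since $f^{*}D \sim_{\QQ} \lambda D$, I first clear denominators to find an integer $N \geq 1$ with $f^{*}(ND) \sim N\lambda D$ in $\Pic(X)$. Fixing a nowhere-vanishing isomorphism between the corresponding line bundles, pullback of sections yields a linear map
\[
f^{*} \colon H^{0}(X, ND) \longrightarrow H^{0}(X, N\lambda D),
\]
which is injective because $f$ is surjective: any nonzero global section whose pullback vanishes identically would have to vanish on $f(X) = X$.

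Next I would upgrade this injection to a bijection using hypothesis (2). That hypothesis gives $\dim H^{0}(X, ND) = \dim H^{0}(X, D) = \dim H^{0}(X, N\lambda D)$, so matching finite dimensions force the injection $f^{*}$ to be an isomorphism of vector spaces. In particular every nonzero $t \in H^{0}(X, N\lambda D)$ equals $f^{*}s'$ for some nonzero $s' \in H^{0}(X, ND)$, and because the chosen isomorphism of line bundles is nowhere vanishing, the zero locus of $t$ coincides set-theoretically with $f^{-1}(Z(s'))$.

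Intersecting over all nonzero $s'$ and using that $f^{-1}$ commutes with arbitrary set-theoretic intersections, I obtain
\[
\textnormal{Bs}(N\lambda D) \;=\; \bigcap_{s' \neq 0} f^{-1}(Z(s')) \;=\; f^{-1}\!\left(\bigcap_{s' \neq 0} Z(s')\right) \;=\; f^{-1}(\textnormal{Bs}(ND)).
\]
Hypothesis (1) then gives $\textnormal{Bs}(ND) = \textnormal{Bs}(N\lambda D) = \boldsymbol{B}(D)$, so the displayed equation collapses to $f^{-1}(\boldsymbol{B}(D)) = \boldsymbol{B}(D)$, as desired. The main obstacle is the bookkeeping around the $\QQ$-linear equivalence: one must pass to the multiple $ND$ to obtain an honest equality of line bundles and then confirm that the auxiliary trivialization used to identify $f^{*}(ND)$ with $N\lambda D$ does not distort zero loci, which is automatic since any isomorphism of invertible sheaves is nowhere vanishing. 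A subtler point, not needed for the stated lemma, would be passing to the scheme-theoretic base locus, where one would have to compare $f^{-1}\mathcal{I}_{\textnormal{Bs}(ND)} \cdot \mathcal{O}_X$ with $\mathcal{I}_{\textnormal{Bs}(N\lambda D)}$ and worry about ramification of $f$.
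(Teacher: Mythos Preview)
Your proposal is correct and follows essentially the same approach as the paper: both arguments use surjectivity of $f$ to get injectivity of $f^{*}$ on global sections, invoke hypothesis (2) to upgrade this to an isomorphism $H^{0}(X,ND)\cong H^{0}(X,N\lambda D)$, and then use hypothesis (1) to identify the relevant base loci with $\boldsymbol{B}(D)$. Your presentation is in fact slightly cleaner in two respects: you handle the $\QQ$-linear equivalence by explicitly clearing denominators (the paper silently treats $f^{*}D\sim\lambda D$ as an integral equivalence), and you obtain both inclusions at once via the single computation $\textnormal{Bs}(N\lambda D)=f^{-1}(\textnormal{Bs}(ND))$, whereas the paper checks $P\in\boldsymbol{B}(D)\Rightarrow f(P)\in\boldsymbol{B}(D)$ and $f(P)\in\boldsymbol{B}(D)\Rightarrow P\in\boldsymbol{B}(D)$ separately.
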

\begin{proof}
First let $P\in \boldsymbol{B}(D)$. Let $s\in H^0(X,D)$. To show $f(P)\in \boldsymbol{B}(D)$ we must show that $s(f(P))=(f^*s)(P)=0$. Note that $f^*s\in H^0(X,\lambda D)$. By assumption we have that $\textnormal{Bs}(\lambda D)=\textnormal{Bs}(D)$ so $(f^*s)(P)=0$ for all $s\in H^0(X,D)$. Thus $f(P)\in \boldsymbol{B}(D)$ as needed. Conversely, suppose that $f(P)\in \boldsymbol{B}(D)$ for some $P$. This means that $s(f(P))=(f^*s)(P)=0$ for all $s\in H^0(X,D)$. On the other hand since $f$ is surjective and we are dealing with a vector bundle, $f^*$ is an injective vector space homomorphism. Thus by assumption

\[f^*\colon H^0(X,D)\ra H^0(X,\lambda D)\]

is an isomorphism. Thus for all $s^\prime\in H^0(X,\lambda D)$ we have $s^\prime=f^*s$ for some $s\in H^0(X,D)$. It follows that $P\in \textnormal{Bs}(\lambda D)=\boldsymbol{B}(D)$ as needed. 	
\end{proof}

\begin{proposition}
Let $X$ be a normal projective variety defined over $\bar{\QQ}$ and let $f\colon X\ra X$ be a surjective endomorphism. Let $D$ be a non-principal divisor with $f^*D\sim_\QQ\lambda D$ for some integral $\lambda>1$. Suppose that $\kappa(D)=0$. Then $f^{-1}(\boldsymbol{B}(D))=\boldsymbol{B}(D)$. 
	
\end{proposition}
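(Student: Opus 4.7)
The plan is to reduce to Lemma \ref{lemma:Baseinv} by replacing $D$ with a sufficiently divisible positive integer multiple $D' = ND$ for which both hypotheses of that lemma hold. All the relevant data is stable under this operation: $f^*(ND) \sim_\QQ \lambda(ND)$ with the same integer $\lambda > 1$, the Iitaka dimension is unchanged, and $\boldsymbol{B}(ND) = \boldsymbol{B}(D)$, so the conclusion of the lemma for $D'$ immediately transfers back to $D$. By enlarging $N$ further if needed, we may also upgrade $\sim_\QQ$ to an integral linear equivalence $f^*D' \sim \lambda D'$, which is what the proof of Lemma \ref{lemma:Baseinv} implicitly uses when pulling back sections.

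The heart of the argument is the construction of $N$, for which we need a simultaneous stabilization of the dimensions $h^0(X, mD)$ and of the base loci $\textnormal{Bs}(mD)$ along all multiples. Because $\kappa(D) = 0$, some multiple $m_0 D$ admits a nonzero global section, so after absorbing $m_0$ into $D$ we may assume $D$ is effective; fix a nonzero $s_0 \in H^0(X, D)$. Multiplication by $s_0$ gives injections $H^0(X, mD) \hookrightarrow H^0(X, (m+1)D)$, so $h^0(X, mD)$ is non-decreasing; since $\kappa(D) = 0$ forces this sequence to be bounded, it stabilizes at some constant $C$ for $m \geq M_1$. Separately, whenever $m \mid m'$ the natural multiplication $H^0(X, mD)^{\otimes m'/m} \to H^0(X, m'D)$ shows $\textnormal{Bs}(m'D) \subseteq \textnormal{Bs}(mD)$, so $\{\textnormal{Bs}(k!D)\}_{k \geq 1}$ is a descending chain of closed subvarieties; Noetherianity of $X$ yields some $M_2$ with $\textnormal{Bs}(M_2! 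D) = \boldsymbol{B}(D)$, the identification with $\boldsymbol{B}(D)$ coming from the fact that for each $m$ one has $\textnormal{Bs}(k!D) \subseteq \textnormal{Bs}(mD)$ as soon as $k \geq m$. Setting $N := \textnormal{lcm}(M_1, M_2!)$, every $m \geq 1$ satisfies $mN \geq M_1$ and $M_2! \mid mN$, so simultaneously $h^0(X, mND) = C$ and $\textnormal{Bs}(mND) = \boldsymbol{B}(D) = \boldsymbol{B}(ND)$; hence both hypotheses of Lemma \ref{lemma:Baseinv} hold for $D' = ND$.

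Applying Lemma \ref{lemma:Baseinv} to $D'$ then yields $f^{-1}(\boldsymbol{B}(D')) = \boldsymbol{B}(D')$, and since $\boldsymbol{B}(D') = \boldsymbol{B}(D)$, the proposition follows. The main obstacle is engineering both stabilizations at once for every $m \geq 1$, not merely for $m$ sufficiently large, and this is exactly what forces the common-multiple construction via $\textnormal{lcm}(M_1, M_2!)$; once $D'$ is in place, the rest is the clean finite-dimensional isomorphism-of-global-sections argument already carried out in the lemma.
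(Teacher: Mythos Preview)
Your proof is correct and follows essentially the same strategy as the paper: replace $D$ by a sufficiently divisible multiple so that both the global-section dimensions and the base loci stabilize, then invoke Lemma~\ref{lemma:Baseinv}. The only differences are cosmetic---you prove the base-locus stabilization directly via the Noetherian descending chain $\textnormal{Bs}(k!D)$ rather than citing \cite[2.1.21]{PosI}, and you are more explicit about clearing denominators to pass from $\sim_\QQ$ to genuine linear equivalence before pulling back sections.
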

\begin{proof}
	Since $f^*$ induces an injection on the group of sections 
	\[\dim H^0(X,D)=\dim f^*H^0(X,D)\leq\dim H^0(X,\lambda D)\]
	As $\kappa(D)=0$ there is some $m_2$ such that 
	
	\[\dim H^0(X,mD)=\dim H^0(X,m_2D)\]
	
	for all $m\geq m_2$. We may choose a large integer $m_1\geq m_2$ such that \[\textnormal{Bs}( km_1D)=\boldsymbol{B}(D)\] for all $k\geq 1$ by (\cite[2.1.21]{PosI}). In conclusion we have verified the needed hypothesis to apply (\ref{lemma:Baseinv})
	 
\end{proof}

\begin{corollary}\label{cor:invcor}
	Let $X$ be a normal projective variety and let $f\colon X\ra X$ be a surjective endomorphism. Let $D$ be a non-principal divisor with $f^*D\sim_\QQ\lambda D$ for some integral $\lambda>1$. Suppose that $\kappa(D)=0$. Then there is a integral multiple of $D^\prime$ of $D$ such that 
	
	\[f^{-1}(\textnormal{Bs}( D^\prime))=\textnormal{Bs}( D^\prime).\]
	
\end{corollary}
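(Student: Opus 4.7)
The plan is to deduce this directly from the preceding proposition, which already gives $f^{-1}(\boldsymbol{B}(D)) = \boldsymbol{B}(D)$ for the \emph{stable} base locus. The only remaining task is to realize the stable base locus as the actual base locus of a single integral multiple of $D$.

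First, I would invoke the preceding proposition to get $f^{-1}(\boldsymbol{B}(D)) = \boldsymbol{B}(D)$, noting that the hypotheses $\kappa(D)=0$ and $f^*D \sim_\QQ \lambda D$ with integral $\lambda > 1$ are inherited from the corollary's statement. Second, I would appeal to the standard stabilization result (\cite[2.1.21]{PosI}), which guarantees the existence of an integer $m_0 \geq 1$ such that $\textnormal{Bs}(km_0 D) = \boldsymbol{B}(D)$ for all $k \geq 1$. In particular $\textnormal{Bs}(m_0 D) = \boldsymbol{B}(D)$.

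Third, set $D' := m_0 D$. Since $D'$ is a positive integer multiple of $D$, their stable base loci coincide, so $\boldsymbol{B}(D') = \boldsymbol{B}(D)$. Combining everything,
\[
f^{-1}(\textnormal{Bs}(D')) \;=\; f^{-1}(\boldsymbol{B}(D)) \;=\; \boldsymbol{B}(D) \;=\; \textnormal{Bs}(D'),
\]
which is exactly the desired conclusion.

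There is essentially no obstacle here: the corollary is a direct packaging of the preceding proposition with the elementary fact that the stable base locus is realized by the base locus of some explicit multiple. The one minor point worth double-checking is that $D'=m_0 D$ is still a genuine eigendivisor in the sense the earlier proposition requires, namely $f^*D' \sim_\QQ \lambda D'$ with integral $\lambda > 1$; this is immediate since pullback and the relation $\sim_\QQ$ are both compatible with multiplication by a positive integer.
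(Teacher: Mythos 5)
Your proof is correct and matches the intended argument: the paper states the corollary without a separate proof, precisely because it is the direct packaging you describe. You invoke the preceding proposition for $f^{-1}(\boldsymbol{B}(D))=\boldsymbol{B}(D)$, use the stabilization \cite[2.1.21]{PosI} to write $\boldsymbol{B}(D)=\textnormal{Bs}(m_0 D)$, and set $D'=m_0 D$; the chain of equalities then closes. This is the same mechanism used inside the paper's proof of the proposition (where the lemma is applied to the multiple $m_1 D$), so there is nothing new to add.
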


\subsection{Numerical vs Linear equivalence}

A basic problem to be overcome is the following. Let $f\colon X\ra X$ be a surjective endomorphism of a normal projective variety $X$. Let $\lambda_1(f)=\lambda>1$. Then $f^*$ acting on $N^1(X)_\RR$ has $\lambda$ as an eigenvalue of maximal absolute value. However, we would like to have $\lambda$ as an eigenvalue of largest absolute value when acting on $\Pic(X)_\RR$. This is almost true in the following sense.

\begin{proposition}\label{prop:eigenvaluesize}
Let $f\colon X\ra X$ be a surjective endomorphism of a normal projective variety $X$. Let $\lambda_1(f)=\lambda>1$. Then there is an eigenvalue $\lambda^\prime$ for $f^*$ acting on $\Pic(X)_\RR$ such that $\mid \lambda^\prime\mid=\lambda.$
\end{proposition}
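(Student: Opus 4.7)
The plan is to work with the finite-dimensional $f^*$-invariant subspace $V_H \subset \Pic(X)_\RR$ from Notation \ref{not:VH} and to leverage the fact, recalled in Section 2, that $\lambda = \lambda_1(f)$ is the spectral radius of $f^*$ on $N^1(X)_\RR$. First I would consider the natural $f^*$-equivariant surjection $q\colon \Pic(X)_\RR \ra N^1(X)_\RR$ and restrict it to $V_H$. Its image $\overline{V}_H := q(V_H) \subset N^1(X)_\RR$ is a finite-dimensional $f^*$-invariant subspace, and its kernel $W := V_H \cap \ker q$ is also $f^*$-invariant, yielding an $f^*$-equivariant short exact sequence
\[ 0 \ra W \ra V_H \ra \overline{V}_H \ra 0. \]
Taking characteristic polynomials, $\chi_{f^*|V_H}(t) = \chi_{f^*|W}(t)\cdot \chi_{f^*|\overline{V}_H}(t)$, so every eigenvalue of $f^*|_{\overline{V}_H}$ is in particular an eigenvalue of $f^*|_{V_H}$, and hence of $f^*$ acting on $\Pic(X)_\RR$.

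Next I would show that the spectral radius of $f^*|_{\overline{V}_H}$ equals $\lambda$. The upper bound $\rho(f^*|_{\overline{V}_H}) \leq \rho(f^*|_{N^1(X)_\RR}) = \lambda_1(f)$ is immediate from $\overline{V}_H \subset N^1(X)_\RR$. For the matching lower bound, fix any norm $\|\cdot\|$ on $\overline{V}_H$ and consider the linear functional $L\colon D \mapsto D \cdot H^{\dim X - 1}$. The Jordan decomposition of $f^*|_{\overline{V}_H}$ yields a polynomial bound $\|(f^n)^*H\| \leq C\, n^{\dim \overline{V}_H}\, \rho(f^*|_{\overline{V}_H})^n$, whence
\[ (f^n)^*H \cdot H^{\dim X - 1} = L((f^n)^*H) \leq \|L\| \cdot \|(f^n)^*H\| \leq C'\, n^{\dim \overline{V}_H}\, \rho(f^*|_{\overline{V}_H})^n. \]
Taking $n$-th roots as $n \to \infty$ gives $\lambda_1(f) \leq \rho(f^*|_{\overline{V}_H})$, so equality holds, and this spectral radius is attained by some eigenvalue $\lambda'$ of $f^*|_{\overline{V}_H}$.

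Combining the two steps, $\lambda'$ is an eigenvalue of $f^*|_{V_H} \subset \Pic(X)_\RR$ with $|\lambda'| = \lambda$, which is what the proposition asserts. The main obstacle in this outline is the lower bound in the second step: one needs the intersection-theoretic growth rate defining $\lambda_1(f)$ to be detected by norm growth on the cyclic subspace $\overline{V}_H$ and not lost when passing from $\Pic(X)_\RR$ down to $N^1(X)_\RR$. The Jordan-form estimate handles this cleanly, and the remaining bookkeeping on eigenvalues via the short exact sequence is routine.
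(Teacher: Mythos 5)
Your argument is correct, but it takes a genuinely different route from the paper's. The paper's proof is short and arithmetic: it invokes Theorem~\ref{theorem:MSS} to produce a single point $P$ with $\alpha_f(P)=\lambda_1(f)$, and then applies Theorem~\ref{theorem:KS2}, which forces $\alpha_f(P)$ to equal $|\lambda_i|$ for some eigenvalue $\lambda_i$ of $f^*|_{V_H}$; since $\lambda_1(f)>1$ the value cannot be $1$, and one is done. Your proof is instead purely linear-algebraic and intersection-theoretic: you push $V_H$ down to $\overline{V}_H\subset N^1(X)_\RR$, use the characteristic-polynomial factorization across the short exact sequence to transfer eigenvalues, bound $\rho(f^*|_{\overline{V}_H})$ from above via the containment in $N^1(X)_\RR$ and from below by feeding the Jordan-form growth estimate into the linear functional $D\mapsto D\cdot H^{\dim X-1}$, and conclude that the spectral radius of $f^*|_{\overline{V}_H}$ equals $\lambda_1(f)$ exactly. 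The trade-off is clear: the paper's approach is shorter because it outsources the work to two nontrivial arithmetic results (Zariski density of points of maximal arithmetic degree, and the classification of arithmetic degrees by eigenvalue moduli), whereas yours is more elementary and self-contained, requiring only the finite-dimensionality of $V_H$, the standard spectral-radius identity $\lambda_1(f)=\rho(f^*|_{N^1(X)_\RR})$, and basic operator-norm estimates. Your version also has the virtue of isolating exactly where the intersection-theoretic definition of $\lambda_1(f)$ is used, so it would generalize more transparently to settings where the arithmetic machinery is unavailable. Both arguments are valid; yours would serve as a good alternative proof if one wanted to present the proposition without appealing to the arithmetic-degree theorems.
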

\begin{proof}
Choose an ample divisor $H$ for $X$. Following (\ref{not:VH}) we have a finite dimensional vector space $V_H$. Let $E_1,...,E_p$be a Jordan form for $f^*$ after possibly extending scalars. By (\ref{theorem:MSS}) we can find a point $P$ such that $\alpha_f(P)=\lambda$. However by (\ref{theorem:KS2}) we have that $\lambda=\alpha_f(P)=\mid \lambda_i\mid.$ 
\end{proof}

\begin{corollary}\label{cor:findingeigenvalue1}
Let $f\colon X\ra X$ be a surjective endomorphism of a normal projective variety $X$. Let $\lambda_1(f)=\lambda>1$. Suppose that $\lambda$ is the unique eigenvalue of $f^*$ acting on $N^1(X)_\RR$ of largest absolute value. Then $\lambda$ appears as an eigenvalue of $f^*$ acting on $\Pic(X)_\RR$
\end{corollary}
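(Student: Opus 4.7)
The plan is to reduce to a finite-dimensional $f^*$-stable quotient of $V_H$ sitting inside $N^1(X)_\RR$ and then compare characteristic polynomials. Following Notation~\ref{not:VH}, pick the ample divisor $H$ and form the finite-dimensional $f^*$-invariant subspace $V_H \subseteq \Pic(X)_\RR$. The numerical equivalence quotient $\pi \colon \Pic(X)_\RR \to N^1(X)_\RR$ is $f^*$-equivariant, so $\tilde V_H := \pi(V_H)$ is a finite-dimensional $f^*$-invariant subspace of $N^1(X)_\RR$, and there is a short exact sequence of $f^*$-modules
\[
0 \to K \to V_H \xrightarrow{\pi} \tilde V_H \to 0,
\]
with $K$ the subspace of numerically trivial classes in $V_H$.

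The key step is to show that $\lambda$ appears as an eigenvalue of $f^*|_{\tilde V_H}$. The spectral radius of $f^*|_{\tilde V_H}$ is at most $\lambda$ because $\tilde V_H$ is an $f^*$-stable subspace of $N^1(X)_\RR$. Conversely, the classes $(f^n)^*[H]$ all lie in $\tilde V_H$, and the definition $\lambda_1(f) = \lim_n ((f^n)^*H \cdot H^{\dim X - 1})^{1/n} = \lambda$ combined with the continuity of intersection with $H^{\dim X - 1}$ on the finite-dimensional $\tilde V_H$ forces the growth rate of $(f^n)^*[H]$, in any norm on $\tilde V_H$, to be at least $\lambda$; hence the spectral radius of $f^*|_{\tilde V_H}$ equals $\lambda$. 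Since the eigenvalues of $f^*|_{\tilde V_H}$ are a sub-multiset of those of $f^*|_{N^1(X)_\RR}$, and $\lambda$ is by hypothesis the unique eigenvalue of the latter with absolute value $\lambda$, the eigenvalue of $f^*|_{\tilde V_H}$ realising the spectral radius must be $\lambda$ itself.

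To finish, the short exact sequence above, being one of finite-dimensional vector spaces, produces the characteristic-polynomial factorisation
\[
\chi_{f^*|V_H}(t) = \chi_{f^*|K}(t) \cdot \chi_{f^*|\tilde V_H}(t).
\]
Since $\lambda$ is a root of $\chi_{f^*|\tilde V_H}(t)$, it is a root of $\chi_{f^*|V_H}(t)$, and therefore an eigenvalue of $f^*|_{V_H}$; in particular $\lambda$ is an eigenvalue of $f^*$ acting on $\Pic(X)_\RR$, as required.

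The main obstacle will be the spectral-radius identification on $\tilde V_H$: one must check carefully that the intersection-theoretic definition of $\lambda_1(f)$ translates into a genuine growth estimate for $(f^n)^*[H]$ in a norm on the finite-dimensional $\tilde V_H$, using for instance compactness of the unit sphere and continuity of intersection with $H^{\dim X - 1}$. A second route would be to start from Proposition~\ref{prop:eigenvaluesize}, obtaining some eigenvalue $\lambda'$ of $f^*|_{V_H}$ with $|\lambda'| = \lambda$, and then argue that the image of its (generalised) eigenvector in $\tilde V_H$ cannot vanish under the uniqueness hypothesis, forcing $\lambda' = \lambda$; the spectral-radius argument sketched above has the advantage of avoiding this case distinction.
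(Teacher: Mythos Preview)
Your argument is correct and takes a genuinely different route from the paper's. The paper deduces the corollary in two lines from Proposition~\ref{prop:eigenvaluesize}: it first produces an eigenvalue $\lambda'$ of $f^*$ on $V_H\subseteq\Pic(X)_\RR$ with $|\lambda'|=\lambda$ (this is where the arithmetic machinery enters, via canonical heights and the density result Theorem~\ref{theorem:MSS}), then asserts that $\lambda'$ is also an eigenvalue of $f^*$ on $N^1(X)_\RR$ and concludes $\lambda'=\lambda$ from the uniqueness hypothesis. Your approach bypasses Proposition~\ref{prop:eigenvaluesize} entirely and stays at the level of linear algebra and intersection theory: you compute the spectral radius of $f^*|_{\tilde V_H}$ directly from the growth of $(f^n)^*H\cdot H^{\dim X-1}$, identify the eigenvalue realising it as $\lambda$ using the uniqueness hypothesis on $N^1(X)_\RR$, and then lift to $V_H$ via the characteristic-polynomial factorisation for the short exact sequence. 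This is more elementary---no heights, no dependence on Theorem~\ref{theorem:MSS}---and it also makes explicit a step the paper passes over: the passage from an eigenvalue on $V_H$ to one on $N^1(X)_\RR$ is not automatic (the eigenvector could be numerically trivial), whereas your exact-sequence argument handles the bookkeeping cleanly. The ``second route'' you sketch at the end is essentially the paper's argument, and you have correctly identified the case distinction it leaves implicit.
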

\begin{proof}
By (\ref{prop:eigenvaluesize}) we have that there is an eigenvalue $\lambda^\prime$ of $f^*$ acting on $\Pic(X)_\RR$ of  absolute value $\lambda$. Thus $\lambda^\prime$ is an eigenvalue of absolute value $\lambda$ for the action of $f^*$ on $N^1(X)_\RR$.  However by assumption this means have $\lambda^\prime=\lambda$. 
\end{proof}

The above situation happens in practice.

\begin{theorem}[{\cite[Theorem 6.1]{MR4048444}}]
	Let $f\colon X\ra X$ be a surjective endomorphism of smooth projective varieties. Assume that $\lambda_1(f)^2>\lambda_2(f)$. Then $\lambda_1(f)$ is a simple eigenvalue of $f^*$ and is the only eigenvalue of modulus greater then $\sqrt{\lambda_2(f)}$. 
\end{theorem}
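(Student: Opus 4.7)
The plan is to combine a Perron--Frobenius argument on the nef cone with Hodge-index estimates for the quadratic form $Q(\alpha,\beta)=\alpha\cdot\beta\cdot H^{n-2}$ on $N^1(X)_\RR$, where $n=\dim X$ and $H$ is a fixed ample class. By the Hodge index theorem $Q$ has signature $(1,\rho(X)-1)$, positive on $H$ and negative definite on the $Q$-orthogonal complement of any class with $Q(\alpha,\alpha)>0$.

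First I would produce the Perron eigenvector. Since $f^*$ preserves the nef cone $\Nef(X)$, which is closed convex and of full dimension in $N^1(X)_\RR$, a Perron--Frobenius argument yields a nonzero nef class $\theta$ with $f^*\theta=\lambda_1(f)\theta$. The crucial positivity step is to show $Q(\theta,\theta)=\theta^2\cdot H^{n-2}>0$. If instead $Q(\theta,\theta)=0$, a Cauchy--Schwarz estimate in $Q$ combined with Khovanskii--Teissier applied to $(f^n)^*H$ would force $\lambda_1(f)^{2n}\le Cn^{N_0}\lambda_2(f)^n$, contradicting the hypothesis $\lambda_1(f)^2>\lambda_2(f)$.

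Next I would split $N^1(X)_\RR=\RR\theta\oplus W$ with $W$ the $Q$-orthogonal complement of $\theta$. Because $Q(\theta,\theta)>0$ and $Q$ has signature $(1,\rho(X)-1)$, the form $-Q$ restricts to an inner product on $W$. Although $f^*$ need not preserve this splitting (since $H$ is not $f^*$-invariant), the key quantitative estimate is
\[
|Q((f^n)^*\alpha,(f^n)^*\alpha)|\le C\,n^{N_0}\,\lambda_2(f)^n\,|Q(\alpha,\alpha)|\qquad(\alpha\in W),
\]
obtained by bounding mixed intersection numbers involving $(f^n)^*\alpha$ and $(f^n)^*H$ through Khovanskii--Teissier. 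This pins the spectral radius of $f^*$ restricted to the $W$-direction at $\sqrt{\lambda_2(f)}$.

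Finally, suppose $\mu$ is any eigenvalue of $f^*$ with eigenvector $D\neq 0$ and $|\mu|>\sqrt{\lambda_2(f)}$. Writing $D=c\theta+D_W$ and applying $(f^n)^*$ gives $\mu^n D=c\lambda_1(f)^n\theta+(f^n)^*D_W$; the estimate above shows that the $W$-component of $(f^n)^*D$ grows too slowly to sustain the rate $|\mu|^n$ unless $D_W=0$. Hence $D\in\RR\theta$ and $\mu=\lambda_1(f)$, giving both uniqueness of the dominant eigenvalue and, applied to a generalized eigenvector (which would necessarily contribute a nonzero $W$-component after projecting out $\theta$), simplicity. The main obstacle is the failure of exact $f^*$-invariance of the splitting $\RR\theta\oplus W$, which is precisely what the sharp Khovanskii--Teissier bound above is designed to handle; making this quantitative estimate work over $\bar\QQ$ and without complex-analytic currents is the technical heart of \cite{MR4048444}.
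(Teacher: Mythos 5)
The paper does not prove this statement; it is quoted verbatim as Theorem~6.1 of \cite{MR4048444} and used as a black box. So there is no in-paper proof to compare against, and your attempt has to be judged on its own.

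There is a genuine error at the pivot of your argument: you claim the Perron eigenvector $\theta$ satisfies $Q(\theta,\theta)=\theta^2\cdot H^{n-2}>0$, but the hypothesis $\lambda_1(f)^2>\lambda_2(f)$ forces the opposite, namely $Q(\theta,\theta)=0$. This is easiest to see for surfaces, where $Q$ is the intersection form and $f^*$ scales it exactly: $Q(f^*\theta,f^*\theta)=(\deg f)\,Q(\theta,\theta)=\lambda_2(f)\,Q(\theta,\theta)$, while also $Q(f^*\theta,f^*\theta)=\lambda_1(f)^2Q(\theta,\theta)$. If $Q(\theta,\theta)>0$ these two expressions force $\lambda_1(f)^2=\lambda_2(f)$, contradicting the hypothesis; since $\theta$ is nef one has $Q(\theta,\theta)\geq 0$, hence $Q(\theta,\theta)=0$. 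In higher dimension, where $f^*$ only quasi-preserves $Q$, the same conclusion follows from the Hodge index (reverse Cauchy--Schwarz) inequality $Q(\theta,(f^n)^*H)^2\geq Q(\theta,\theta)\,Q((f^n)^*H,(f^n)^*H)$ together with the bounds $Q(\theta,(f^n)^*H)\lesssim(\lambda_2/\lambda_1)^n$ (using $\theta=(f^n)^*\theta/\lambda_1^n$, $\theta\leq CH$, and $(f^n)^*(H^2)\cdot H^{n-2}\lesssim\lambda_2^n$) and $Q((f^n)^*H,(f^n)^*H)\gtrsim\lambda_2^n$ (Fekete submultiplicativity): these give $Q(\theta,\theta)\lesssim(\lambda_2/\lambda_1^2)^n\ra 0$, so $Q(\theta,\theta)=0$. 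This kills the decomposition $N^1(X)_\RR=\RR\theta\oplus W$ with $W=\theta^\perp$: since $\theta$ is $Q$-isotropic, $\theta\in W$, the sum is not direct, and $Q|_W$ is degenerate rather than negative definite, so the whole orthogonality-based contraction argument collapses.

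The correct geometric picture is the opposite of the one you set up: the dominant eigendirection lies on the isotropic boundary of the light cone, not in its interior. One should complete $\theta$ to a hyperbolic pair, i.e.\ find a second isotropic class $\theta'$ with $Q(\theta,\theta')\neq 0$, split off the hyperbolic plane $\langle\theta,\theta'\rangle$ (on which $f^*$ acts, up to the same quasi-invariance errors, like a Lorentz boost by $\lambda_1/\sqrt{\lambda_2}$), and argue that the residual negative-definite part is controlled by $\sqrt{\lambda_2(f)}$. Your Khovanskii--Teissier/quasi-invariance estimates are aimed at the right technical difficulty, but they need to be deployed against the isotropic splitting rather than the one you proposed.
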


Here $\lambda_2(f)$ is a numerical invariant of $f$ related to how $f$ interacts with codimension 2 sub-varieties. So we should expect that it is often the case that there is a unique eigenvalue of largest absolute value and thus can find an eigendivisor $D$ for $\lambda_1(f)$ for linear equivalence.

\subsection{Analysis of Jordan Blocks}


Here we begin exploring our earlier results about eigendivisors with $\kappa(D)=0$ in the context of Kawaguchi-Silverman and the sAND conjectures. Our main tools will be (\cite[Section 4]{MR4068299}) and (\cite[Theorem 1.8]{2007.15180}) Let $f\colon X\ra X$ be a surjective endomorphism of a normal projective variety over a number field $K$. Suppose that $\lambda_1(f)=\lambda>1$. Choose an ample divisor $H$ of $X$ and follow (\ref{not:VH}). Let $d_H=d=\dim V_H$. After possibly extending scalars let $E_1,...,E_d$ be divisors such that the $E_i$ are a basis for $d$ and the associated matrix of $f^*$ acting on $V_H$ is in Jordan form. Since $\lambda>1$ recall that we have $l=l_H$ such that for $i\leq l$ we have $\mid \lambda_i\mid=\lambda $ and $\mid \lambda_{l+1}\mid<\lambda $.

\begin{definition}\label{def:altsmall}
Using (\ref{not:VH}) define

\[G_{f,H}=G=\{P\in X(K):\hat{h}_{\lambda_i}(P)=0\textnormal{ for }1\leq i\leq l\}\]
\end{definition}

This is the set of points of small height, relevant for the sAND conjecture,see (\ref{conj:sAND}).

\begin{proposition}\label{prop:altsAND}
Let $G_{f,H}$ be as in (\ref{def:altsmall}). Suppose that $G_{f,H}$ is not dense. Then the Kawaguchi-Silverman conjecture holds for $f$.  
\end{proposition}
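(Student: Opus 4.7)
The plan is to prove this by combining the density hypothesis with the Jordan-block analysis of Theorem~\ref{theorem:KS2}. Let $P\in X(\bar\QQ)$ be a point whose forward orbit $\OO_f(P)$ is Zariski dense in $X$; I must show $\alpha_f(P)=\lambda_1(f)$. Theorem~\ref{thm:fundineq} already gives $\alpha_f(P)\leq \lambda_1(f)$, so only the reverse inequality is needed.

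The key observation is that the orbit $\OO_f(P)$ cannot be trapped inside the proper closed set $\overline{G_{f,H}}$: if it were, then the closure of the orbit, which is all of $X$ by the density assumption, would also lie in $\overline{G_{f,H}}$, contradicting the hypothesis. Therefore there exists $n_0\geq 0$ with $f^{n_0}(P)\notin G_{f,H}$, which by Definition~\ref{def:altsmall} means $\hat{h}_{D_i}(f^{n_0}(P))\neq 0$ for some $1\leq i\leq l$. Let $k$ be the smallest index in $\{1,\ldots,\sigma\}$ for which $\hat{h}_{D_k}(f^{n_0}(P))\neq 0$. Because the Jordan basis $D_1,\ldots,D_p$ is ordered by decreasing modulus of eigenvalue, and the indices $\{1,\ldots,l\}$ (those with $|\lambda_i|=\lambda_1(f)$) come first, the existence of a nonvanishing height at some index $\leq l$ forces $k\leq l$ as well. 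In particular $|\lambda_k|=\lambda_1(f)$, and Theorem~\ref{theorem:KS2}(1) then delivers $\alpha_f(f^{n_0}(P))=|\lambda_k|=\lambda_1(f)$.

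To finish, I use that the arithmetic degree is invariant under iteration, i.e.\ $\alpha_f(P)=\alpha_f(f^{n_0}(P))$, which is immediate from the existence of the defining limit in the morphism case (the ratios $h_H^+(f^{n+n_0}(P))^{1/n}$ and $h_H^+(f^{n+n_0}(P))^{1/(n+n_0)}$ share the same limit since $(n+n_0)/n\to 1$). Combining this with the previous paragraph yields $\alpha_f(P)=\lambda_1(f)$, as required. The only point that needs any attention is the indexing claim that the smallest nonvanishing-height index stays within $\{1,\ldots,l\}$; this follows purely from the chosen ordering of eigenvalues and is arguably the only non-cosmetic step. Everything else is a direct appeal to Theorem~\ref{theorem:KS2} and the fact that the Zariski closure of the orbit is all of $X$.
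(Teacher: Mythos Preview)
Your argument is correct and reaches the same conclusion, but it takes a different route from the paper. The paper's proof shows directly that $G_{f,H}$ is forward $f$-invariant: if $P\in G_{f,H}$ then an induction using the Jordan transformation rules $\hat h_{D_i}\circ f=\lambda_i\hat h_{D_i}+\hat h_{D_{i-1}}$ gives $f^n(P)\in G_{f,H}$ for every $n$, so a dense orbit would force $G_{f,H}$ to be dense. Hence $P\notin G_{f,H}$ itself, and one applies Theorem~\ref{theorem:KS2} at $P$ directly. By contrast, you only observe that \emph{some} iterate $f^{n_0}(P)$ lies outside $G_{f,H}$, apply Theorem~\ref{theorem:KS2} at that iterate, and then transport the conclusion back to $P$ via $\alpha_f(P)=\alpha_f(f^{n_0}(P))$.

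Both arguments are valid. Your version is a bit shorter and sidesteps the induction entirely, at the cost of invoking the (easy) iteration-invariance of the arithmetic degree. The paper's version proves slightly more---namely the forward invariance of $G_{f,H}$---which it reuses in the very next corollary to show that $G_{f,H}$ contains no dense orbit if and only if Kawaguchi--Silverman holds. Your approach would still yield that corollary, but you would have to argue separately that if $P\in G_{f,H}$ then $\alpha_f(P)<\lambda_1(f)$ (immediate from Theorem~\ref{theorem:KS2}) and hence no iterate can escape; in effect you would be reproving the invariance you avoided. So the trade-off is brevity here versus a reusable structural fact about $G_{f,H}$.
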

\begin{proof}
 Let $P$ be a point with a dense $f$ orbit. I claim that $P\notin G$. Towards a contradiction suppose that $P\in G$. We show by induction that $f^n(P)\in G$ for all $n$ contradicting that $G$ is not dense in $X$. We have that $\hat{h}_{\lambda_1}(P)=0.$ This gives

\[\hat{h}_{\lambda_1}(f^n(P))=\lambda_1^n\hat{h}_{\lambda_1}(P)=0.\]

Thus for $i\leq l$ we may assume that for all $j<i$ and all $n$ we have that $\hat{h}_{\lambda_j}(f^n(P))=0.$ By assumption we have

\[\hat{h}_{\lambda_i}(P)=0\] and so for $n>1$ we have

\[\hat{h}_{\lambda_i}(f^n(P))=\lambda_i\hat{h}_{\lambda_i}(f^{n-1}(P))+\hat{h}_{\lambda_{i-1}}(f^{n-1}(P))=\lambda_i\hat{h}_{\lambda_i}(f^{n-1}(P))\]

by induction. Since we can repeat this process we have that
\[\hat{h}_{\lambda_i}(f^{n}(P))=0\] Thus if $P\in G$ then $\OO_f(P)\subseteq G$ contradicting that $G$ is not Zariski dense. Thus we have that if $\OO_f(P)$ is dense then $P\notin G$. Then for some $i\leq l$ we have that $\hat{h}_{\lambda_i}(P)\neq 0$. Then (\cite[Section 4]{MR4068299}) shows that $\alpha_f(P)=\mid\lambda_i\mid=\lambda$ as needed completing the proof.   
\end{proof}

The above lemma shows that the Kawaguchi-Silverman conjecture for morphisms is true provided one shows that the canonical Jordan block heights cannot cut out a Zariski dense set. We will use this principle to prove Kawaguchi-Silverman in some cases. We obtain the following rephrasing of the Kawaguchi-Silverman conjecture.

\begin{corollary}
The Kawaguchi-Silverman conjecture for an endomorphism $f$ with $\lambda=\lambda_1(f)>1$ is equivalent to $G_{f,H}$ contains no dense orbit of $f$. 
\end{corollary}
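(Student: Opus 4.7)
My plan is to extract the equivalence directly from Theorem \ref{theorem:KS2} together with the forward invariance of $G_{f,H}$ that is already established inside the proof of Proposition \ref{prop:altsAND}. The key preliminary observation is that $G_{f,H}$ is $f$-invariant: by the recursion $\hat{h}_{\lambda_i}\circ f = \lambda_i \hat{h}_{\lambda_i} + \hat{h}_{\lambda_{i-1}}$, an induction on $i \leq l$ (exactly as in Proposition \ref{prop:altsAND}) shows that $P \in G_{f,H}$ forces $\hat{h}_{\lambda_i}(f^n(P)) = 0$ for every $n \geq 0$ and every $1 \leq i \leq l$. So $\OO_f(P) \subseteq G_{f,H}$ whenever $P \in G_{f,H}$, and in particular the condition ``$\OO_f(P) \not\subseteq G_{f,H}$'' is the same as ``$P \notin G_{f,H}$.''

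For the forward direction, I assume KSC holds for $f$ and suppose for contradiction that there exists $P$ with $\OO_f(P)$ Zariski dense in $X$ and $\OO_f(P) \subseteq G_{f,H}$. Then in particular $P \in G_{f,H}$, so $\hat{h}_{\lambda_i}(P) = 0$ for every $i \leq l$. Part (2) of Theorem \ref{theorem:KS2} applied with this choice of Jordan basis says that $\alpha_f(P) = \lambda_1(f)$ is equivalent to the existence of some $1 \leq i \leq l$ with $\hat{h}_{\lambda_i}(P) \neq 0$ (after possibly grouping in the indices beyond $l$ contributing only smaller eigenvalues). Thus $\alpha_f(P) < \lambda_1(f)$, contradicting KSC.

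For the reverse direction, assume $G_{f,H}$ contains no dense $f$-orbit, and let $P \in X(\bar{\QQ})$ have a Zariski dense orbit. By the forward invariance observation above, $\OO_f(P) \not\subseteq G_{f,H}$ forces $P \notin G_{f,H}$, so there is some minimal $1 \leq k \leq l$ with $\hat{h}_{\lambda_k}(P) \neq 0$. Theorem \ref{theorem:KS2}(1) then gives $\alpha_f(P) = |\lambda_k| = \lambda_1(f)$, which is KSC for this orbit. Since $\alpha_f(P) \leq \lambda_1(f)$ in general by Theorem \ref{thm:fundineq}, this is the conjectured equality.

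There is really no obstacle here: once one has (a) the dichotomy in Theorem \ref{theorem:KS2} between $\alpha_f(P) = \lambda_1(f)$ and all the top canonical heights vanishing, and (b) the forward invariance of $G_{f,H}$, the corollary is just a direct translation of KSC (phrased in terms of dense orbits having maximal arithmetic degree) into the language of the heights $\hat{h}_{\lambda_i}$. The only subtlety worth flagging explicitly is that both implications genuinely need the $f$-invariance of $G_{f,H}$: without it, ``$P \notin G_{f,H}$'' and ``$\OO_f(P) \not\subseteq G_{f,H}$'' would not be interchangeable, and the reverse direction would produce only some iterate $f^n(P)$ with $\alpha_f(f^n(P)) = \lambda_1(f)$ rather than $\alpha_f(P)$ itself.
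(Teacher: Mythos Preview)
Your proof is correct and follows essentially the same approach as the paper: both directions rest on (i) the forward invariance of $G_{f,H}$ established in Proposition~\ref{prop:altsAND}, and (ii) the characterization from Theorem~\ref{theorem:KS2} that $\alpha_f(P)=\lambda_1(f)$ if and only if some $\hat h_{\lambda_i}(P)$ with $1\le i\le l$ is nonzero. The only cosmetic difference is that the paper phrases the forward direction directly rather than by contradiction.
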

\begin{proof}
Suppose the Kawaguchi-Silverman conjecture holds for $f$. If $f$ has no dense forward orbit then trivially $G_{f,H}$ contains no forward orbit. Otherwise let $\OO_f(P)$ be dense. Then by Kawaguchi-Silverman $\alpha_f(P)=\lambda$. By (\cite[Section 4]{MR4068299}) we have that $\alpha_f(P)=\lambda\iff \hat{h}_{\lambda_i}(P)\neq 0$ for some $1\leq i\leq l$. So in particular, $P\notin G_{f,H}$ and so $\OO_f(P)$ is not contained in $G_{f,H}$. On the other hand suppose that $G_{f,H}$ contains no dense orbits. Let $\OO_f(P)$ be a dense orbit, by assumption $\OO_f(P)$ is not contained in $G_{f,H}$. Arguing as in (\ref{prop:altsAND}) we have that $P\notin G_{f,H}$ which means $\hat{h}_{\lambda_i}(P)\neq 0$ for some $1\leq i\leq l$ which means $\alpha_f(P)=\lambda$.
\end{proof}

So we may think of the Kawaguchi-Silverman conjecture as a statement about the structure of the set $G_{f,H}$. In fact, this set $G_{f,H}$ does not depend on $H$ and has been studied recently in (\cite{2002.10976}). Notice that $G_{f,H}=\{P\in X:\alpha_f(P)<\lambda\}$, which is the set of points of small arithmetic degree.  The following is a slight refinement of (\ref{theorem:MSS}).
\begin{proposition}\label{prop:densestatement}
	Let $f\colon X\ra X$ be an endomorphism of a normal projective variety over a number field $K$ with $\lambda_1(f)>1$. Let $H\in \textnormal{Div}(X)$ be ample and take $V_H$ as in (\ref{not:VH}). Let $E_1,...,E_\rho$ be a basis of $f^*\mid_{V_H}$ in Jordan block form. Define
	\[\mathcal{B}=\{P\in X(\bar{K}):\hat{h}_{E_i}(P)\neq 0\textnormal{ for some }1\leq i\leq l\}.\]
	
	Then $\mathcal{B}$ is dense in $X$. 	
\end{proposition}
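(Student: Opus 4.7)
The plan is to identify $\mathcal{B}$ with the set of points of maximal arithmetic degree and then invoke Theorem \ref{theorem:MSS} directly. Concretely, I would argue that
\[
\mathcal{B}=\{P\in X(\bar{K}):\alpha_f(P)=\lambda_1(f)\},
\]
after which the density of $\mathcal{B}$ is an immediate consequence of Theorem \ref{theorem:MSS}.

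To verify this equality, I would use Theorem \ref{theorem:KS2}(1) in both directions. For the inclusion $\mathcal{B}\subseteq\{\alpha_f(P)=\lambda_1(f)\}$, suppose $P\in\mathcal{B}$, so that $\hat{h}_{E_i}(P)\neq 0$ for some $1\leq i\leq l$. Following the ordering of eigenvalues in Notation \ref{not:VH}, we have $|\lambda_1|=\cdots=|\lambda_l|=\lambda_1(f)>|\lambda_{l+1}|\geq\cdots\geq|\lambda_\sigma|>1$, so the smallest index $k\in\{1,\dots,\sigma\}$ with $\hat{h}_{E_k}(P)\neq 0$ satisfies $k\leq l$. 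Theorem \ref{theorem:KS2}(1) then gives $\alpha_f(P)=|\lambda_k|=\lambda_1(f)$.

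For the reverse inclusion, assume $\alpha_f(P)=\lambda_1(f)>1$. By Theorem \ref{theorem:KS2}(1), $\alpha_f(P)$ is either $1$ or equal to $|\lambda_k|$ where $k$ is the smallest index in $\{1,\dots,\sigma\}$ with $\hat{h}_{E_k}(P)\neq 0$. Since $\lambda_1(f)>1$, the first alternative is ruled out, and such a $k$ must exist with $|\lambda_k|=\lambda_1(f)$, forcing $k\leq l$. Hence $P\in\mathcal{B}$.

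There is no real obstacle here: the argument is a bookkeeping exercise that simply unpacks the characterization of the arithmetic degree via canonical Jordan-block heights from \cite{MR4068299} and then uses the Zariski density of points of maximal arithmetic degree from \cite{2007.15180}. The only subtlety is ensuring the smallest index where $\hat{h}_{E_k}$ is non-vanishing lies in the range $\{1,\dots,l\}$ rather than merely in $\{1,\dots,\sigma\}$, which is forced by the ordering of the eigenvalues.
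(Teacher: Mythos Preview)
Your proposal is correct and follows essentially the same approach as the paper: both arguments combine Theorem~\ref{theorem:MSS} with the characterization of $\alpha_f(P)$ via the canonical Jordan-block heights from Theorem~\ref{theorem:KS2}. The only cosmetic difference is that you establish the full equality $\mathcal{B}=\{P:\alpha_f(P)=\lambda_1(f)\}$, whereas the paper argues by contradiction and in effect only uses the inclusion $\{P:\alpha_f(P)=\lambda_1(f)\}\subseteq\mathcal{B}$, which is all that is needed.
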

\begin{proof}
Suppose that $\mathcal{B}\subseteq Y$ where $Y$ is closed. Then choose $P\notin Y$ with $\alpha_f(P)=\lambda_1(f)$ using (\ref{theorem:MSS}). Since $P\notin \mathcal{B}$ we have that

\[\hat{h}_{E_i}(P)=0\]	

for all $1\leq i\leq l$. If for some $l+1\leq i\leq \sigma$ we have $\hat{h}_{E_i}(P)\neq 0$ then by choosing $i$ minimal and applying (\cite[Section 4]{MR4068299}) we have

\[\alpha_f(P)=\mid \lambda_i\mid <\lambda\]

a contradiction. So we have that for all $i\leq \sigma$ we have $\hat{h}_{E_i}(P)=0$ which by (\cite[Section 4]{MR4068299}) gives $\alpha_f(P)=1$ which is again impossible. 
\end{proof}

\begin{proposition}
\label{prop:LargeJordanBlock}
Let $f\colon X\ra X$ be an endomorphism of a normal projective variety over a number field $K$ and use the notation of (\ref{prop:densestatement}). Suppose that for some $1\leq i\leq l$ we have that $E_i$ is an $\QQ$- divisor class with $\kappa(E_i)>0$. Then Kawaguchi-Silverman and the sAND conjecture holds for $f$. 
\end{proposition}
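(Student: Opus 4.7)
My plan is to deduce both conjectures from a single non-density statement. For each $N\ge 1$ set
\[T_N=\{P\in X(\bar{\QQ}):[K(P):\QQ]\le N,\ \hat{h}_{E_i}(P)=0\}.\]
It suffices to show that $T_N$ is not Zariski dense in $X$. Indeed, Theorem \ref{theorem:KS2} guarantees that every $P$ with $\alpha_f(P)<\lambda_1(f)$ satisfies $\hat{h}_{E_j}(P)=0$ for all $1\le j\le l$, and in particular $\hat{h}_{E_i}(P)=0$. Thus $\McS(X,f,N)\subseteq T_N$, giving the sAND conjecture for $f$ at once. For the Kawaguchi--Silverman conjecture, let $P\in X(\bar{\QQ})$ have Zariski dense $f$-orbit of degree $N=[K(P):\QQ]$; since $\alpha_f$ is constant on orbits, if $\alpha_f(P)<\lambda_1(f)$ then the whole orbit $\OO_f(P)$ sits inside $T_N$, contradicting non-density.

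To establish non-density of $T_N$, I use two inputs. First, since $i\le l$ forces $|\lambda_i|=\lambda_1(f)>1$, Theorem \ref{thm:canheight2} applied to the Jordan chain in $V_H$ containing $E_i$ provides the canonical height $\hat{h}_{E_i}$ with $\hat{h}_{E_i}=h_{E_i}+O(1)$ for any Weil height $h_{E_i}$; thus $T_N\subseteq\{P:[K(P):\QQ]\le N,\ h_{E_i}(P)\le C\}$ for a constant $C$ independent of $P$. Second, because $\kappa(E_i)>0$, I choose $m\gg 0$ so that the rational map $\phi_m\colon X\dashrightarrow Y\subseteq\PP^{N_m}$ associated with $|mE_i|$ is defined off $\mathrm{Bs}(|mE_i|)$ and satisfies $\dim Y\ge 1$. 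Writing $mE_i\sim\phi_m^{*}\OO(1)+B$ with $B$ the fixed part of $|mE_i|$, the functoriality of Weil heights yields
\[m\,h_{E_i}(P)=h_{\OO(1)}(\phi_m(P))+h_B(P)+O(1).\]
For $P\notin\mathrm{Bs}(|mE_i|)$ the term $h_B(P)$ is bounded below (as $B$ is effective), so a bound $h_{E_i}(P)\le C$ entails a bound on $h_{\OO(1)}(\phi_m(P))$. Northcott's theorem on $\PP^{N_m}$, applied to points of degree at most $N$, then forces $\phi_m(P)$ to lie in a finite set $S\subset Y$, so
\[T_N\subseteq\mathrm{Bs}(|mE_i|)\cup\phi_m^{-1}(S).\]
Because $\dim Y\ge 1$, every fibre of $\phi_m$ has dimension at most $\dim X-1$, and $\mathrm{Bs}(|mE_i|)$ is a proper closed subset of $X$, so the right-hand side is a proper Zariski-closed subset of $X$, completing the argument.

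The main technical step is this Iitaka-plus-Northcott reduction, together with the small but necessary observation that the fixed part $B$ contributes a height bounded below off its support; once these are in place the rest is bookkeeping combining Theorems \ref{thm:canheight2} and \ref{theorem:KS2}. It is worth emphasising that the hypothesis does not require $E_i$ to be a genuine $f^{*}$-eigenvector: the Jordan block property is sufficient because Theorem \ref{thm:canheight2} supplies $\hat{h}_{E_i}$ with exactly the same $O(1)$-comparability to a Weil height as in the pure eigenvector case handled by Proposition \ref{prop:Lotsofsections}, and this comparability is the only dynamical input the argument really uses.
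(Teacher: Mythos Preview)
Your argument is correct and follows essentially the same route as the paper: both reduce to showing that the zero locus of $\hat{h}_{E_i}$ (intersected with points of bounded degree) is not Zariski dense, and then invoke the Iitaka--Northcott argument underlying Proposition~\ref{prop:Lotsofsections}. The paper's proof simply cites \cite[Proposition~3.5]{MR4070310} as a black box, whereas you unpack that proof explicitly; your final paragraph is a worthwhile observation, since Proposition~\ref{prop:Lotsofsections} is stated only for genuine eigenvectors, and it is the $O(1)$-comparability from Theorem~\ref{thm:canheight2} that makes the extension to generalized eigenvectors go through.
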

\begin{proof}
The sAND conjecture is the same as $G_{f,H}$ as defined in (\ref{def:altsmall}) not being dense. Notice that

\[G_{f,H}\subseteq \{P\in X(K): \hat{h}_{E_i}(P)=0\}\]

by the definition of $G_{f,H}$. By  (\cite[Proposition 3.5]{MR4070310}) we have that $\{P\in X(K): \hat{h}_{E_i}(P)=0\}$ is not Zariski dense and the result follows.

\end{proof}

\section{Some applications applications of eigendivisors with few sections}

\subsection{The case of a finitely generated nef cone}
	
Let $X$ be a normal projective variety defined over a number field $K$ with a finitely generated (not necessarily rational) nef cone, and Picard number $\rho(X)=\rho$. Suppose that we are given a $f\colon X\ra X$ is a surjective endomorphism. Suppose that the nef cone of $X$ has rays $v_1,...,v_s$. Since the action of $f^*$ is a linear isomorphism at the level of vector spaces that preserves the nef cone it preserves the boundary and so acts as a permutation on the rays. After iterating $f^*$ we may assume that $f^*$ fixes all of the rays. Since the nef cone is a full dimensional pointed cone, we have that $s\geq \rho$, and our assumption means that

\[f^*v_i=\lambda_i v_i.\]

In particular, by taking a linearly independent set of rays say $v_1,...,v_\rho$ we have that the action of $f^*$ on $N^1(X)_\RR$ is diagonalizable over $\RR$. Furthermore if $\mu$ is any eigenvalue of $f^*$ then $\mu=\lambda_i$ for some $i$ and the $\mu$ eigenspace has a basis 

\[\{v_j:\lambda_j=\mu\}.\]

To see why let $w$ be any $\mu$ eigenvector. Then we can write

\[w=\sum_{i=1}^{\rho}t_iv_i\textnormal{ and }\sum_{i=0}^{\rho}\mu t_iv_i=\mu w=f^*w=\sum_{i=1}^{\rho}\lambda_it_iv_i\]

So \[\mu t_i=\lambda_i t_i\]

Since since some $t_i\neq 0$ we see that $\lambda_i=\mu$ for some $i$ and that $t_j\neq 0\Rightarrow \lambda_j=\mu$. 

\begin{definition}
	Let $T\colon V\ra V$ be an invertible linear transformation of a finite dimensional real vector space that is diagonalizable. Let $C$ be a full dimensional pointed closed cone in $V$ with $T(C)=C$ and the rays of $V$ contain a basis of eigenvectors for $T$. Then given an eigenvalue $\lambda$ we say $C$ separates the $\lambda$-eigenspace if there exists $\lambda$-eigenvectors $v,w$ with $v$ and $w$ not lying on a common face.   
\end{definition}

The significance of the above definition follows from the following non-trivial result of convex geometry/spectral theory.

\begin{theorem}[{\cite[Theorem 4.8]{MR1832167}}]\label{thm:conethm}
	Let $A$ be a real $n\times n$ matrix. Then the following are equivalent.
	
	\begin{enumerate}
		\item $A$ is non-zero, diagonalizable, and all eigenvalues of $A$ have the same modulus, with $\rho(A)$ being an eigenvalue.
		\item There is a proper cone $K$ in $\RR^n$ with $A(K)\subseteq K$ and $A$ has an eigenvalue in the interior of $K$.
		
	\end{enumerate}
\end{theorem}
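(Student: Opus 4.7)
The plan is to prove the two implications separately, with $(1)\Rightarrow(2)$ handled by an explicit cone construction around an eigenvector and $(2)\Rightarrow(1)$ handled by cone-preserving spectral theory combined with an iteration argument.

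For $(1)\Rightarrow(2)$, after rescaling assume $\rho(A)=1$. Since $A$ is real, diagonalizable, and has all eigenvalues on the unit circle, its real Jordan form consists of $\pm 1$ blocks and $2\times 2$ planar rotation blocks, so $A$ is conjugate over $\RR$ to an orthogonal transformation. Fix an $A$-invariant inner product $\langle\cdot,\cdot\rangle$ making $A$ orthogonal and pick a $1$-eigenvector $v$; then $A$ preserves $v^{\perp}$. For any $c\in(0,1)$ set
\[ K := \{\, tv + w : t \geq 0,\ w \in v^{\perp},\ \|w\| \leq tc \,\}. \]
One checks directly that $K$ is closed, convex, pointed, and full-dimensional, that $v$ lies in its interior, and that $A(K)\subseteq K$ follows from $Av=v$ together with $\|Aw\|=\|w\|$ on $v^{\perp}$.

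For $(2)\Rightarrow(1)$, the Perron--Frobenius theorem for proper cones (Birkhoff--Vandergraft) tells us that $\rho(A)$ is an eigenvalue of $A$ with an eigenvector in $K$, so in particular $A\neq 0$. Normalize $\rho(A)=1$; the interior eigenvector $v$ has nonnegative eigenvalue $\lambda$ and, since $A^n v = \lambda^n v$ must remain bounded away from $\partial K$, one argues $\lambda=1=\rho(A)$. The remaining spectral content is then extracted by the following iteration principle: for any $u\in\RR^n$ the hypothesis $v\in\mathrm{int}(K)$ gives $v\pm\epsilon u\in K$ for small $\epsilon>0$, hence $v+\epsilon A^n u\in K$ for every $n\geq 0$. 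Decomposing $u$ along the spectral subspaces of $A$, normalizing by the growth rate of $A^n u$, and extracting convergent directions by compactness of the unit sphere, one transfers the asymptotic directions of $\{A^n u\}$ into $K$ from both signs. Pointedness of $K$ then forces these directions to vanish unless $u$ lies in the $\rho(A)$-eigenspace, which simultaneously rules out eigenvalues of modulus strictly greater than $\rho(A)$ and forbids any Jordan block of size $>1$ at the peripheral spectrum, giving diagonalizability there.

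The main obstacle is the last piece: ruling out eigenvalues of modulus strictly \emph{less} than $\rho(A)$. The iteration trick above is insensitive to contracting directions, since $A^n u \to 0$ for $u$ in any such eigenspace, so one does not obtain a contradiction from $v+\epsilon A^n u\in K$. To close this gap I would pass to the dual cone $K^{*}$, on which the transpose $A^T$ also acts as a cone-preserving map, and use the pairing $\langle v,\varphi\rangle > 0$ for all nonzero $\varphi\in K^{*}$ (a consequence of $v\in\mathrm{int}(K)$) to tie the spectra of $A$ and $A^T$ together via the structure theorems on the core of a cone-preserving map from \cite{MR1832167}. This duality step is the genuine technical heart of the equivalence.
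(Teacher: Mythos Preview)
The paper does not prove this theorem at all: it is quoted verbatim as \cite[Theorem 4.8]{MR1832167} and used as a black box in the proofs of Proposition~\ref{prop:dialation} and Proposition~\ref{prop:nef=peff}. There is therefore no ``paper's own proof'' to compare your attempt against.

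That said, a brief comment on your sketch itself. The direction $(1)\Rightarrow(2)$ via an ice-cream cone around a $\rho(A)$-eigenvector in an $A$-invariant inner product is correct and standard. In $(2)\Rightarrow(1)$, your iteration argument $v\pm\epsilon u\in K\Rightarrow A^n(v\pm\epsilon u)\in K$, suitably normalized, does establish that the interior eigenvalue equals $\rho(A)$, that no eigenvalue has modulus exceeding $\rho(A)$, and that the peripheral spectrum is semisimple. The remaining assertion---that there are \emph{no} eigenvalues of modulus strictly less than $\rho(A)$---is, as you yourself flag, not handled by this argument, and your proposed fix via the dual cone $K^{*}$ is incomplete: you would need an eigenvector of $A^{T}$ in $\mathrm{int}(K^{*})$, which does not follow merely from $v\in\mathrm{int}(K)$. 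This is exactly the step for which the structural machinery of \cite{MR1832167} (the ``core'' of a cone-preserving map) is invoked, and it is genuinely the nontrivial half of the equivalence. So your write-up is an honest outline with the hard part correctly isolated but left open, which is precisely why the present paper cites the result rather than reproving it.
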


\begin{proposition}\label{prop:dialation}
	Let $X$ be a normal projective variety defined over a number field $K$ with a finitely generated (not necessarily rational) nef cone. Suppose that we are given a surjective endomorphism $f\colon X\ra X$. Suppose that $f^*$ preserves the rays of the nef cone $\Nef(X)_\RR$ and has positive eigenvalues. (This can always be achieved after iterating $f$) Then $\Nef(X)_\RR$ separates a $\lambda$-eigenspace for some eigenvalue $\lambda$ of $f^*$ if and only if $f^*$ is a dilation
\end{proposition}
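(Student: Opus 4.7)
The plan is to pivot on Theorem~\ref{thm:conethm}, converting the separation hypothesis into the statement that $f^*$ has a $\lambda$-eigenvector in the interior of the proper cone $\Nef(X)_\RR$. The geometric bridge is the following standard fact about faces of closed convex cones: a face $F$ of $C$ is extremally closed in the sense that $x+y \in F \Rightarrow x,y \in F$ whenever $x,y \in C$. Consequently, the smallest face of $C$ containing a sum $v+w$ coincides with the smallest face containing both $v$ and $w$. In particular, if $v, w \in C$ share no common proper face, then $v+w$ lies in no proper face, and hence in the interior of $C$ (since $\Nef(X)_\RR$ is full-dimensional and pointed).

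For the forward direction, suppose $\lambda$-eigenvectors $v, w$ witness separation. By the observation above, $v+w$ is a $\lambda$-eigenvector lying in the interior of $\Nef(X)_\RR$. Since $f^*(\Nef(X)_\RR) \subseteq \Nef(X)_\RR$, Theorem~\ref{thm:conethm} applies to $A = f^*$ with $K = \Nef(X)_\RR$, yielding that every eigenvalue of $f^*$ has the same modulus, namely the spectral radius $\rho(f^*)$. Since $\lambda$ is itself an eigenvalue and $\lambda > 0$ (all eigenvalues are positive by hypothesis), this common modulus must be $\lambda$, and the positivity hypothesis then forces every eigenvalue to equal $\lambda$. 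Combined with the diagonalizability of $f^*$ already established earlier in the section (the rays of the nef cone contain a basis of eigenvectors), this gives $f^* = \lambda \cdot \iden$; that is, $f^*$ is a dilation.

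Conversely, if $f^* = \lambda \cdot \iden$, then every vector in $N^1(X)_\RR$ is a $\lambda$-eigenvector. Taking $v$ to be any extremal ray of $\Nef(X)_\RR$ and $u := v_1 + \cdots + v_\rho$ to be the sum of a linearly independent family of extremal rays, the vector $u$ lies in the interior of $\Nef(X)_\RR$ and is therefore contained in no proper face. Hence $v$ and $u$ share no common proper face and witness separation of the $\lambda$-eigenspace. The only subtle step in the whole argument is the convex-geometric lemma underlying the forward direction; everything else follows directly from Theorem~\ref{thm:conethm} and the diagonalizability already established.
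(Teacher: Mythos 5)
Your proof is correct and follows the same route as the paper: reduce separation to the existence of a $\lambda$-eigenvector in the interior of $\Nef(X)_\RR$, then invoke Theorem~\ref{thm:conethm} to conclude all eigenvalues share modulus $\lambda$, hence coincide by positivity, hence $f^*=\lambda\cdot\iden$ by diagonalizability. The only difference is that you spell out the two steps the paper leaves implicit — the convex-geometric fact that faces are extremally closed (so $v+w$ escapes every proper face), and the explicit witness pair for the converse — so this is the paper's argument with the details filled in.
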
	
\begin{proof}
	If $f^*$ is a dilation by $\lambda$ then it certainly separates a $\lambda$-eigenspace. Now suppose that $f^*$ separates a $\lambda$-eigenspace for some eigenvalue $\lambda$. There are $\lambda$-eigenvectors $v,w$ that do not lie on the same face of $\Nef(X)_\RR$. Thus we have that $v+w$ is a $\lambda$-eigenvector on the interior of the nef cone. By (\ref{thm:conethm}) we have that all eigenvalues of $f^*$ have the same modulus. Since $f^*$ has real eigenvalues that are positive, all the eigenvalues coincide and $f$ is a dilation by $\lambda$ as needed. 
\end{proof}

We see that the obstruction to Kawaguchi-Silverman when the nef cone is finitely generated is that the eigenvectors of $f^*$ accumulate on a single facet of the nef cone. The philosophy is that as the nef cone of a variety gets more complicated, it becomes more and difficult for an endomorphism to preserve the nef cone unless the endomorphism is something like a dilation. We make this notion precise in the context of Kawaguchi-Silverman.

\begin{theorem}
Let $X$ be a normal projective variety defined over $\bar{\QQ}$. Suppose that $X$ has a finitely generated nef cone and $\rho=\rho(X)$ is the Picard number of $X$. Let $s$ be the number of rays of $\textnormal{Nef}(X)$. Let $f\colon X\ra X$ be a surjective endomorphism and let $t$ be the number of distinct eigenvalues of $f^*$ and let $q$ be the maximal number of rays in a facet of $\Nef(X)$. If $s\geq tq+1$ then some iterate $f^n$ has $(f^n)^*$ acting by a dilation and in particular Kawaguchi-Silverman and the sAND conjectures hold for $X$. In particular if $\rho=3$ then if $s\geq 3(3-1)+1=7$ the conclusion holds.
\end{theorem}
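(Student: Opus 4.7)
The plan is to combine the pigeonhole principle with Proposition \ref{prop:dialation} to force some iterate of $f^*$ to act as a dilation on $N^1(X)_\RR$, and then invoke Corollary \ref{cor:findingeigenvalue1} together with Propositions \ref{prop:yohsuke} and \ref{prop:LargeJordanBlock}.

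First I would replace $f$ by a sufficiently high iterate $g = f^m$ so that $g^*$ fixes each of the $s$ extremal rays $v_1, \ldots, v_s$ of $\Nef(X)_\RR$, which is possible because $f^*$ permutes this finite set. Writing $g^* v_i = \mu_i v_i$, we have $\mu_i > 0$, and since the rays span $N^1(X)_\RR$ the operator $g^*$ is diagonalizable over $\RR$. The map $x \mapsto x^m$ is injective on the positive reals, so $g^*$ has at most $t$ distinct eigenvalues. Labelling each ray by its associated eigenvalue and using the hypothesis $s \geq tq + 1$, the pigeonhole principle forces some eigenvalue $\mu$ to be shared by at least $q + 1$ rays. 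Any extremal ray of $\Nef(X)_\RR$ lying on a facet is also extremal in that facet, and each facet carries at most $q$ such rays by hypothesis; hence the $q + 1$ $\mu$-eigenvector rays cannot all lie in one facet, so we may pick two of them, $v$ and $w$, not on a common facet. Proposition \ref{prop:dialation} then forces $g^*$ to be a dilation by $\mu$ on $N^1(X)_\RR$.

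With $g^*$ a dilation, $\mu = \lambda_1(g)$ is the unique eigenvalue of $g^*$ on $N^1(X)_\RR$ of maximal absolute value, so Corollary \ref{cor:findingeigenvalue1} produces a non-trivial $D \in \Pic(X)_\QQ$ with $g^* D \sim_\QQ \mu D$. Proposition \ref{prop:yohsuke} then gives the Kawaguchi-Silverman conjecture for $g$, and this passes to $f$ via the identities $\alpha_{f^m}(P) = \alpha_f(P)^m$ and $\lambda_1(f^m) = \lambda_1(f)^m$, using that any dense $f$-orbit has some shift $f^i(P)$ with dense $f^m$-orbit (by irreducibility of $X$ and properness of $f^i$) and that $\alpha_f$ is constant along forward orbits. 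For sAND, Proposition \ref{prop:LargeJordanBlock} applies once one has an eigendivisor with $\kappa(D) > 0$; this can be arranged by choosing $D$ to have ample numerical class, using that the $\mu$-eigenspace of $g^*$ on $N^1(X)_\RR$ is everything and that the generalized-eigenspace decomposition is exact along $0 \to \Pic^0(X)_\RR \to \Pic(X)_\RR \to N^1(X)_\RR \to 0$. Since ampleness is a numerical condition on normal projective varieties, such a $D$ is ample and $\kappa(D) = \dim X > 0$. The $\rho = 3$ consequence is immediate from the bounds $t \leq \rho = 3$ and $q = 2$ (every facet of a three-dimensional pointed polyhedral cone is a two-dimensional face with exactly two extreme rays).

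The main obstacle is the bridge between numerical and linear equivalence in the last step. The dilation property is an assertion on $N^1(X)_\RR$, whereas the canonical heights of Kawaguchi and Silverman require an honest eigendivisor in $\Pic(X)_\QQ$; Corollary \ref{cor:findingeigenvalue1} handles this for the Kawaguchi-Silverman conjecture, but for sAND one must additionally promote the eigendivisor to one with positive Iitaka dimension, which requires tracking the Jordan structure of $g^*$ on the numerically trivial part $\Pic^0(X)_\RR$ and verifying that the generalized $\mu$-eigenspace on $\Pic(X)_\RR$ surjects onto $N^1(X)_\RR$.
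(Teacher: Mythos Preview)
Your core argument --- iterate so that $g^*=(f^m)^*$ fixes every extremal ray, observe that $g^*$ then has at most $t$ distinct (positive real) eigenvalues, apply pigeonhole to find an eigenvalue carried by more than $q$ rays, and feed two such rays not sharing a facet into Proposition~\ref{prop:dialation} --- is exactly the paper's approach, phrased contrapositively. The one step you and the paper both pass over quickly is the implication ``$q+1$ rays with the same eigenvalue cannot all lie in a single facet, hence two of them share no common facet''; as written this is not a purely combinatorial fact about cones, and it is worth recording that what is really being used is the contrapositive of Proposition~\ref{prop:dialation} itself.

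Where you diverge is in the deduction of the Kawaguchi--Silverman and sAND conclusions, and here you take an unnecessarily indirect route. Once $(f^n)^*$ is multiplication by $\mu$ on $N^1(X)_\RR$, any ample $H$ satisfies $(f^n)^*H\equiv_{\textnormal{Num}}\mu H$, and Theorem~\ref{thm:canheight1} applies directly in its numerical form: the canonical height $\hat h_H$ exists, equals $h_H+O(\sqrt{h_H^+})$, and vanishes exactly on preperiodic points since $H$ is ample. This gives both conjectures for $f^n$ immediately (for sAND, $\hat h_H=0$ forces $h_H$ bounded, so Northcott yields only finitely many such points of bounded degree), and hence for $f$. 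There is no need to pass through Corollary~\ref{cor:findingeigenvalue1} or Proposition~\ref{prop:yohsuke}, nor to analyse the Jordan structure on $\Pic^0(X)_\RR$; the ``main obstacle'' you identify is an artefact of routing the argument through $\Pic(X)_\QQ$ rather than working numerically with an ample class from the start.
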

\begin{proof}
After iterating $f$ we may assume that $f^*$ fixes the rays of the nef cone and has positive eigenvalues. Let $\lambda$ be an eigenvalue of $f^*$ acting on $N^1(X)_\RR$. Towards a contradiction suppose that $f^*$ is not acting by a dilation. Suppose that an eigenvalue $\lambda$ has eigenvector $v_\lambda$ on a facet $F$ of $\Nef(X)$. If $\lambda$ appears as an eigenvalue of a ray on a different facet $F^\prime$ of $\Nef(X)$ we have that $\lambda$ appears as the eigenvalue of a ray in $F\cap F^\prime$, otherwise (\ref{prop:dialation}) implies that $f^*$ acts by a $\lambda$ dilation contradicting our assumptions. Thus there are at most $q$ rays that are an eigenvector for $\lambda$. Since there are $t$ eigenvalues we have that there are at most $tq$ rays, a contradiction. If $\rho=3$ then a facet is 2 dimensional cone, and thus has $2$ rays. So $q=2$ and $t\leq 3$. 
\end{proof}

\subsection{Varieties with a good eigenspace}\label{sec:effnefcone}

We recall the following situation which is the crux of the Kawaguchi-Silverman and sAND conjecture for varieties admitting an int-amplified endomorphism.  

\begin{definition}[{\cite{1908.01605}}]\label{def:CaseTIR}
	Let $X$ be an $n$-dimensional normal $\QQ$-factorial projective variety with at worst klt singularities that admits an int-amplified endomorphism. Let $f\colon X\ra X$ be a surjective endomorphism. The following situation is called case $\textnormal{TIR}_n$.  We assume the following conditions.
	
	\begin{enumerate}
		\item The anti canonical class $-K_X$ is nef and not big with $\kappa(X,-K_X)=0$.
		\item $f^*D=\lambda D$ in $\Pic(X)_\QQ$ for some effective irreducible $\QQ$-divisor $D$ with $D$ linearly equivalent to $-K_X$ and $\kappa(D)=0$. 
		\item The ramification divisor of $f$ is $\textnormal{Supp}(D)$.
		\item There is an $f$-invariant Mori fiber space 
		
		\[\xymatrix{X\ar[r]^f\ar[d]_\phi& X\ar[d]^\phi \\Y\ar[r]_g & Y}\]
		
		with $\lambda_1(f)>\lambda_1(g)$. 
		\item $\dim X\geq \dim Y+2\geq 3$. 
	\end{enumerate} 
\end{definition}

\begin{question*}[{\cite[Question 1.8]{1908.01605}}]
	Does case $\textnormal{TIR}_n$ ever occur?
\end{question*}

The interest in this technical case is that it is the remaining obstacle to performing an $f$ invariant minimal model program to prove the Kawaguchi-Silverman and sAND conjectures. Notice that in this case, it follows that the eigenspace of $\lambda_1(f)$ is 1-dimensional by the condition $\lambda_1(g)<\lambda_1(f)$. 

\begin{theorem}[Theorem 1.7,\cite{1908.01605}]
Let $X$ be a normal $\QQ$-factorial projective variety with at worst klt singularities that admits an int-amplified endomorphism. If the Kawaguchi-Silverman conjecture holds for the varieties appearing in case $\textnormal{TIR}_n$ (perhaps vacuously) then the Kawaguchi-Silverman conjecture holds for $X$.
\end{theorem}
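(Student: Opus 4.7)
The plan is to prove the theorem by induction on the pair $(\dim X, \rho(X))$ ordered lexicographically, running an $f$-equivariant minimal model program on $X$. The base case $\rho(X) = 1$ is already covered by the Picard-number-one instance of KSC cited in the introduction, and $\dim X = 0$ is vacuous, so the inductive step is all that is genuinely at issue.

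For the inductive step I would first replace $f$ by an iterate. This is harmless, since $\alpha_{f^n}(P) = \alpha_f(P)^n$ and $\lambda_1(f^n) = \lambda_1(f)^n$, so the statement of KSC is unchanged. Using the theory of int-amplified endomorphisms on $\QQ$-factorial klt varieties, after iteration one may assume $f_\ast$ fixes each $K_X$-negative extremal ray $R$, and then the extremal contraction lemma (the analogue of \cite[Lemma 3.6]{1902.06071} quoted above) produces $g \colon Y \to Y$ commuting with $\phi_R \colon X \to Y$.

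The argument now splits by the type of $\phi_R$. If $\phi_R$ is divisorial, then $Y$ is again $\QQ$-factorial klt with induced int-amplified $g$ and $\rho(Y) = \rho(X)-1$, and one verifies $\lambda_1(f)=\lambda_1(g)$; the inductive hypothesis and Corollary \ref{cor:standard1} then give KSC for $f$. If $\phi_R$ is small, one passes instead to an $f$-equivariant flip (after iterating again) and continues. If $\phi_R$ is of Mori fiber type with $\dim Y < \dim X$, then whenever $\lambda_1(f) = \lambda_1(g)$ the inductive hypothesis on $(Y,g)$ and Corollary \ref{cor:standard1} again conclude. The only residual configuration is a Mori fiber space with $\lambda_1(f) > \lambda_1(g)$.

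In this residual configuration, I would analyze the anti-canonical class. Since $X$ admits an int-amplified endomorphism, $-K_X$ is pseudo-effective, and $f^\ast$ acts on $-K_X$ with a controlled eigenvalue. If $\kappa(X,-K_X)>0$ or if $-K_X$ is big, one can extract a non-trivial $\QQ$-eigendivisor for $\lambda_1(f)$ with positive Iitaka dimension and conclude by Proposition \ref{prop:yohsuke}. The remaining possibility is that $-K_X$ is nef but not big with $\kappa(X,-K_X) = 0$; combined with the structural properties forced by $\lambda_1(f) > \lambda_1(g)$ and standard Meng--Zhang-type results on ramification divisors of int-amplified endomorphisms, this lands precisely in case $\textnormal{TIR}_n$, which is handled by the hypothesis. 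The main obstacle is exactly this final dichotomy: establishing that once $\lambda_1(f) > \lambda_1(g)$ for every Mori contraction and all divisorial/flipping steps have been run, the $\lambda_1(f)$-eigenspace on $\Pic(X)_\QQ$ is one-dimensional, generated by a multiple of $-K_X$, and supported on the ramification divisor. This requires the fine structure theory for int-amplified endomorphisms of $\QQ$-factorial klt varieties (invariance of ramification, rigidity of the top eigenspace), and it is this classification, rather than any conceptually new idea, that does the real work.
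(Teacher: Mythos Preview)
The paper does not give its own proof of this statement: it is quoted verbatim as Theorem 1.7 of \cite{1908.01605} and used only as motivation for introducing the good-eigenspace condition. There is therefore no proof in the paper to compare your proposal against.

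That said, your outline is broadly the shape of the argument in the cited source: run an $f$-equivariant MMP after iteration, descend through divisorial contractions and flips while preserving the int-amplified hypothesis, and in the Mori-fiber case split on whether $\lambda_1(f)=\lambda_1(g)$ or $\lambda_1(f)>\lambda_1(g)$, with the latter eventually forcing the $\textnormal{TIR}_n$ configuration. Two places deserve more care if you ever write this out in full. First, the induction is not simply on $(\dim X,\rho(X))$ lexicographically, since a flip does not decrease $\rho$; one needs termination of flips in the klt setting together with a separate bookkeeping argument. Second, the reduction in the residual Mori-fiber case to $-K_X$ nef, not big, with $\kappa(X,-K_X)=0$ and ramification supported on an irreducible $D\sim -K_X$ is the genuinely delicate part of \cite{1908.01605}, and your sketch glosses over how one rules out $\kappa(X,-K_X)=-\infty$ and why the $\lambda_1(f)$-eigenspace is one-dimensional and spanned by an effective class. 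These are exactly the structural results of Meng--Zhang and Matsuzawa--Yoshikawa that you allude to, but they are doing essentially all the work, not merely filling in details.
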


We now give a variant of $\textnormal{TIR}_n$ that takes into account the possibility of a $\lambda_1(f)$ eigenspace of dimension greater then 1. 

\begin{definition}\label{def:Jordan Block}
	Let $f\colon X\ra X$ be a surjective endomorphism of a normal projective variety over a number field $K$ with $\lambda_1(f)>1$. Let $H$ be an integral eigendivisor and let $V_H$ be as is (\ref{not:VH}). We say $f^*\mid_{V_H}$ has a good eigenspace if $f^*\mid_{V_H}$ has the following properties,
	
	\begin{enumerate}
		\item  $\lambda$ is the unique eigenvalue of absolute value $\lambda$ of $f^*\mid_{V_H}$.
		\item The multiplicity of all $\lambda$ Jordan blocks of $f^*\mid_{V_H}$ are of multiplicity 1.
		\item If the Jordan blocks of $f^*\mid_{V_H}$ associated to $\lambda$ can be taken to be integral nef divisor classes $D_1,...,D_l$. 
		\item There is some $1\leq i\leq l$ such that $\kappa(D_i)\neq 0$. 
	\end{enumerate}

\end{definition} 

The benefit of this definition is that it is relatively simple, but has interesting consequences. It is our hope that the definition will motivate additional research into which varieties have surjective endomorphisms with a good eigenspace.

\begin{proposition}\label{thm:effnefcone}
Let $X$ be a normal projective variety with finitely generated rational nef cone. Suppose that for every nef divisor class $D$ in $\Pic(X)_\QQ$ there is an integer $m\geq 1$ such $mD$ is effective. If $f\colon X\ra X$ is a surjective endomorphism, and $H$ is an integral ample divisor such that $f^*
\mid_{V_H}$ has a good eigenspace. Then the Kawaguchi-Silverman conjecture holds for $f$.  
\end{proposition}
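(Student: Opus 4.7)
The plan is to extract from the good-eigenspace data a $\lambda_1(f)$-eigendivisor of positive Iitaka dimension, and then invoke Proposition~\ref{prop:LargeJordanBlock}.

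Set $\lambda = \lambda_1(f)$. Conditions (1) and (2) of the good-eigenspace definition tell us that $\lambda$ is the unique eigenvalue of $f^*|_{V_H}$ of maximal modulus and that the $\lambda$-eigenspace is an ordinary (not generalized) eigenspace. Condition (3) provides a basis $D_1,\ldots,D_l$ of integral nef divisor classes for this eigenspace, each satisfying $f^*D_i\sim_{\QQ} \lambda D_i$ in $\Pic(X)_{\QQ}$. By condition (4), there exists an index $i_0$ with $\kappa(D_{i_0})\neq 0$.

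The key step is to promote $\kappa(D_{i_0})\neq 0$ to $\kappa(D_{i_0})>0$. Since $D_{i_0}$ is nef, the standing hypothesis that every nef $\QQ$-divisor class on $X$ has an integer multiple that is effective produces some $m\geq 1$ with $mD_{i_0}$ effective. Hence $H^0(X,mD_{i_0})\neq 0$, so $\kappa(D_{i_0})\geq 0$, and combined with condition (4) this forces $\kappa(D_{i_0})>0$.

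Now extend $D_1,\ldots,D_l$ to a full Jordan-form basis of $f^*|_{V_H}$. By condition (1), $D_{i_0}$ sits among the first $l$ basis vectors in the ordering of Notation~\ref{not:VH}, so in the notation of Proposition~\ref{prop:LargeJordanBlock} we may take $E_{i_0} = D_{i_0}$ with $1\leq i_0 \leq l$ and $\kappa(E_{i_0})>0$. That proposition then immediately delivers both the Kawaguchi--Silverman and the sAND conjectures for $f$. The main conceptual content is simply the observation that condition (4), together with the nef-effective hypothesis, rules out $\kappa(D_{i_0})=-\infty$; the finitely generated rational nef cone hypothesis is not used directly in this deduction but is the natural geometric context in which conditions (3)--(4) of good eigenspace are expected to be verifiable.
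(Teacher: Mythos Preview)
Your argument is correct and follows essentially the same route as the paper: use nefness of the $D_i$ together with the hypothesis that nef classes have effective multiples to get $\kappa(D_{i_0})\geq 0$, combine with condition (4) to obtain $\kappa(D_{i_0})>0$, and conclude. The only cosmetic difference is that the paper finishes by citing Proposition~\ref{prop:yohsuke} rather than Proposition~\ref{prop:LargeJordanBlock}; your choice additionally yields the sAND conjecture. Your closing remark that the finitely generated rational nef cone hypothesis is not actually used in the deduction is accurate---the paper's preliminary step of iterating $f$ to fix the rays and observing the eigenvalues are integers is redundant once condition (3) of the good-eigenspace definition is assumed.
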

\begin{proof}
If $\lambda_1(f)=1$ the result follows, so assume $\lambda>1$. After iterating $f^*$ we may assume that $f^*$ fixes the rays of the nef cone. Furthermore we have that the eigenvalues of $f^*$ must be rational and thus integers since they are also algebraic integers. Let $D_1,...,D_l$ be a basis of eigendivisors associated to $\lambda_1(f)$ for the action of $f^*\mid_{V_H}$. As $f^*\mid_{V_H}$ has a good eigenspace we may take the $D_i$ nef and $\kappa(D_i)\geq 0$ by our assumption on $X$. Since $f^*\mid_{V_H}$ has a good eigenspace for some $j$ we have $\kappa(D_j)\neq 0$. As $\kappa(D_j)\geq 0$ we have $\kappa(D_j)>0$ and the result follows from (\ref{prop:yohsuke}). 
\end{proof}

The proof of (\ref{thm:effnefcone}) illustrates the basic idea behind the assumption of a good eigenspace. We attempt to put ourselves in a situation where we know $\kappa(D_i)\geq 0$ and then use the good eigenspace assumption to conclude that $\kappa(D)>0$. 

\begin{question*}
Let $X$ be a normal projective variety defined over a number field and $f\colon X\ra X$ a surjective endomorphism. 

\begin{itemize}
	\item What conditions on $X$ guarantee that there is an integral ample divisor $H$ such that $f^*\mid_{V_H}$ has a good eigenspace.
	\item Can one find interesting examples where $f$ has no good eigenspace for any ample integral $H$.
	\item If $X$ has a finitely generated rational nef cone, is it the case that $f$ has a good eigenspace for some ample integral $H$.
\end{itemize}
\end{question*}
We now show that one can get information about $\textnormal{TIR}_n$ from the assumption of having a good eigenspace.
This material may be well known and follows from (\cite[Remark 5.9 and 5.10]{1712.07533}) in the smooth case, but we provide arguments for completeness as we need to move beyond the smooth setting. 

\begin{proposition}\label{prop:TIRSize}
	Let $f\colon X\ra X$ be a surjective endomorphism with $X$ a normal projective variety defined over $\bar{\QQ}$ with surjective Albanese map. Let $f\colon X\ra X$ be a surjective endomorphism. Suppose that $f^*A\sim_\RR\lambda A$ for some non-zero $A\in \Pic^0(X)_\RR$. Then \[\lambda\leq \sqrt{\lambda_1(f)}\]
\end{proposition}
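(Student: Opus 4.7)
The plan is to push the problem onto the Albanese variety of $X$, reducing it to a statement about an abelian variety endomorphism, and then exploit the linearity of Néron--Tate heights attached to $\Pic^0$-classes versus the quadratic growth of ample heights. Because the Albanese map $a\colon X \to \Alb(X)$ is surjective by hypothesis, the universal property produces a surjective endomorphism $g\colon \Alb(X) \to \Alb(X)$ with $g \circ a = a \circ f$. Moreover, Picard--Albanese duality gives that $a^*\colon \Pic^0(\Alb(X))_\RR \to \Pic^0(X)_\RR$ is an isomorphism, so the hypothesis $f^*A \sim_\RR \lambda A$ transfers to $g^* \tilde A \sim_\RR \lambda \tilde A$ for some non-zero $\tilde A$ on $B := \Alb(X)$. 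The product formula for relative dynamical degrees (\cite[Theorem 2.2]{1802.07388}) applied to the surjective morphism $a$ yields $\lambda_1(g) \leq \lambda_1(f)$, so it suffices to prove $\lambda^2 \leq \lambda_1(g)$.

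Next I would set up heights on $B$. Pick any ample symmetric $H$ on $B$; then the Néron--Tate height $\hat h_H\colon B(\bar\QQ) \to \RR$ is a positive semidefinite quadratic form (positive definite modulo torsion), whereas $\hat h_{\tilde A}$ is an $\RR$-linear homomorphism $B(\bar\QQ) \to \RR$ that is non-zero because $\tilde A \neq 0$. The classical Cauchy--Schwarz estimate relating $\Pic^0$-heights to ample heights (arising from the Poincaré bundle and the polarization $\phi_H$) produces a constant $c > 0$ with
\begin{equation*}
\hat h_{\tilde A}(P)^2 \leq c \cdot \hat h_H(P) \quad \text{for all } P \in B(\bar\QQ).
\end{equation*}
Pick $Q \in B(\bar\QQ)$ with $\hat h_{\tilde A}(Q) \neq 0$ and iterate using the functional equation $\hat h_{\tilde A}(g^n Q) = \lambda^n \hat h_{\tilde A}(Q)$; applying the previous inequality at $g^n Q$ then gives
\begin{equation*}
\lambda^{2n} \hat h_{\tilde A}(Q)^2 \leq c \cdot \hat h_H(g^n Q) = c \cdot \hat h_{(g^n)^*H}(Q).
\end{equation*}

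To finish, I would control the right-hand side. Working inside the finite-dimensional $g^*$-stable subspace $V_H$ of (\ref{not:VH}), now applied to $g$ on $B$, and decomposing $(g^n)^*H$ in a Jordan basis for $g^*\mid_{V_H}$, one bounds the right side by $C n^d \lambda_1(g)^n$ for constants $C, d$ depending on $Q$. Taking $n$-th roots and letting $n \to \infty$ forces $\lambda^2 \leq \lambda_1(g)$, and combining with the first paragraph yields $\lambda \leq \sqrt{\lambda_1(f)}$. The main obstacle is the Cauchy--Schwarz step: making the inequality $\hat h_{\tilde A}(P)^2 \leq c \cdot \hat h_H(P)$ precise requires identifying $\Pic^0(B)$ with $\hat B(\bar\QQ)$, realizing $\hat h_{\tilde A}$ as a Néron--Tate pairing against a fixed element of $\hat B(\bar\QQ)$, and extending by continuity from the rational case $\tilde A \in \Pic^0(B)_\QQ$ to $\tilde A \in \Pic^0(B)_\RR$. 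Everything else is either a formal consequence of the Albanese reduction or of standard spectral analysis of $g^*$ on $V_H$.
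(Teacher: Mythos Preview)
Your reduction to the Albanese variety is exactly what the paper does: use $a^*\colon \Pic^0(\Alb(X))_\RR \to \Pic^0(X)_\RR$ being an isomorphism to transfer the eigen-equation to $g^*\tilde A \sim_\RR \lambda \tilde A$ on $B=\Alb(X)$, and then bound $\lambda_1(g)\le \lambda_1(f)$. At that point the paper simply invokes \cite[Remarks 5.8 and 5.9]{1712.07533} for the inequality $\lambda \le \sqrt{\lambda_1(g)}$ on an abelian variety, whereas you supply an explicit height-theoretic argument. Your argument is essentially the standard one underlying that citation: linearity of $\hat h_{\tilde A}$ for $\tilde A\in\Pic^0$, the Cauchy--Schwarz comparison $\hat h_{\tilde A}(P)^2\le c\,\hat h_H(P)$ against a symmetric ample $H$, and then asymptotic growth of $\hat h_H(g^nQ)$ controlled by the spectral radius of $g^*$. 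So the approaches coincide; yours is just more self-contained.

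Two small points of care in your write-up. First, the induced $g$ on $\Alb(X)$ may be an isogeny composed with a translation, so the functional equation reads $\hat h_{\tilde A}(gQ)=\lambda\,\hat h_{\tilde A}(Q)+\textnormal{const}$ rather than exactly $\lambda\,\hat h_{\tilde A}(Q)$; this does not affect the $n$-th root limit but should be acknowledged. Second, for the growth bound $\hat h_H(g^nQ)\le C n^d \lambda_1(g)^n$ you can bypass the $V_H$ machinery entirely on an abelian variety: since $\hat h_H$ is a quadratic form and $\lambda_1(g)$ equals the square of the spectral radius of the analytic representation of (the isogeny part of) $g$, the estimate is immediate from linear algebra on the tangent space. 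Either route works.
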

\begin{proof}
	Let $\pi\colon X\ra \Alb(X)$ be the projection. Recall that $\pi^*$ induces an isomorphism \[\pi^*\colon \Pic^0(\Alb(X))\ra \Pic^0(X)\] on the level of $\ZZ$-modules and so also as $\RR$-vector spaces. Thus we have that $A=\pi^*B$ for some $B\in \Pic^0(\Alb(X))$. We also have a commuting diagram
	
	\[\xymatrix{X\ar[r]^f\ar[d]_\pi & X\ar[d]^\pi \\ \Alb(X)\ar[r]_g & \Alb(X)}\]
	
	by the universal property of the Albanese variety. Note that the Albanese morphism is surjective. Thus we have that $f^*\pi^*B=\lambda \pi^* B=\pi^*g^* B$. Since $\pi^*$ is an isomorphism on $\Pic^0$ we have that $g^*B=\lambda B$. We have thus reduced to the case that $X$ is smooth (and in fact an abelian variety). By (\cite[Remarks 5.8 and 5.9]{1712.07533}) we have that \[\sqrt{\lambda_1(f)}\geq \sqrt{\lambda_1(g)}\geq \lambda\]
	
	as needed. 
	
\end{proof}

\begin{corollary}\label{cor:uniquelift}
	Let $f\colon X\ra X$ be a surjective endomorphism with $X$ a normal projective variety defined over $\bar{\QQ}$ with surjective Albanese map.  Suppose that $\mu$ is an eigenvalue of $f^*$ acting on $\Pic(X)_\RR$ with $\mid \mu\mid>\sqrt{\lambda_1(f)}$. Suppose that $D_1,D_2\in \Pic(X)_\RR$ are non-zero with $f^*D_i\sim_\RR\mu D_i$. If $D_1\equiv_{\textnormal{Num}}D_2$ then we have that $D_1\sim_\RR D_2$
\end{corollary}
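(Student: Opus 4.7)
The plan is to reduce to Proposition~\ref{prop:TIRSize} by passing to the difference of the two eigendivisors. First I set $A:=D_1-D_2\in\Pic(X)_\RR$. By $\RR$-linearity of pullback,
\[
f^*A \;=\; f^*D_1-f^*D_2 \;\sim_\RR\; \mu D_1-\mu D_2\;=\;\mu A,
\]
so $A$ is itself an $\RR$-eigenvector of $f^*$ with eigenvalue $\mu$. Meanwhile, the hypothesis $D_1\equiv_{\textnormal{Num}}D_2$ says that $A$ lies in the kernel of $\Pic(X)_\RR\to N^1(X)_\RR$.

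Next I would identify this kernel with $\Pic^0(X)_\RR$ so as to be able to apply Proposition~\ref{prop:TIRSize}. Over $\bar\QQ$ we have the inclusions $\Pic^0(X)\subseteq \Pic^\tau(X)\subseteq\Pic(X)$ where $\Pic^\tau(X)$ is the numerically trivial subgroup and $\Pic^\tau(X)/\Pic^0(X)$ is finite; tensoring with $\RR$ kills this torsion and yields $\ker\bigl(\Pic(X)_\RR\to N^1(X)_\RR\bigr)=\Pic^0(X)_\RR$. Thus $A\in \Pic^0(X)_\RR$.

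Finally I would conclude by contradiction. Suppose $A\neq 0$ in $\Pic(X)_\RR$. Since $X$ has surjective Albanese and $A$ is a nonzero element of $\Pic^0(X)_\RR$ with $f^*A\sim_\RR\mu A$, Proposition~\ref{prop:TIRSize} forces $|\mu|\leq\sqrt{\lambda_1(f)}$, contradicting the standing assumption $|\mu|>\sqrt{\lambda_1(f)}$. Hence $A=0$ in $\Pic(X)_\RR$, which is precisely the statement $D_1\sim_\RR D_2$.

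The only substantive step is the identification of numerically trivial $\RR$-classes with $\Pic^0(X)_\RR$; everything else is formal linearity. On a normal projective variety over $\bar\QQ$ this identification is standard (via the finiteness of $\Pic^\tau/\Pic^0$), so I expect no real obstacle — the proof is essentially a one-line application of Proposition~\ref{prop:TIRSize} to the difference divisor.
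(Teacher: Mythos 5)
Your proof is correct and follows essentially the same approach as the paper: pass to the difference $A=D_1-D_2$, observe it is a $\mu$-eigenvector in $\Pic^0(X)_\RR$, and apply Proposition~\ref{prop:TIRSize} to force $A=0$. You add a useful clarification the paper glosses over, namely the identification of $\ker(\Pic(X)_\RR\to N^1(X)_\RR)$ with $\Pic^0(X)_\RR$ via finiteness of $\Pic^\tau/\Pic^0$.
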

\begin{proof}
	By assumption we have that $D_1=D_2+A_0$ for some $A_0\in \Pic^0(X)_\RR$. Then $A_0=D_1-D_2$. So $f^*A_0=\mu A_0$. Since $\mid \mu\mid>\sqrt{\lambda_1(f)}$ we have that $A_0=0$ by (\ref{prop:TIRSize}).	
\end{proof}	

We now obtain the following technical result that will be needed later.

\begin{lemma}\label{lem:DinVH}
	Let $f\colon X\ra X$ be a surjective endomorphism with $X$ a normal projective variety defined over $\bar{\QQ}$ with surjective Albanese map. Suppose that $\lambda_1(f)=\lambda>1$. Assume the $\lambda$-eigenspace of $f^*$ acting on $N^1(X)_\RR$ is 1 dimensional and that $\lambda$ appears with multiplicity 1 for the action of $f^*$ on $N^1(X)_\RR$. Suppose further that there is a unique eigenvalue of largest possible magnitude for the action on $N^1(X)_\RR$. Suppose that $D$ is a non-zero element of $\Pic(X)_\RR$ with $f^*D\sim_\RR\lambda D$. Let $H$ be any integral ample divisor on $X$ and let $V_H$ be as in (\ref{not:VH}). Then $D\in V_H$. 
\end{lemma}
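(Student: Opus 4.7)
The plan is to first exhibit a $\lambda$-eigenvector $D'$ inside $V_H$ itself, then observe that the given $D$ differs from a scalar multiple of $D'$ only by a class in $\Pic^0(X)_\RR$, and finally eliminate that $\Pic^0$-class using \ref{cor:uniquelift}.

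For the first step I would rerun the argument behind \ref{prop:eigenvaluesize} for the chosen ample $H$. Combining \ref{theorem:MSS} with \ref{theorem:KS2} shows that some eigenvalue $\mu$ of $f^*|_{V_H}$ satisfies $|\mu|=\lambda$. Consider the $f^*$-equivariant reduction map $V_H\to N^1(X)_\RR$, with kernel $K:=V_H\cap\Pic^0(X)_\RR$ and image $\overline V_H$. By \ref{prop:TIRSize}, every eigenvalue of $f^*|_K$ has absolute value at most $\sqrt{\lambda_1(f)}<\lambda$, so $\mu$ must descend to an eigenvalue of $f^*|_{\overline V_H}\subseteq N^1(X)_\RR$ of absolute value $\lambda$. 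By the hypothesis that $\lambda$ is the unique eigenvalue of maximal magnitude on $N^1(X)_\RR$, necessarily $\mu=\lambda$. Hence we can choose $D'\in V_H\setminus\{0\}$ with $f^*D'\sim_\RR \lambda D'$.

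For the second step I would compare images in $N^1(X)_\RR$. The image $\overline{D'}$ is nonzero, since otherwise $D'\in K$ would be a nonzero $\lambda$-eigenvector inside $\Pic^0(X)_\RR$, contradicting \ref{prop:TIRSize}; the same reasoning forces $\overline D\neq 0$. Both $\overline D$ and $\overline{D'}$ are $\lambda$-eigenvectors of $f^*$ on $N^1(X)_\RR$, and since that eigenspace is one-dimensional by hypothesis, there is a scalar $c\in\RR^{\times}$ with $\overline D=c\overline{D'}$, i.e.\ $D\equiv_{\textnormal{Num}} cD'$.

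For the final step, $D$ and $cD'$ are both nonzero $\lambda$-eigenvectors of $f^*$ in $\Pic(X)_\RR$, they are numerically equivalent, and the hypothesis $\lambda>1$ forces $\lambda>\sqrt{\lambda_1(f)}$. Applying \ref{cor:uniquelift} yields $D\sim_\RR cD'$, and since $D'\in V_H$ and $V_H$ is a vector subspace of $\Pic(X)_\RR$, we conclude $D\in V_H$. The main obstacle is precisely the $\Pic^0$-ambiguity between $\Pic(X)_\RR$ and $N^1(X)_\RR$: a priori the natural $V_H$-eigenvector could differ from $D$ by a $\lambda$-eigenvector living in $\Pic^0$. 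The surjective Albanese hypothesis, which underlies both \ref{prop:TIRSize} and \ref{cor:uniquelift}, is exactly what rules this out; once it is invoked the rest is routine linear algebra.
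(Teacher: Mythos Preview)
Your proof is correct and follows essentially the same route as the paper: exhibit a $\lambda$-eigenvector $D'\in V_H$, compare $D$ and $D'$ in $N^1(X)_\RR$ using the one-dimensionality of the $\lambda$-eigenspace, and then invoke \ref{cor:uniquelift} to upgrade numerical equivalence to $\RR$-linear equivalence. If anything, your first step is slightly more careful than the paper's, since you explicitly use the short exact sequence with kernel $K=V_H\cap\Pic^0(X)_\RR$ and \ref{prop:TIRSize} to justify that the large eigenvalue $\mu$ descends to $N^1(X)_\RR$ (and that $\overline{D'}\neq 0$), whereas the paper asserts this passage somewhat implicitly via the reference to \ref{cor:findingeigenvalue1}.
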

\begin{proof}
	Let $D_1,..,D_p$ be a Jordan form for $f^*$ acting on $V_H$ after extending scalars. Then arguing as in $(\ref{cor:findingeigenvalue1})$ there is some eigenvalue $\lambda^\prime$ of $f^*$ on $V_H$ of magnitude $\lambda$. Since $\lambda^\prime$ is also an eigenvalue of $f^*$ acting on $N^1(X)_\RR$ we have that $\lambda^\prime=\lambda$ by assumption. Let $D^\prime$ be a $\lambda$-eigenvalue in $V_H$ which exists by the above argument. Since $\lambda$ is real we may take $D^\prime$ real and thus have that the numerical class of $D^\prime$ is a $\lambda$-eigenvector for $f^*$  on $N^1(X)_\RR$. Thus $aD^\prime\equiv_{\textnormal{Num}} D$ since both are eigenvectors for the action of $f^*$ on $N^1(X)_\RR$ and the eigenspace is 1 dimensional. Now (\ref{cor:uniquelift}) says that $aD^\prime=D$ and so $D\in V_H$ as needed. 
\end{proof}

The above result tells us that all the vector spaces in certain situations have "base points", which tells us that there are strong restrictions on the subspaces $V_H$.

\begin{proposition}\label{prop:TIRBlock}
	Let $f\colon X\ra X$ be a surjective endomorphism with $X$ a normal projective variety defined over $\bar{\QQ}$ with surjective Albanese map. Let $H$ be an integral ample divisor on $X$ and assume the notation of (\ref{not:VH}). Let $\lambda=\lambda_1(f)>1$.  Suppose that  $f^*$ acting on $N^1(X)_\RR$ has a 1 dimensional eigenspace and a size 1 Jordan block with a unique eigenvalue of largest size. Then $f^*$ acting on $V_H$ has a size 1 Jordan block and a 1 dimensional eigenspace for the unique largest eigenvalue. 
\end{proposition}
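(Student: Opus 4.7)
The plan is to push the Jordan structure of $f^*|_{V_H}$ at the top eigenvalue down to $N^1(X)_\RR$ through the quotient $\Pic(X)_\RR \twoheadrightarrow N^1(X)_\RR$, and use Proposition~\ref{prop:TIRSize} to prevent any of that structure from being absorbed by the kernel $\Pic^0(X)_\RR$. The key inequality is that, since $\lambda = \lambda_1(f) > 1$, we have $\lambda > \sqrt{\lambda_1(f)}$, so Proposition~\ref{prop:TIRSize} immediately rules out any non-zero $\lambda$-eigenvector of $f^*$ lying in $\Pic^0(X)_\RR$.

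The first step is to confirm that $\lambda$ remains the unique eigenvalue of $f^*|_{V_H}$ of maximal modulus. Any eigenvector $D \in V_H$ for an eigenvalue $\mu$ of $f^*|_{V_H}$ (working over $\RR$ or its complexification as needed) satisfies one of two things: either $[D] \neq 0$ in $N^1(X)_\RR$, in which case $\mu$ is an eigenvalue of $f^*$ on $N^1(X)_\RR$ and $|\mu| \leq \lambda$ with equality forcing $\mu = \lambda$ by hypothesis; or $D \in \Pic^0(X)_\RR$, whence $|\mu| \leq \sqrt{\lambda_1(f)} < \lambda$ by Proposition~\ref{prop:TIRSize}. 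Existence of $\lambda$ as an eigenvalue on $V_H$ comes from Lemma~\ref{lem:DinVH}.

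For the $\lambda$-eigenspace on $V_H$, I would suppose it contains two linearly independent real eigenvectors $D, D'$. By the one-dimensional eigenspace hypothesis on $N^1(X)_\RR$, there is a scalar $c$ with $[D'] = c[D]$, so $D' - cD \in \Pic^0(X)_\RR$ is a $\lambda$-eigenvector and must vanish by Proposition~\ref{prop:TIRSize}, contradicting independence. For the size-one Jordan block claim, I would suppose a block of size $\geq 2$ exists, with chain $D_0, D_1 \in V_H$ satisfying $f^*D_0 = \lambda D_0$ and $f^*D_1 = \lambda D_1 + D_0$. Then $[D_0] \neq 0$ in $N^1(X)_\RR$ by the same appeal to Proposition~\ref{prop:TIRSize}, and $[D_1]$ cannot be a scalar multiple of $[D_0]$: if $[D_1] = c[D_0]$, applying $f^*$ yields $\lambda c[D_0] = \lambda c[D_0] + [D_0]$, forcing $[D_0] = 0$. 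Hence $[D_0]$ and $[D_1]$ span a genuine two-dimensional subspace of $N^1(X)_\RR$, and the relation $f^*[D_1] = \lambda[D_1] + [D_0]$ exhibits a Jordan block of size $\geq 2$ for $\lambda$ on $N^1(X)_\RR$, contradicting the hypothesis.

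The main obstacle is purely conceptual: one must ensure that the kernel $\Pic^0(X)_\RR$ of the projection $V_H \to N^1(X)_\RR$ does not swallow the extra structure we are trying to descend. Proposition~\ref{prop:TIRSize} is precisely what enables this control; without the surjectivity of the Albanese map and the resulting bound $|\mu| \leq \sqrt{\lambda_1(f)}$ on $\Pic^0$-eigenvalues, the lift from $N^1(X)_\RR$ back to $V_H$ would not be unique enough to force the needed contradictions.
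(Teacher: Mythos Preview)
Your proposal is correct and follows essentially the same route as the paper: both arguments project $V_H$ to $N^1(X)_\RR$ and invoke Proposition~\ref{prop:TIRSize} to ensure that no non-trivial $\lambda$-structure can hide in $\Pic^0(X)_\RR$. The only cosmetic difference is in the Jordan-block step: the paper assumes $[D_2]=a[D_1]$ (forced by the size-1 block on $N^1$) and computes that $D_1\in\Pic^0(X)_\RR$, while you run the contrapositive by showing directly that $[D_0],[D_1]$ are independent in $N^1(X)_\RR$; the content is the same.
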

\begin{proof}
	After extending scalars to say $D_1,...,D_p$ assume that the action of $f^*$ on $V_H$ is in Jordan form. As in (\ref{lem:DinVH}) $\lambda$ appears as an eigenvalue, and no other eigenvalues have magnitude $\lambda$. Since we are dealing with a real eigenvalue, the Jordan blocks associated to $\lambda$ are real. Suppose that $D_1,...,D_l$ is such a Jordan block. I claim that $l=1$ and that all Jordan blocks have multiplicity 1. If not we have $f^*D_2=\lambda D_2+D_1$ in $\Pic(X)_\RR$ and that $D_2=aD_1+A_0$ for some $A_0\in \Pic(X)_
	\RR$ because there is a unique Jordan block of size 1 for the action of $f^*$ on $N^1(X)_\RR$. Thus $D_i\equiv_{\textnormal{Num}} aD_1$ for some non-zero scalar $a$. It follows that $f^*A_0=\lambda D_2+D_1-a\lambda D_1=\lambda A_0+D_1$. So $f^*A_0-\lambda A_0=D_1$ which means $D_1\in \Pic^0(X)_\RR$ and this is a contradiction. So all Jordan blocks have size 1. Now let $D_1,...,D_s$ be an eigen basis for $f^*$ acting on $V_H$. The same argument given earlier implies that if $D_i,D_j$ are $\lambda$-eigenvalues then they are numerically scalar multiples of one another, and so must be scalar multiples in $V_H$, a contradiction. It follows that $f^*$ acting on $V_H$ has 1-dimensional $\lambda$ eigenspace with Jordan blocks of size 1. 	
\end{proof}
\begin{lemma}\label{lem:TIRLem}
	Let $X,Y$ be  normal projective varieties defined over $\bar{\QQ}$. Let $f\colon X\ra X$ a surjective endomorphism and suppose that the Albanese morphism is surjective. Suppose that $\pi\colon X\ra Y$ is a surjective endomorphism, and that we have a commuting diagram 
	\[\xymatrix{X\ar[r]^f\ar[d]_\pi & X\ar[d]^\pi\\ Y\ar[r]_g & Y}\]
	We assume that $\rho(X)=\rho(Y)+1$ and that $\lambda_1(f)>\lambda_1(g)$. Suppose that there is an ample integral divisor $H$ with $f^*\mid_{V_H}$ having a good eigenspace. Then if $f^*D=\lambda_1(f) D$ in $\Pic(X)_\RR$ for some nef $\QQ$-Cartier divisor. Then $\lambda_1(f)$ is an integer and $\kappa(D)\neq 0$.
\end{lemma}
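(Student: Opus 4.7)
The plan is to first use the commuting diagram and the hypothesis $\rho(X)=\rho(Y)+1$ to constrain the spectral structure of $f^*$ acting on $N^1(X)_\RR$, then transfer this information into $V_H$ via (\ref{prop:TIRBlock}) and into $\Pic(X)_\RR$ via (\ref{lem:DinVH}), and finally read off both conclusions from the good-eigenspace hypothesis.

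Set $\lambda=\lambda_1(f)$. Pullback along $\pi$ gives an injection $\pi^*\colon N^1(Y)_\RR\hookrightarrow N^1(X)_\RR$ whose image is $f^*$-stable and of codimension one. On this image the eigenvalues of $f^*$ coincide with those of $g^*$ and hence are bounded in modulus by $\lambda_1(g)<\lambda$, so the remaining eigenvalue on the one-dimensional quotient must equal $\lambda$ since $\lambda$ is the spectral radius of $f^*$ on $N^1(X)_\RR$. It follows that $\lambda$ occurs on $N^1(X)_\RR$ with algebraic multiplicity exactly one, is the unique eigenvalue of its modulus, and has a one-dimensional eigenspace consisting of a single Jordan block of size one.

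Applying (\ref{prop:TIRBlock}), whose hypotheses are now verified, to $V_H$ yields that $f^*\mid_{V_H}$ also has a one-dimensional $\lambda$-eigenspace. Combined with the good-eigenspace assumption this forces $l=1$ in (\ref{def:Jordan Block}), so there is a single integral nef divisor $D_1$ with $f^*D_1=\lambda D_1$ in $\Pic(X)_\RR$ and $\kappa(D_1)\neq 0$. For the given $D$, which we may assume is non-zero (otherwise the statement is vacuous), (\ref{lem:DinVH}) applies and gives $D\in V_H$; since the $\lambda$-eigenspace of $f^*\mid_{V_H}$ is one-dimensional, we conclude $D=cD_1$ in $\Pic(X)_\RR$ for some scalar $c$.

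The nefness of both $D$ and $D_1$, together with $[D_1]\neq 0$ in $N^1(X)_\RR$ (which holds since otherwise $D_1\in\Pic^0(X)_\RR$ would violate (\ref{prop:TIRSize}) because $\lambda>1$ implies $\lambda>\sqrt{\lambda_1(f)}$), forces $c\geq 0$; with $D\neq 0$ we obtain $c>0$ and hence $\kappa(D)=\kappa(D_1)\neq 0$. For the integrality of $\lambda$, the identity $f^*[D_1]=\lambda[D_1]$ holds in the torsion-free part of $N^1(X)$, where $[D_1]$ is a non-zero integral class and $f^*$ is represented by an integer matrix, which forces $\lambda\in\QQ$; being an algebraic integer, $\lambda$ must then lie in $\ZZ$. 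The subtlest step is the transfer from $N^1(X)_\RR$ back to $\Pic(X)_\RR$, since a priori the latter may contain many more $\lambda$-eigendivisors differing by $\Pic^0$-classes; this is precisely where the surjective-Albanese hypothesis enters, through (\ref{cor:uniquelift}), which in turn underlies both (\ref{prop:TIRBlock}) and (\ref{lem:DinVH}).
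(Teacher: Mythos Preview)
Your proof is correct and follows essentially the same route as the paper: establish that $\lambda$ is a simple eigenvalue on $N^1(X)_\RR$ via the codimension-one inclusion $\pi^*N^1(Y)_\RR\hookrightarrow N^1(X)_\RR$, then invoke (\ref{prop:TIRBlock}) and (\ref{lem:DinVH}) to pin down the $\lambda$-eigenspace in $V_H$ and place $D$ there, and finally read off $\kappa(D)\neq 0$ from the good-eigenspace hypothesis. The one genuine deviation is the integrality argument: the paper computes $\det f^*=\lambda\cdot\det g^*$ (using $\rho(X)=\rho(Y)+1$) and notes both determinants are integers, whereas you extract rationality from the integral eigenvector $D_1$ supplied by the good-eigenspace assumption; both work, though the paper's version is independent of that assumption. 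Your write-up is also more explicit than the paper's in justifying why $D$ is a positive multiple of $D_1$ (the paper simply asserts $\kappa(D)\neq 0$ at the end).
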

\begin{proof}
	Since we have assumed that $\lambda_1(g)<\lambda_1(f)=\lambda$ we have that $\lambda$ is not a eigenvalue of $g^*$ acting on $N^1(Y)_\RR$. Furthermore, there is no eigenvalue of magnitude $\lambda$ for $g^*$.  Then by the assumption on the Picard number we have that $\det f^*=\lambda\det g^*$. Since $\det f^*,\det g^*$ are integers we have that $\lambda$ is a rational number and thus an integer.Then we have that $D\in V_H$ by (\ref{lem:DinVH}). Since $\lambda$ has multiplicity 1 for the action of $f^*$ on $N^1(X)_\RR$ (\ref{prop:TIRBlock}) tells us the same holds for $f^*$ acting on $V_H$ and that this eigenspace is one dimensional. The assumption that $f$ has a good eigenspace now tells us that $\kappa(D)\neq 0$.
\end{proof}

Ideally one would then apply (\ref{lem:TIRLem}) to case $\textnormal{TIR}_n$, where in that case we have that $\kappa(D)\geq 0$ and the conclusion is then that $\kappa(D)>0$.

\subsection{The case of Picard number 2}\label{sec:Pic2}

We restrict to the case of a variety with Picard number $2$ and prove some basic results. In addition we will see how the requirement of a good eigenspace arises naturally. For completeness we prove some easy results in this case which may be well known.

\begin{proposition}
Let $X$ be a normal projective variety of Picard number $2$. Let $f\colon X\ra X$ be a surjective endomorphism. Suppose that the eigenspace of $\lambda_1(f)$ is 2-dimensional. Then the Kawaguchi-Silverman Conjecture holds for $f$.
\end{proposition}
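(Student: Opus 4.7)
If $\lambda_1(f)=1$, the conjecture is automatic, because $1\leq \alpha_f(P)\leq \lambda_1(f)=1$ for every $P\in X(\bar\QQ)$ by (\ref{thm:fundineq}) and the fact that $h_H^+\geq 1$. So assume $\lambda_1(f)>1$. The plan is to use the hypothesis to reduce everything to the numerical version of the Call--Silverman canonical height for an arbitrary ample divisor.

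First, I would observe that because $\rho(X)=2$, the vector space $N^1(X)_\RR$ is two-dimensional, and the assumption that the $\lambda_1(f)$-eigenspace of $f^*$ has dimension two forces $f^*$ to act on $N^1(X)_\RR$ as the scalar $\lambda_1(f)\cdot \iden$. Consequently, for any integral ample divisor $H$ on $X$ one has
\[f^*H\equiv_{\textnormal{Num}}\lambda_1(f)\,H.\]
This is the key structural consequence of the hypothesis that makes the rest of the argument essentially automatic.

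Next, since $\lambda_1(f)>1$ we have $\lambda_1(f)>\sqrt{\lambda_1(f)}$, so Theorem~\ref{thm:canheight1} applies with $D=H$ and $\lambda=\lambda_1(f)$. This produces a canonical height function $\hat h_H$ with $\hat h_H\circ f=\lambda_1(f)\,\hat h_H$, and, by part (5) of that theorem (ample $D$, number field setting), $\hat h_H(P)=0$ if and only if $P$ is $f$-preperiodic.

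Finally, let $P\in X(\bar\QQ)$ be a point with Zariski-dense $f$-orbit. Such a point is certainly not preperiodic, so $\hat h_H(P)\neq 0$. Part (4) of Theorem~\ref{thm:canheight1} then yields $\alpha_f(P)=\lambda_1(f)$, proving the Kawaguchi--Silverman conjecture for $f$. The one potentially delicate point (and the only step requiring care) is the passage from numerical equivalence to a genuine canonical-height statement; but this is exactly what the numerical version of Theorem~\ref{thm:canheight1} delivers, so no linear-equivalence lift of $H$ to a $\Pic(X)_\QQ$-eigendivisor is needed, and in particular no spectral bound on $f^*|_{\Pic^0(X)_\RR}$ or hypothesis on the Albanese enters the argument.
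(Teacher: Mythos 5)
Your proof is correct and matches the paper's approach: both observe that a $2$-dimensional $\lambda_1(f)$-eigenspace on a $2$-dimensional $N^1(X)_\RR$ forces $f^*$ to act as the scalar $\lambda_1(f)$, so any ample $H$ satisfies $f^*H\equiv_{\textnormal{Num}}\lambda_1(f)H$, and one concludes via parts (4) and (5) of Theorem~\ref{thm:canheight1}. Your write-up is in fact a bit cleaner than the paper's, which passes through an unnecessary remark about $f^*$ permuting the boundary rays of the nef cone before invoking the eigenspace hypothesis.
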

\begin{proof}
Note that $f^*$ permutes the boundary of the nef cone of $X$. So in particular, the boundary must be eigendivisors. It follows that $f^*$ acts diagonally on $N^1(X)_\RR$. In particular $f^*$ has an ample eigendivisor and the result follows.
\end{proof}

In fact in the setting of Picard number 2 we may assume that the nef cone and Pseudo-effective cone coincide.

\begin{proposition}\label{prop:nef=peff}
Let $X$ be a normal projective variety of Picard number 2. Suppose that $\Nef(X)\neq\PEff(X)$. Let $f\colon X\ra X$ be a surjective endomorphism. Then $f^2$ acts by a dilation on $N^1(X)_\RR$. In particular Kawaguchi-Silverman and the sAND conjectures hold for $X$. 
\end{proposition}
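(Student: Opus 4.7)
The plan is to exploit the smallness of dimension. Since $\rho(X)=2$, the vector space $N^1(X)_\RR$ is two-dimensional, so both $\Nef(X)$ and $\PEff(X)$ are closed, pointed, full-dimensional convex cones bounded by exactly two extremal rays each. Because $f$ is surjective, the pullback $f^*$ preserves both cones: $f^*\Nef(X)\subseteq\Nef(X)$ and $f^*\PEff(X)\subseteq\PEff(X)$. As $f^*$ is a linear isomorphism, it permutes the (finite) set of extremal rays of each of these cones. Since there are only two rays in each cone, replacing $f$ by $f^2$ makes $(f^2)^*$ fix every such ray. This is the main normalization step.

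The second step is to count invariant lines. Each extremal ray fixed by $(f^2)^*$ spans a one-dimensional invariant subspace, hence an eigenline. Write the rays of $\Nef(X)$ as $n_1,n_2$ and the rays of $\PEff(X)$ as $p_1,p_2$. Since $\Nef(X)\subseteq\PEff(X)$, each $n_i$ lies either on $\{p_1,p_2\}$ or strictly inside $\PEff(X)$. If $\{n_1,n_2\}=\{p_1,p_2\}$ as sets of rays, the two cones coincide, contradicting $\Nef(X)\neq\PEff(X)$. Hence at least three of the four lines $\RR n_1,\RR n_2,\RR p_1,\RR p_2$ are distinct, giving $(f^2)^*$ at least three distinct eigenlines in $\RR^2$.

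The third step is purely linear-algebraic: a non-scalar linear operator on $\RR^2$ has at most two eigenlines (over $\RR$). Therefore $(f^2)^*$ must be a scalar, i.e.\ acts on $N^1(X)_\RR$ by a dilation. No subtle spectral theory is needed here, so I do not expect a real obstacle; the content is entirely the combinatorics of two rays in a plane.

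For the final assertion, if $\lambda=\lambda_1(f^2)=1$ then Kawaguchi--Silverman and sAND are trivial from $1\leq \alpha_f(P)\leq \lambda_1(f)$ and Theorem \ref{thm:fundineq}. Otherwise $\lambda>1$, and for any integral ample $H$ we have $(f^2)^*H\equiv_{\textnormal{Num}}\lambda H$ with $\lambda=\lambda_1(f^2)>\sqrt{\lambda_1(f^2)}$, so Theorem \ref{thm:canheight1} applies: the canonical height $\hat h_H$ exists, and part (5) tells us $\hat h_H(P)=0$ iff $P$ is $f^2$-preperiodic. For any $P\in X(\bar{\QQ})$ with Zariski-dense $f$-orbit (and hence dense $f^2$-orbit), $P$ is not preperiodic, so $\hat h_H(P)\neq 0$, and part (4) gives $\alpha_{f^2}(P)=\lambda_1(f^2)$, which is equivalent to $\alpha_f(P)=\lambda_1(f)$. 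The sAND statement follows similarly, since $\{P:\hat h_H(P)=0\}$ is the preperiodic locus, which is not Zariski dense by standard Northcott-type arguments for ample canonical heights.
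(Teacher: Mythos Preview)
Your argument is correct, and it takes a genuinely different route from the paper's. The paper, after squaring $f$ so that $f^*$ fixes the two nef rays (hence is diagonal with positive eigenvalues), observes that one of these nef boundary rays must lie in the \emph{interior} of $\PEff(X)$; it then invokes the Perron--Frobenius-type result recorded as Theorem~\ref{thm:conethm} (an eigenvector in the interior of an invariant proper cone forces all eigenvalues to have equal modulus), and since the eigenvalues are positive this gives a dilation. By contrast, you bypass the cone theorem entirely: after squaring, $(f^2)^*$ fixes the two rays of $\Nef(X)$ \emph{and} the two rays of $\PEff(X)$, and the hypothesis $\Nef(X)\neq\PEff(X)$ forces at least three of these four lines to be distinct; a non-scalar operator on a two-dimensional space has at most two invariant lines, so $(f^2)^*$ is scalar. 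Your approach is more elementary and self-contained in this Picard-number-two setting, while the paper's route via Theorem~\ref{thm:conethm} is the one that generalizes (and is reused) elsewhere in the paper, e.g.\ in Proposition~\ref{prop:dialation}. Your derivation of Kawaguchi--Silverman and sAND from the dilation property via Theorem~\ref{thm:canheight1}(4),(5) and Northcott is also fine.
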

\begin{proof}
Since $\rho(X)=2$ we may replace $f$ with $f^2$ and assume that $f^*$ fixes the rays of the nef cone and has positive eigenvalues. As we have assumed that the pseudo-effective cone $\PEff(X)$ is strictly larger then $X$, there is a boundary ray $D$ of $\Nef(X)$ that lies in the interior of $\PEff(X)$. Thus $f^*$ has an eigendivisor in the interior of $\PEff(X)$ a full-dimensional pointed cone. By (\ref{thm:conethm}) we see that $f^*$ acts by a dilation as needed.  \end{proof}

From the perspective of the minimal model program (\ref{prop:nef=peff}) suggests that it suffices to consider the varieties which arise at the final stage of the minimal model program. Let $X$ be a variety with terminal singularities defined over $\bar{\QQ}$ with $\rho(X)=2$. Suppose that $\Nef(X)=\PEff(X)$. 

\begin{enumerate}
	\item If $K_X$ is nef then there are no $K_X$ negative extremal contractions and the minimal model program tells us that $X$ is a minimal model.
	\item If $K_X$ is not nef then there is at least one $K_X$ negative extremal contraction. The assumption that $\Nef(X)=\PEff(X)$ tells us that this extremal contraction is of fibering type. Thus $X$ is a Mori-fiber space.  
\end{enumerate}

As we want to deal with integral eigendivisors we need to work with varieties whose nef cone contains a rational ray on the boundary. The following elementary computation shows that in the presence of a surjective endomorphism with distinct eigenvalues, the boundary is rational, or completely irrational. That is, we cannot have an irrational ray and a rational ray.

\begin{proposition}
	Let $X$ be a smooth projective variety of Picard number 2 defined over a number field $K$. Let the eigenvalues of $f^*$ acting on $N^1(X)_\QQ$ be $\lambda,\mu$ with $\lambda\neq \mu$. Suppose that $\mu$ is an integer and that $\mu$ has an integral eigendivisor in $N^1(X)_\QQ$. Then $\lambda$ is an integer and $\lambda$ has an integral divisor class in $N^1(X)_\QQ$.  
\end{proposition}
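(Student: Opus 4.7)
The plan is to exploit the fact that $f^*$ acts integrally on the lattice $N^1(X)_\ZZ$, which for a smooth projective variety is a free $\ZZ$-module of rank $\rho(X)=2$. The first step is purely formal: the characteristic polynomial of the induced $\ZZ$-linear endomorphism of $N^1(X)_\ZZ$ is
\[
p(t)=t^2-(\lambda+\mu)\,t+\lambda\mu \in \ZZ[t],
\]
so in particular $\lambda+\mu\in\ZZ$. Combined with the hypothesis $\mu\in\ZZ$, this immediately yields $\lambda=(\lambda+\mu)-\mu\in\ZZ$, settling the first assertion.

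For the second assertion, I would next replace the given integral $\mu$-eigendivisor by the primitive integral generator $E$ of the ray it spans; this remains integral and still satisfies $f^*E=\mu E$ because $\mu\in\ZZ$. Since $E$ is primitive, a standard fact about free $\ZZ$-modules lets me extend it to a $\ZZ$-basis $\{H,E\}$ of $N^1(X)_\ZZ$ (modulo torsion). In this basis the matrix of $f^*$ takes the lower-triangular form $\bigl(\begin{smallmatrix} a & 0 \\ b & \mu \end{smallmatrix}\bigr)$ for some $a,b\in\ZZ$, whose eigenvalues are $a$ and $\mu$. Because $\lambda\neq \mu$, comparison of characteristic polynomials forces $a=\lambda$.

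Finally, solving $(f^*-\lambda I)v=0$ in coordinates gives $bv_1+(\mu-\lambda)v_2=0$, so the vector $(\lambda-\mu,\,b)$ is a $\lambda$-eigenvector with integer entries. Translating back, the divisor class
\[
D=(\lambda-\mu)\,H+b\,E \in N^1(X)_\ZZ
\]
is integral and satisfies $f^*D=\lambda D$ in $N^1(X)_\QQ$, as required.

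I do not expect a genuine obstacle here: the argument is essentially a compact linear-algebra computation made available by the Picard number $2$ hypothesis, together with the integrality of the characteristic polynomial of a lattice-preserving map. The only point that requires any care is ensuring a primitive integral eigenvector extends to a $\ZZ$-basis of $N^1(X)_\ZZ$, which is standard.
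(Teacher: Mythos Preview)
Your proof is correct and follows essentially the same route as the paper's: both establish $\lambda\in\ZZ$ via integrality of the characteristic polynomial of $f^*$ on the lattice $N^1(X)_\ZZ$, then write $f^*$ in a basis containing the integral $\mu$-eigendivisor and read off the integral $\lambda$-eigendivisor $(\lambda-\mu)H+bE$. The only cosmetic differences are that the paper deduces $\lambda\in\ZZ$ from the determinant (plus the observation that $\lambda$ is an algebraic integer) rather than from the trace, and uses an arbitrary integral ample divisor in place of your primitive extension to a $\ZZ$-basis; your version is slightly cleaner on both counts.
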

\begin{proof}
First note that

\[\lambda\mu=\deg f^*=d\in \ZZ\]
So in particular as $\mu$ is an integer, we have that $\lambda$ is rational. On the other hand, $\lambda$ is an algebraic integer, so it is an integer. Now choose an integral ample divisor $L$ on $X$. Let 
\begin{enumerate}
	\item $f^*L=aL+bH$
	\item Set $D=uL+vH$ and $f^*D=\lambda D$
\end{enumerate}
Then we have that \[f^*(uL+vH)=uaL+ub+\mu vH=uaL+(ub+\mu v)H=\lambda uL+\lambda vH\]

Putting this together gives and using that $u\neq 0$ by our assumptions gives

\[ua=\lambda u,ub+\mu v=\lambda v\]

As $f^*$ is defined over $\ZZ$ and $L,H$ are integral we have that $b$ is an integer. We have that the the $\lambda$ eigenspace is given by

\[uL+\frac{ub}{\lambda-\mu}H\] 

Taking $u=\lambda-\mu$ we obtain an integral $\lambda$ eigendivisor

\[(\lambda-\mu)L+bH\]
\end{proof}

As an aside, this tells us that we can gain some information about $X$ from the existence of an surjective endomorphism $f$.

\begin{corollary}
Let $X$ be a smooth projective variety defined over $\bar{\QQ}$ with Picard number 2. Suppose that $X$ admits surjective endomorphism that is not an automorphism. Then both rays of the nef cone are irrational or both rays are rational.
\end{corollary}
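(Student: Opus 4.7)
The plan is to derive this corollary as a direct consequence of the preceding proposition via a contradiction argument. After replacing $f$ by $f^2$, I may assume that $f^*$ fixes each of the two boundary rays of $\Nef(X)$ (it permutes them since it is a linear isomorphism of $N^1(X)_\RR$ preserving the cone), with positive real eigenvalues $\lambda, \mu$. I then assume for contradiction that one ray is rational while the other is irrational.

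The rational ray is spanned by some primitive integral class $D \in N^1(X)_\ZZ$. Because $f^*$ is $\ZZ$-linear on the N\'eron--Severi lattice, the identity $f^*D = \mu D$ lives in $N^1(X)_\ZZ$, and primitivity of $D$ forces $\mu \in \ZZ$ (equivalently, $\mu$ is rational, being a rational algebraic integer). So I obtain the integer eigenvalue $\mu$ together with its integral eigendivisor $D$, exactly the hypothesis of the previous proposition \emph{provided} $\lambda \ne \mu$. Assuming $\lambda \ne \mu$, that proposition yields that $\lambda$ is also an integer with an integral eigendivisor $D'$; but the $\lambda$-eigenspace of $f^*$ on $N^1(X)_\RR$ is one-dimensional (since $\lambda \neq \mu$ in a rank-$2$ space), so this integral $D'$ spans the second ray, contradicting the hypothesis that the second ray is irrational.

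The main obstacle, and essentially the only subtle point, is the edge case $\lambda = \mu$. Since the two rays are linearly independent eigenvectors, $\lambda = \mu$ forces $f^* = \mu\,I$ on $N^1(X)_\RR$; the scalar action then makes every class an eigenvector and so does not by itself distinguish the two rays. I expect to rule this possibility out using additional input: the hypothesis that $f$ is not an automorphism gives $\mu \geq 2$, and together with Riemann--Hurwitz $R = (1-\mu)K_X \geq 0$ places $-K_X$ as a specific effective rational class whose position relative to the two rays forces symmetry; alternatively one can observe that the two extremal rays of $\Nef(X)$ are canonically determined by $X$ (hence defined over the same field as $X$), so after passing to an appropriate extension they must be Galois-conjugate and therefore of the same rationality type. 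Once this scalar case is dispatched, the contradiction from the preceding proposition in the case $\lambda \neq \mu$ completes the proof.
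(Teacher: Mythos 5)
Your main argument is correct and tracks what the paper evidently intends: after iterating so that $f^*$ fixes both rays of $\Nef(X)$ with positive eigenvalues, the rational ray is spanned by a primitive integral class $D$, which forces its eigenvalue $\mu$ to be an integer, and then (provided $\lambda\neq\mu$) the preceding proposition produces an integral divisor class in the $\lambda$-eigenspace; since that eigenspace is $1$-dimensional and spanned by the other ray, the other ray is rational. That is the intended derivation.

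The trouble is exactly where you locate it: the case $\lambda=\mu$. In that case $f^*=\mu\,I$ on $N^1(X)_\RR$, every ray is a $\mu$-eigenray, and the linear algebra of $f^*$ says nothing at all about whether the two boundary rays of the nef cone are rational. Neither of your sketched repairs closes this gap. The Riemann--Hurwitz computation $R=(1-\mu)K_X$ only tells you that $-K_X$ is (numerically) pseudoeffective; $-K_X$ could be zero, ample, or lie on either boundary ray, so its \emph{position} does not impose the symmetry you want. The ``Galois-conjugate rays'' idea also does not work as stated: the ray generators are real vectors that need not be algebraic, and even if they are, each ray is already \emph{individually} canonical (one is where the self-intersection or the relevant positivity product degenerates in one way, the other in another), so canonicity gives no reason for the two rays to be interchanged by a field automorphism. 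What is true is that $\lambda=\mu$ forces $\lambda\in\ZZ$ (the characteristic polynomial of $f^*$ on $N^1(X)_\QQ$ is $(t-\lambda)^2\in\QQ[t]$, and $\lambda$ is an algebraic integer), but this does not constrain the rays. Since the paper presents this statement as a corollary with no proof and its preceding proposition explicitly assumes $\lambda\neq\mu$, the scalar case appears to be an unaddressed gap on the paper's side as well; you were right to flag it, but you have not filled it, and as far as I can see nothing in the paper does either.
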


We have the following simple result.

\begin{proposition}\label{prop:smalleig1}
	Let $X$ be a normal projective variety defined over a number field $K$. Let $f\colon X\ra X$ be a surjective endomorphism with $\lambda_1(f)=\lambda>1$. Suppose $f^*D_\lambda=\lambda D_\lambda$ with $D_\lambda$ a non-trivial divisor class. Suppose $f^*D_\mu=\mu D_\mu$ where $0<\mu<1$ and $D_\lambda+D_\mu$ is ample. Then Kawaguchi-Silverman holds for $f$. 
\end{proposition}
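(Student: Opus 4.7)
The plan is to invoke Theorem~\ref{thm:canheight1}(4) directly: because $\lambda=\lambda_1(f)>1$ and $D_\lambda$ is a $\lambda$-eigendivisor (so $\lambda>\sqrt{\lambda_1(f)}$), the canonical height $\hat h_{D_\lambda}$ is defined and satisfies $\hat h_{D_\lambda}=h_{D_\lambda}+O(1)$, and for any $P\in X(\bar K)$ with Zariski-dense $f$-orbit it suffices to show $\hat h_{D_\lambda}(P)\neq 0$; part (4) of the theorem then delivers $\alpha_f(P)=\lambda_1(f)$.

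I would argue by contradiction. Suppose $\hat h_{D_\lambda}(P)=0$; I will show that the whole orbit $\OO_f(P)$ has bounded height with respect to the ample divisor $A:=D_\lambda+D_\mu$, contradicting Northcott. The functional equation $\hat h_{D_\lambda}\circ f=\lambda\,\hat h_{D_\lambda}$ forces $\hat h_{D_\lambda}(f^n(P))=0$ for every $n$, hence $h_{D_\lambda}(f^n(P))=O(1)$ uniformly in $n$. For the other eigenvector, functoriality of Weil heights combined with $f^*D_\mu\sim_\QQ\mu D_\mu$ gives $h_{D_\mu}\circ f=\mu\,h_{D_\mu}+O(1)$ with a uniform implicit constant; iterating and summing the geometric series, which converges because $0<\mu<1$, yields
\[
h_{D_\mu}(f^n(P))\;=\;\mu^n\,h_{D_\mu}(P)\;+\;O\!\left(\tfrac{1}{1-\mu}\right),
\]
bounded uniformly in $n$. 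Since $h_A=h_{D_\lambda}+h_{D_\mu}+O(1)$, we conclude that $h_A(f^n(P))$ is uniformly bounded along the orbit.

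Now $P$ is defined over some number field $L\supseteq K$, so all iterates lie in $X(L)$, and Northcott's theorem applied to the ample divisor $A$ forces $\OO_f(P)$ to be finite. This contradicts the Zariski-density of the orbit (note $\dim X\geq 1$ because $\lambda_1(f)>1$). Hence $\hat h_{D_\lambda}(P)\neq 0$ for every $P$ with dense orbit, and Theorem~\ref{thm:canheight1}(4) gives $\alpha_f(P)=\lambda_1(f)$, which is the Kawaguchi--Silverman conjecture for $f$.

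The essential technical point is the uniform error control in the iteration for $h_{D_\mu}$: the $O(1)$ in $h_{D_\mu}\circ f=\mu\,h_{D_\mu}+O(1)$ must be a constant independent of the point so that the geometric series with ratio $\mu<1$ yields a uniform bound, which is why I read the hypothesis as $f^*D_\mu\sim_\QQ\mu D_\mu$. If only numerical equivalence were available, one would instead have $h_{D_\mu}\circ f=\mu\,h_{D_\mu}+O(\sqrt{h_X^+})$ and a more delicate argument would be required, though the contracting factor $\mu<1$ should still absorb the accumulated error. The other conceivable route would be to prove $\kappa(D_\lambda)>0$ and invoke Proposition~\ref{prop:yohsuke}, but showing bigness of $D_\lambda$ beyond nefness (which follows easily from $D_\lambda=\lim_{n}(D_\lambda+(\mu/\lambda)^n D_\mu)$ being a limit of amples) seems strictly harder than the height-theoretic route above.
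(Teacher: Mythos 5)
Your proof is correct and takes essentially the same route as the paper: for a point $P$ with dense orbit you argue by contradiction, supposing $\hat h_{D_\lambda}(P)=0$; you use the contraction $h_{D_\mu}\circ f=\mu h_{D_\mu}+O(1)$ with $0<\mu<1$ to bound $h_{D_\mu}$ along the orbit, combine with $h_H=h_{D_\lambda}+h_{D_\mu}+O(1)$ and the vanishing of $\hat h_{D_\lambda}$ on the orbit to bound the ample height, invoke Northcott to get a finite orbit, and conclude via Theorem~\ref{thm:canheight1}(4) that $\alpha_f(P)=\lambda_1(f)$. The only differences are cosmetic: you are more explicit about the $O(1)$ bookkeeping in the geometric-series summation and about citing Northcott by name, whereas the paper writes the estimate as an equality and simply says ``$\mathcal{O}_f(P)$ is a finite set.''
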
	
\begin{proof}
	By assumption $H=D_\lambda+D_\mu$ is ample. Now let $P$ be a point with a dense forward orbit. Then since $\mu<1$ we have that $-C<h_{D_\mu}(f^n(P))<C$ for some positive constant $C$ and all $n$. This follows from
	
	\[\mu h_{D_\mu}(P)-C^\prime<h_{D_\mu}(f(P))<\mu h_{D_\mu}(P)+C^\prime\]
	
	for some $C^\prime$ applied iteratively. Now assume for a contradiction that the canonical height function $\hat{h}_{D_\lambda}(P)=0$. Then we have that the ample height $h_H$ is bounded on $\OO_f(P)$ as
	
	\[h_H(f^n(P))=\hat{h}_{D_\lambda}(f^n(P))+h_{D_\mu}(f^n(P))=\lambda^n\hat{h}_{D_\lambda}(P)+h_{D_\mu}(f^n(P))<C \]
	
	Since $H$ is an ample height we have that $\OO_f(P)$ is a finite set contradicting that the orbit is dense. The result follows.
\end{proof}

\begin{proposition}\label{prop:nonintprop1}
	Let $X$ be a normal projective variety defined over a number field $K$. Suppose that $X$ is of Picard number 2. Let $f\colon X\ra X$ be a non-int amplified morphism with $\lambda_1(f)$ not an integer. Then Kawaguchi-Silverman holds for $f$. 
\end{proposition}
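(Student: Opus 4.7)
The plan is to diagonalise $f^*$ on $N^1(X)_\RR$, show that one eigenvalue exceeds $1$ while the other lies in $(0,1)$, and then apply Proposition~\ref{prop:smalleig1} directly. Since $\lambda_1(f)\ge 1$ and $1\in\ZZ$, the hypothesis forces $\lambda_1(f)>1$. Because $f^*$ preserves the lattice $N^1(X)_\ZZ$, its characteristic polynomial lies in $\ZZ[t]$, and because $\rho(X)=2$ it is quadratic. Irrationality of $\lambda_1(f)$ makes this polynomial irreducible over $\QQ$, so the two eigenvalues $\lambda,\mu$ are Galois-conjugate quadratic algebraic integers.

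First I would rule out a non-real conjugate pair: if $\lambda,\mu\notin\RR$, then $|\lambda|=|\mu|=\lambda_1(f)>1$, so every eigenvalue of $f^*$ exceeds $1$ in modulus. That is the definition of int-amplified and contradicts the hypothesis. Hence $\lambda,\mu\in\RR$, and labelling so that $|\lambda|\ge|\mu|$ gives $|\lambda|=\lambda_1(f)>1$. The non-int-amplified hypothesis then forces $|\mu|\le 1$, and $\mu=\pm 1$ would make $\lambda=\pm\det(f^*)$ an integer, contradicting irrationality. So $|\mu|<1$ strictly.

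Next I would use $\rho(X)=2$ to pin down signs. The cone $\Nef(X)$ is a full-dimensional pointed closed cone in $\RR^2$, so it has exactly two extreme rays, which $f^*$ either fixes or swaps. In the swap case, expressed in a basis of ray generators, $f^*$ is antidiagonal with positive entries $a,b$ and hence has eigenvalues $\pm\sqrt{ab}$; this forces $|\lambda|=|\mu|=\lambda_1(f)>1$, again contradicting non-int-amplification. Hence $f^*$ fixes both rays, whose generators $D_\lambda,D_\mu$ are then eigenvectors with positive eigenvalues — necessarily $\lambda$ and $\mu$, so $\lambda>1$ and $\mu\in(0,1)$. Since $D_\lambda$ and $D_\mu$ lie on distinct extreme rays, the sum $D_\lambda+D_\mu$ lies in the interior of $\Nef(X)$ and is therefore ample by Kleiman's criterion, so the hypotheses of Proposition~\ref{prop:smalleig1} are in force and give the Kawaguchi-Silverman conjecture for $f$.

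The substantive content is the two uses of the non-int-amplified hypothesis — to exclude complex-conjugate eigenvalues, and to exclude the ray-swapping case — both of which rely essentially on the characteristic polynomial of $f^*\mid_{N^1(X)_\RR}$ being quadratic. The remaining steps are routine linear algebra and convex geometry. The only issue to watch in a careful write-up is that the eigenvectors $D_\lambda,D_\mu$ should be taken as genuine divisor classes (or at least lifted appropriately) so that Proposition~\ref{prop:smalleig1} can be applied as stated; here $\lambda$ being the unique eigenvalue of largest absolute value makes Corollary~\ref{cor:findingeigenvalue1} available for $D_\lambda$, and the $\mu$-eigenvector is constructed from an ample class minus a suitable multiple of $D_\lambda$.
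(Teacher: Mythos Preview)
Your proposal is correct and follows essentially the same route as the paper: show that the two eigenvalues of $f^*$ on $N^1(X)_\RR$ are real and positive with $\lambda>1$ and $0<\mu<1$, identify the boundary rays of $\Nef(X)$ as the eigenvectors, and invoke Proposition~\ref{prop:smalleig1}. The only cosmetic difference is that the paper first replaces $f$ by an iterate to force positive eigenvalues (relying on the standard fact that Kawaguchi--Silverman for $f^n$ implies it for $f$), whereas you argue directly that the non-int-amplified hypothesis already excludes both the complex-conjugate and ray-swapping cases; your direct argument is slightly longer but avoids the iteration step.
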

\begin{proof}
	After iterating $f$ we may assume that the eigenvalues of $f^*$ are positive. Suppose $\lambda=\lambda_1(f)>1$. If $f^*$ is assumed to be non-int amplified with eigenvalues $\lambda,\mu$ then $\mu\leq 1$. If $\mu=1$ then $\lambda$ is also an integer and the characteristic polynomial for $f$ splits over $\ZZ$. It follows that $f^*$ is diagonalizable over $\QQ$ and so $f^*$ would have integral eigendivisors. Thus we have that $\mu<1$. Let $f^*D_\lambda=\lambda D_\lambda$ and $f^*D_\mu=\mu D_\mu$ with $D_\lambda,D_\mu$ nef divisor classes with $D_\lambda+D_\mu$ ample. Then $H=D_\lambda+D_\mu$ is ample. By (\ref{prop:smalleig1}) the result follows.
\end{proof}

When $\kappa(X)<0$ as one has the ramification divisor to work with. We now turn to this case and argue by cases.
 Suppose now that $\kappa(X)<0$ and $\lambda_1(f)=\lambda>1$. First suppose that $f$ is etale. Then by adjunction $f^*K_X=K_X$. Since we are assuming that $\rho(X)=2$ we can assume now that $K_X$ or $-K_X$ is nef.  If $K_X$ is nef we have contradicted the abundance conjecture. So we assume that $K_X$ is not nef.

\begin{proposition}\label{prop:unramifiedendomorphism}
Let $X$ be a normal projective variety defined over $\bar{\QQ}$ that is $\QQ$-factorial with at worst terminal singularities of Picard number 2. Let $\kappa(X)<0$. 
Let $f\colon X\ra X$ be an unramified surjection. Assume that $K_X$ is not nef. (For example assume the abundance conjecture) then Kawaguchi Silverman holds for $f$.
\end{proposition}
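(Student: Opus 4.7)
My plan is to build a $\QQ$-Cartier $\lambda_1(f)$-eigendivisor of positive Iitaka dimension and then invoke Proposition~\ref{prop:yohsuke}. The \'etale hypothesis enters through adjunction: the ramification divisor $R_f$ vanishes, so $K_X \sim f^*K_X + R_f$ gives $f^*K_X \sim K_X$ in $\Pic(X)_\QQ$, making $K_X$ a $1$-eigenvector of $f^*$. If $\lambda_1(f)=1$ the statement is trivial, so set $\lambda:=\lambda_1(f)>1$. Since $\rho(X)=2$ and $K_X$ is not numerically trivial (else it would be nef), the characteristic polynomial of $f^*|_{N^1(X)_\RR}$ is $(t-\lambda)(t-1)$ with each eigenspace one-dimensional, and $K_X$ spans the $1$-eigenspace.

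Next I would replace $f$ by an iterate (harmless for KSC) so that $f^*$ fixes the two rays $N_1,N_2$ of $\Nef(X)$ and dually $f_*$ fixes the two rays of $\NE(X)$. Each $N_i$ is then an eigenvector; I label so that $N_1$ is the $1$-eigenray. Numerically $K_X$ is a scalar multiple of $N_1$, and the assumption that $K_X$ is not nef forces the scalar to be negative, so $-K_X$ is nef. The cone and contraction theorems for $\QQ$-factorial terminal varieties now produce a $K_X$-negative extremal ray $R\subset\NE(X)$; by the $\rho=2$ duality together with $K_X\equiv -cN_1$, $c>0$, the ray $R$ is the one dual to $N_2$. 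Let $\phi_R\colon X\to Y$ be the associated extremal contraction. Then $Y$ is a normal projective variety with $\rho(Y)=1$, $\phi_R$ has connected fibers, $\dim Y\geq 1$, and the line $\phi_R^*\Pic(Y)_\QQ$ is the $N_2$-line in $N^1(X)_\RR$.

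Since $f_*R=R$, the lemma of \cite{1902.06071} recalled in Section~2 produces a descent $g\colon Y\to Y$ with $\phi_R\circ f=g\circ\phi_R$. Choose an ample Cartier $H_Y$ on $Y$ and set $D:=\phi_R^*H_Y\in\Pic(X)_\QQ$. Because $\rho(Y)=1$, $g^*H_Y\sim_\QQ\mu H_Y$ for some positive rational $\mu$, so $f^*D=\phi_R^*g^*H_Y\sim_\QQ\mu D$ in $\Pic(X)_\QQ$; the numerical class of $D$ lies along $N_2$, which is the $\lambda$-eigenspace, forcing $\mu=\lambda$. Connectedness of the fibers of $\phi_R$ gives $\kappa(D)=\kappa(H_Y)=\dim Y\geq 1$, and Proposition~\ref{prop:yohsuke} then yields KSC for the iterate of $f$, hence for $f$ itself. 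The step I expect to require the most care is the promotion of a numerical $\lambda$-eigenvector on $N_2$ to an honest $\Pic(X)_\QQ$ eigendivisor with $f^*D\sim_\QQ\lambda D$; the \'etale hypothesis addresses this indirectly by pinning $K_X$ to the $1$-eigenspace, which forces the $\lambda$-eigenspace to coincide with the pullback of the $K_X$-negative extremal contraction and so produces the required $D$ directly from Mori theory.
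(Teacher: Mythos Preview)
Your strategy matches the paper's: use the \'etale hypothesis and adjunction to pin $K_X$ to the $1$-eigenspace, identify the $\lambda$-eigenray with a boundary ray of $\Nef(X)$, and contract the corresponding $K_X$-negative extremal ray to obtain a commuting square over a base $Y$ with $\rho(Y)=1$. The gap is in your final step. You assert that $\rho(Y)=1$ forces $g^*H_Y\sim_\QQ\mu H_Y$, but Picard rank $1$ only yields $g^*H_Y\equiv_{\textnormal{num}}\mu H_Y$; the difference lives in $\Pic^0(Y)_\QQ$, which need not vanish. Take $X=C\times\PP^1$ with $C$ an elliptic curve and $f=[2]_C\times\iden_{\PP^1}$: this is \'etale with $\lambda_1(f)=4$, the contraction is the first projection to $Y=C$, and for a non-torsion point $P\in C$ one has $[2]^*[P]\equiv_{\textnormal{num}}4[P]$ but $[2]^*[P]\not\sim_\QQ 4[P]$. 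Hence $f^*D\sim_\QQ\lambda D$ fails for $D=\phi_R^*[P]$ and Proposition~\ref{prop:yohsuke} does not apply. Your closing remark that ``$D$ comes directly from Mori theory'' does not close this: Mori theory hands you an honest Cartier $D=\phi_R^*H_Y$, but the relation $f^*D=\phi_R^*g^*H_Y$ still depends on what $g^*$ does inside $\Pic(Y)_\QQ$, not merely $N^1(Y)_\QQ$.

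The fix is exactly the paper's endgame. Once you have the square and know that $\phi_R^*H_Y$ sits on the numerical $\lambda$-eigenray, the computation $f^*\phi_R^*H_Y=\phi_R^*g^*H_Y\equiv_{\textnormal{num}}\lambda\,\phi_R^*H_Y$ already gives $\lambda_1(g)=\lambda=\lambda_1(f)$. Since $\rho(Y)=1$, the Kawaguchi--Silverman conjecture holds for $g$, and Corollary~\ref{cor:standard1} finishes the argument for $f$. This sidesteps the $\Pic$ versus $N^1$ issue entirely because the dynamical-degree comparison is purely numerical. (The paper also first reduces to $\Nef(X)=\PEff(X)$ via Proposition~\ref{prop:nef=peff} so that the contraction is of fibering type and Lemma~\ref{lemma:iterationlemma} applies; your appeal to the descent lemma of \cite{1902.06071} is a legitimate alternative that avoids that reduction.)
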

\begin{proof}
We may assume that $\Nef(X)=\PEff(X)$ by (\ref{prop:nef=peff}). Since $f$ is unramified we have that the action of $f^*$ has two eigenvalues $1$ and $\lambda$ and we may assume that $\lambda>1$. Since $K_X$ is not nef we have that $-K_X$ is nef, and since $f^*(-K_X)=-K_X$ we may assume that $-K_X$ is not ample, as otherwise $f^*$ preserves an element in the interior of the big cone which by our assumptions is the ample cone which would imply that $\lambda_1(f)=1$ in this case. Thus we may assume that $-K_X$ is nef but not big. Let $D_\lambda$ be a numerical class corresponding to the $\lambda$-eigenvalue. Since $-K_X$ is nef we can contract one of the rays of the cone of curves via say $\phi\colon X\ra Y$. Since we cannot contract the ray generated  by $-K_X$ we have contracted the ray associated to $D_\lambda$.  Since we have assumed that $D_\lambda$ is not big, the contraction is of fibering type. After iterating $f$ we obtain a diagram

\[\xymatrix{X\ar[r]^f\ar[d]_\phi & X\ar[d]^\phi\\ Y\ar[r]_g & Y}\]

where $\phi\colon X\ra Y$ is a Mori fiber space. In this case $Y$ has Picard number 1 and $\phi^*H=D_\lambda$ for some ample $H\in N^1(Y)_\RR$. It follows that $\lambda_1(f)=\lambda_1(g)$ and so Kawaguchi-Silverman follows by (\ref{cor:standard1}). 
\end{proof}

We may now assume that $f$ is not etale. So $f^*K_X+R=K_X$ where $R$ is a non-zero effective $R$ divisor. We deal with the case $\Alb(X)=0$. In this example we see through a basic computation how the good eigenspace condition (and thus case $\textnormal{TIR}_n$) may arises naturally.

\begin{proposition}\label{prop:ramifiedpicardnum2}
	Let $X$ be a normal $\QQ$-factorial projective variety defined over a number field $K$ with at worst klt singularities and $\rho(X)=2,\ \kappa(X)<0, \Alb(X)=0$. Let $f\colon X\ra X$ be a surjective ramified endomorphism that is not int-amplified. In other words the eigenvalues of $f^*$ are $\lambda,\mu$ with $\mid \mu\mid \leq 1$. Suppose that $f$ has a good eigenspace. Then the Kawaguchi-Silverman conjecture is true for $f$. 
\end{proposition}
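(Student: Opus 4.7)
The plan is to upgrade the good eigenspace data to a $\lambda_1(f)$-eigendivisor of positive Iitaka dimension and then invoke Proposition \ref{prop:yohsuke}. Write $\lambda = \lambda_1(f) > 1$. By Proposition \ref{prop:nonintprop1} I may assume $\lambda \in \ZZ$. Since $f$ is surjective, $f^*$ is an isomorphism on $N^1(X)_\RR$, so $\lambda\mu = \det\bigl(f^*\mid_{N^1(X)_\RR}\bigr)$ is a nonzero integer. As $\mu$ is an algebraic integer this forces $\mu \in \ZZ$, and combined with $\mid \mu \mid \leq 1$ and $\mu \neq 0$ we conclude $\mu \in \{-1, +1\}$, so in particular $\mu^2 = 1$. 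Since $\rho(X) = 2$ and $\lambda \neq \mu$, the $\lambda$-eigenspace of $f^*$ on $N^1(X)_\RR$ is one-dimensional; let $D_\lambda$ be the integral nef $\lambda$-eigendivisor provided by the good eigenspace, and pick any $\mu$-eigendivisor $D_\mu$, so that $\{D_\lambda, D_\mu\}$ is a $\QQ$-basis of $\Pic(X)_\QQ$ (identified with $N^1(X)_\QQ$ because $\Alb(X) = 0$ gives $\Pic^0(X) = 0$).

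Next I exploit the ramification formula. Since $f$ is ramified, $R_f := K_X - f^*K_X$ is a nonzero effective $\QQ$-Cartier divisor. Writing $K_X = c_\lambda D_\lambda + c_\mu D_\mu$ in $\Pic(X)_\QQ$, a direct computation gives
\[ R_f + f^*R_f = c_\lambda(1-\lambda)(1+\lambda)D_\lambda + c_\mu(1-\mu)(1+\mu)D_\mu = c_\lambda(1-\lambda^2)D_\lambda, \]
where the $D_\mu$-coefficient vanishes because $\mu^2 = 1$. The left-hand side is a sum of effective divisors, hence effective. A short case check rules out $c_\lambda = 0$: if $\mu = 1$ then $c_\lambda = 0$ would give $R_f = 0$, contradicting ramification; if $\mu = -1$ then $c_\lambda = 0$ would yield $f^*R_f = -R_f$, which upon intersecting with a power of an ample divisor produces a nonnegative number equal to a strictly negative one, a contradiction.

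Thus $c_\lambda(1-\lambda^2)D_\lambda$ is a nonzero effective class that is a rational scalar multiple of $D_\lambda$. If the scalar were negative, then $-D_\lambda$ would admit an effective rational multiple; combined with $D_\lambda$ being nef (hence pseudoeffective), the pointedness of $\PEff(X)$ would force $D_\lambda \equiv 0$ numerically, and then $\Pic^0(X) = 0$ would give $D_\lambda = 0$ in $\Pic(X)_\RR$, contradicting the good eigenspace hypothesis. Hence the scalar is positive, a positive rational multiple of $D_\lambda$ is effective, and $\kappa(D_\lambda) \geq 0$. Combined with $\kappa(D_\lambda) \neq 0$ from the good eigenspace, we obtain $\kappa(D_\lambda) \geq 1$, and Proposition \ref{prop:yohsuke} concludes the Kawaguchi-Silverman conjecture for $f$. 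The main obstacle is the sign analysis in the last step, which leans crucially on $\Alb(X) = 0$ to upgrade numerical equivalence to linear equivalence and on the pointedness of $\PEff(X)$ to rule out the contrary sign.
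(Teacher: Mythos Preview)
Your argument is correct, and it takes a genuinely different route from the paper's proof. The paper first iterates $f$ so that the eigenvalues are positive and $\Nef(X)=\PEff(X)$, then splits into cases according to whether $K_X$ and $R$ are linearly independent: when they are, it writes down the matrix of $f^*$ in the basis $\{K_X,R\}$, disposes of the possibility $\mu<1$ via Proposition~\ref{prop:smalleig1}, and for $\mu=1$ computes directly that $f^*R=\lambda R$, so the ramification divisor itself is the nef $\lambda$-eigendivisor; the dependent case yields $-K_X$ as an effective $\lambda$-eigendivisor.

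You instead avoid both the iteration and the case split. After pinning down $\mu\in\{\pm 1\}$ via Proposition~\ref{prop:nonintprop1} and integrality, you use the averaging trick $R_f+f^*R_f$ to annihilate the $D_\mu$-component (since $1-\mu^2=0$) and land an effective class in the $\lambda$-eigenline. The sign analysis via pointedness of $\PEff(X)$ together with $\Pic^0(X)=0$ then gives $\kappa(D_\lambda)\geq 0$, and the good eigenspace hypothesis upgrades this to $\kappa(D_\lambda)>0$.

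What each approach buys: the paper's computation, in the $\mu=1$ case, identifies the eigendivisor concretely as $R$ (or $-K_X$), which is a bit more informative. Your argument is cleaner and more uniform: it handles $\mu=\pm 1$ simultaneously, does not require the reduction $\Nef(X)=\PEff(X)$, and makes no case distinction on the linear dependence of $K_X$ and $R$. It also makes transparent that the $\mu<1$ branch in the paper's proof is in fact vacuous under the stated hypotheses, since good eigenspace plus $\Alb(X)=0$ already force $\lambda\in\ZZ$ and hence $\mu\in\{\pm 1\}$.
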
 
\begin{proof}
 We may iterate $f$ and assume that $\lambda,\mu$ are positive and that the pseudoeffective and nef cones coincide. Suppose first that $K_X$ and $R$ are linearly independent. The matrix of $f^*$ with respect to the basis $K_X,R$ is then

\[\begin{bmatrix}1 & a\\-1 & b \end{bmatrix}\] 

That is we assume $f^*R=aK_X+bR$. By assumption we have that $\mu\leq 1$. If $\mu<1$ then (\ref{prop:smalleig1}) tells us that the Kawaguchi-Silverman conjecture is true for $f$. So we take $\mu=1$ and we may assume that $V_H=\Pic(X)_\QQ$ as otherwise $f$ has an ample eigendivisor. Then $b+1=\lambda+1$ and $b-a=\lambda$. So $b=\lambda$ and $a=0$. So in particular, the $\lambda$-eigenspace contains $R$. Since $R$ is also nef and the $\lambda$ eigenspace is 1-dimensional we have that $\kappa(R)>0$ by the assumption that $f$ has a good eigenspace. By (\ref{prop:yohsuke}) the Kawaguchi-Silverman conjecture is true for $f$. Now suppose that $R=aK_X$ for some $a$. Since $R$ is effective and $K_X$ is not we have that $a<0$ or that $b(-K_X)=R$ for some $b>0$. Then $-K_X$ is effective and $f^*K_X=K_X-R=K_X+bK_X=(b+1)K_X$. Thus we have that $f^*(-K_X)=(b+1)(-K_X)$. Since $b+1>1$ we have that $\lambda=b+1$. Therefore as above $R$ generates the $\lambda$-eigenspace and we have the same conclusion.
\end{proof}

\section{Projective Bundles}

The methods developed in this article have applications in the case of projective bundles. Given a variety $X$ with Picard number 1, projective bundles are one way to construct Picard number 2 varieties with a surjective morphism to $X$. They provide one systemic way of constructing potentially dynamically interesting surjective morphisms. The following result  gives another illustration of how one might reduce to the good eigenspace case.

\begin{theorem}\label{thm:bundlethm1}
Let $X$ be a smooth projective variety with Picard number $1$ and let $E$ be a nef vector bundle on $X$ with $H^0(X,E)\neq 0$ such that $E$ is not ample and $\kappa(\PP E,-K_{\PP E})\geq 0$. Suppose that there is integral ample divisor $H$ such that $f^*\mid_{V_H}$ has a good eigenspace. Then the Kawaguchi-Silverman conjecture holds for $X$. 
\end{theorem}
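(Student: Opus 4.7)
The plan is to split into cases according to how $\lambda_1(f)$ interacts with the projective bundle structure $\pi\colon\PP E\to X$, using the Picard-number-one hypothesis on $X$ to handle one case and the good eigenspace hypothesis together with the ancillary positivity assumptions to handle the other.

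First, since $E$ is nef but not ample, $\OO_{\PP E}(1)$ lies on the boundary of $\Nef(\PP E)$ and $\pi$ is a Mori fiber space (the fiber class is $K_{\PP E}$-negative by the standard adjunction for projective bundles). By Lemma~\ref{lemma:iterationlemma} I may replace $f$ by an iterate so that a morphism $g\colon X\to X$ exists with $\pi\circ f=g\circ \pi$; after one further iterate I may assume $f^{*}$ fixes each of the two boundary rays of $\Nef(\PP E)$, namely $\OO(1)$ and $\pi^{*}H_X$ for $H_X$ an ample generator of $N^{1}(X)$. Writing $\nu$ for the scalar by which $f^{*}$ acts on the $\OO(1)$-quotient and $\mu=\lambda_1(g)$ for the scalar on $\pi^{*}H_X$, we have $\lambda_1(f)=\max(\nu,\mu)$.

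\emph{Easy case: $\nu\leq\mu$.} Then $\lambda_1(f)=\lambda_1(g)$, and since $\rho(X)=1$ the KSC holds for $g$ by the Picard-number-one result quoted in the background. Corollary~\ref{cor:standard1} then yields the KSC for $f$.

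\emph{Hard case: $\nu>\mu$.} Now $\nu$ is a simple eigenvalue of $f^{*}$ on $N^{1}(\PP E)_\RR$, and by the good eigenspace hypothesis the $\nu$-eigenspace of $f^{*}|_{V_H}$ is one-dimensional, spanned by an integral nef class $D\in V_H$ with $f^{*}D\sim_\QQ \nu D$ and $\kappa(D)\neq 0$. Thus $\kappa(D)\in\{-\infty\}\cup\{1,2,\ldots\}$, and to invoke Proposition~\ref{prop:yohsuke} it suffices to rule out $\kappa(D)=-\infty$, i.e., to show that some positive multiple of $D$ is effective. Writing $D\sim a\OO(1)+\pi^{*}L$ with $a>0$ and $L\in\Pic(X)$ whose numerical class is a non-negative multiple of $H_X$, the projection formula reduces this to producing a nonzero section of $\sym^{ma}E\otimes L^{\otimes m}$ for some $m\geq 1$. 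The hypothesis $H^{0}(X,E)\neq 0$ supplies sections of all positive symmetric powers of $E$, and the hypothesis $\kappa(\PP E,-K_{\PP E})\geq 0$ supplies sections of $\sym^{mr}E\otimes\OO_X(-m(K_X+\det E))$ for suitable $m$; together with the ampleness of $H_X$, these effective classes exhaust enough of the nef cone to cover $D$. Once $\kappa(D)\geq 0$ is established, the good eigenspace condition $\kappa(D)\neq 0$ upgrades to $\kappa(D)\geq 1$ and Proposition~\ref{prop:yohsuke} completes the argument.

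The main technical obstacle will be the effectiveness step, which is delicate when $\Pic^{0}(X)\neq 0$ (as occurs when $X$ is an elliptic curve, handled separately in part (4) of the main theorem): $L$ may then differ from a positive multiple of $H_X$ by a non-torsion degree-zero class, so numerical positivity alone does not produce sections. The combined hypotheses $H^{0}(X,E)\neq 0$ and $\kappa(-K_{\PP E})\geq 0$ are precisely the tools needed to exhibit genuine, not merely numerical, effective representatives inside the $\nu$-eigendirection, and reconciling them with the specific representative $D$ chosen by the good eigenspace is where the real work sits.
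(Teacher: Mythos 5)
Your skeleton is sound and matches the paper's opening moves: reduce to the case $\lambda_1(f)>\lambda_1(g)$ via Corollary~\ref{cor:standard1} and $\rho(X)=1$, observe that $\OO_{\PP E}(1)$ sits on a boundary ray of $\Nef(\PP E)$ opposite $\pi^{*}H_X$, and then try to locate an integral nef $\lambda_1(f)$-eigendivisor with $\kappa>0$ so that Proposition~\ref{prop:yohsuke} finishes. But the step you flag as ``where the real work sits'' is in fact a genuine gap: you never produce an effective multiple of $D$, and the mechanism you sketch does not work as stated. Having $H^{0}(X,E)\neq 0$ gives a section of $\OO_{\PP E}(1)$, and $\kappa(-K_{\PP E})\geq 0$ gives sections of multiples of $-K_{\PP E}$, but these two divisors are specific points of the $\lambda$-eigenray; they do not let you ``exhaust enough of the nef cone to cover'' an arbitrary eigenrepresentative $D\sim a\OO(1)+\pi^{*}L$ with $L$ a possibly non-torsion element of $\Pic^{0}(X)$. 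The twist by $L$ is exactly what kills the projection-formula computation, and nothing in the stated hypotheses pins $L$ down without further argument.

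The paper's proof supplies the missing identification of $D$ by casing on $K_X$. If $K_X$ is ample the question is vacuous (no dense orbit). If $-K_X$ is ample, then $\Alb(X)=\Alb(\PP E)=0$, so numerical and $\QQ$-linear equivalence agree; hence $\McL=\OO_{\PP E}(1)$ itself is a genuine $\QQ$-linear eigendivisor, it has a section because $H^{0}(X,E)\neq 0$, and the good-eigenspace hypothesis (via Lemma~\ref{lem:DinVH} putting $\McL\in V_H$) upgrades $\kappa(\McL)\geq 0$ to $\kappa(\McL)>0$. The remaining case $K_X=0$ is the one where $\Pic^{0}$ can be nontrivial, and there the paper does \emph{not} work with an abstract $D$: it takes $D=-K_{\PP E}$, which is a canonical candidate because ramification forces $f^{*}(-K_{\PP E})=-K_{\PP E}+R$ with $R$ effective. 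Writing the (unique, since $\kappa=0$) effective representative $-K_{\PP E}=\sum a_iD_i$, the paper shows the support is totally invariant, deduces $f^{*}D_i=\mu_iD_i$ as actual divisors for prime $D_i$, and reads off $\mu_i=\lambda$ from the fact that each $D_i$ is numerically proportional to $\McL$. This yields the exact linear relation $f^{*}(-K_{\PP E})\sim\lambda(-K_{\PP E})$, after which Lemma~\ref{lem:TIRLem} combined with the good eigenspace gives $\kappa(-K_{\PP E})>0$. So the missing ingredient in your argument is precisely this identification: the good eigenspace hypothesis plus $\Alb$-type rigidity forces the eigendivisor to \emph{be} $\McL$ or $-K_{\PP E}$, not merely to be numerically proportional to them, and only then do the two positivity hypotheses do their job.

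Two smaller points. You should justify why $\nu$ and $\mu$ exhaust the eigenvalues of $f^{*}$ on $N^{1}(\PP E)_\RR$ before asserting $\lambda_1(f)=\max(\nu,\mu)$; the paper handles the possibility of a permuting action (pseff $\neq$ nef) separately via Theorem~\ref{thm:conethm} before assuming the rays are fixed. And in the easy case you need $\lambda_1(g)>1$ to apply the Picard-number-one result nontrivially; if $\lambda_1(f)=\lambda_1(g)=1$ the KSC is trivially true, so this is harmless but worth noting.
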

\begin{proof}
We may assume that the pseudo-effective and nef cones coincide, otherwise we may iterate $f$ and apply (\ref{thm:conethm}) to obtain that $f^*$ will act by a dilation. Put $\McL=\OO_{\PP E}(1)$. By assumption $\McL$ is nef, but not ample. Therefore $\McL$ generates a ray of $\Nef(\PP E)$ as $N^1(\PP E)_\RR$ is a 2-dimensional vector space. Let $f\colon \PP E\ra \PP E$ be a surjective endomorphism and put $\pi\colon \PP E\ra X$ for the bundle projection. After iterating $f$ we may also assume that there is some surjective $g\colon X\ra X$ with $\pi\circ f=g\circ \pi $.  We may assume that $\lambda=\lambda_1(f)>\lambda_1(g)\geq 1$ by (\ref{cor:standard1}). We have that $f^*\McL\equiv_{\textnormal{Num}}\lambda\McL$. If $K_X\neq 0$ then either $K_X$ is ample or $-K_X$ is ample. If $K_X$ is ample, then $X$ is of general type. It is well known that in this case $g$ is a finite order automorphism. Since a dense orbit for $f$ implies a dense orbit for $g$ we see that no point of $\PP E$ has a dense orbit and the Kawaguchi-Silverman conjecture for $X$ is trivially true. Otherwise we may assume that $-K_X$ is ample and so $X$ is Fano. In this case $\Alb(X)=0$. Then $\Alb(\PP E)=0$ and so $f^*\McL\sim_\QQ \lambda \McL$. Since $\kappa(\McL)\geq 0$ and $\McL$ spans the $\lambda$-eigenspace of $f^*$ by assumption we have that $\McL\in V_H$ as $V_H$ contains a $\lambda$ eigendivisor. By assumption we have that $f^*\mid_{V_H}$ has a good Jordan block and so $\kappa(\McL)\neq 0$. As $\kappa(\McL)\geq 0$ we have that $\kappa(\McL)>0$ and we obtain the Kawaguchi-Silverman conjecture by (\ref{prop:yohsuke}). So we may assume that $K_X=0$. As $\kappa(-K_{\PP E})\geq 0$ we see that $-K_{\PP E}$ is pseudo-effective. If $-K_{\PP E}$ is big then it is also ample as we have assumed the nef cone and pseudo-effective cone coincide. In a Fano variety any nef divisor has an effective multiple.  Since we have assumed that $f$ has a good eigenspace we may apply (\ref{thm:effnefcone}) to obtain the Kawaguchi-Silverman conjecture. Thus we may assume that $-K_{\PP E}$ generates a boundary ray of the pseudo-effective cone which we have assumed is the nef cone. Therefore $f^*(-K_{\PP E})\equiv_{\textnormal{Num}}\lambda(-K_{\PP E})$.  Suppose first that $\kappa(-K_{\PP E})>0$. Then by \cite[Prposition 1.6]{1908.01605} the Kawaguchi-Silverman conjecture holds for $f$. Assume that $\kappa(-K_{\PP E})=0$. We have that $f^*(-K_{\PP E})-R=-K_{\PP E}$ where $R$ is some effective divisor.  The adjunction formula tells us that
\[f^*(-K_{\PP E})=-K_{\PP E}+R\]
We now use the argument found in \cite[Proposition 9.2 (2)]{1908.01605}. By \cite[Proposition 6.1]{1908.01605} applied to $-K_{\PP E}$ we have that $f^{-1}(\textnormal{supp}(-K_{\PP E}))=\textnormal{supp}(-K_{\PP E})$. Write $-K_{\PP E}=\sum_{i}a_iD_i$ where the $D_i$ are prime divisors and the $a_i>0$. Then after iterating $f$ we have that $f^{-1}(\textnormal{supp}(D_i))=\textnormal{supp}(D_i)$. Since $D_i$ is a prime divisor this tells us that $f^*D_i=\mu_i D_i$ for some number $\mu_i$. On the other hand. Since $-K_{\PP E}$ is not the pull back of some line bundle on $X$ we have that $-K_{\PP E}\equiv_{\textnormal{Num}} r\McL$ where $r$ is the rank of $E$. Thus we have that $D_i\equiv_{\textnormal{Num}}b_i\McL $ as $\McL$ is extremal. Since we have that $f^*D_i\equiv_{\textnormal{Num}}\lambda_i D_i$ we have that each $\mu_i=\lambda$. In conclusion we may write $-K_{\PP E}=D$ where $D$ is some effective $\QQ$-Cartier divisor with $f^*D=\lambda D$ and $\kappa(D)\geq 0$. As we have assumed the existence of a good eigenspace we may apply (\ref{lem:TIRLem}) to obtain $\kappa(D)\neq 0$ and so $\kappa(D)>0$. Therefore by $(\ref{prop:yohsuke})$ the Kawaguchi-Silverman conjecture follows. 
\end{proof}

The case of projective bundles over a Fano variety with Picard number one was treated by different methods in (\cite{2003.01161}). We now specialize to the case of elliptic curves. One has the following result which motivates the rest of our work.

\begin{theorem}\cite[Corollary 6.8]{1802.07388}\label{thm:JohnMatt}
	Let $C$ be a smooth curve. Then the following are equivalent.
	\begin{enumerate}
		\item The Kawaguchi-Silverman conjecture holds for surjective endomorphisms of projective bundles $\PP E$ where $E$ is a vector bundle on $C$.
		\item The Kawaguchi-Silverman conjecture holds for surjective endomorphisms of projective bundles $\PP E$ where $E$ is a semi-stable degree 0 vector bundle on $C$.
	\end{enumerate}
\end{theorem}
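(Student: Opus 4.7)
The implication $(1) \Rightarrow (2)$ is immediate since semistable degree-zero bundles form a subclass of vector bundles. For the converse, given an arbitrary vector bundle $E$ on $C$, the plan is to reduce KSC for $\PP E$ to KSC for projective bundles associated to semistable degree-zero bundles through three structural reductions.

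First, if $g(C) \geq 2$ then $\aut(C)$ is finite; after iteration any surjective $f\colon \PP E \ra \PP E$ restricts to the identity on $C$, acts fiberwise as a projective linear automorphism of $\PP^{r-1}$, and KSC follows by a direct fiberwise analysis. One is thus reduced to $C = \PP^1$ or $C$ elliptic.

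Second, if $E$ is unstable, the canonicity of the Harder-Narasimhan filtration $0 = F_0 \subset F_1 \subset \cdots \subset F_k = E$ forces some iterate of $f$ to preserve each subbundle $\PP F_i \subset \PP E$. Blowing up $\PP F_{k-1}$ yields a variety $\widetilde{\PP E}$ with a birational morphism to $\PP E$ and a morphism onto the projective bundle $\PP(E/F_{k-1})$, which has strictly smaller fiber dimension. The lift of $f$ commutes with the latter projection, and an induction on rank using (\ref{cor:standard1}), together with birational invariance of KSC for morphisms, reduces to the case where $E$ is itself semistable.

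Third, twist $E$ by a line bundle to shift $\deg E$ by multiples of $r = \rk E$, normalizing $0 \leq \deg E < r$. For $C = \PP^1$, semistability forces $E \cong \OO(a)^{\oplus r}$ by Grothendieck's splitting theorem, which after twist is $\OO^{\oplus r}$ of degree zero. For $C$ elliptic, Atiyah's classification expresses a semistable bundle of rank $r$ and degree $d$ as a pushforward under an isogeny of a lower-rank bundle whose rank and degree have the same $\gcd$ as $(r,d)$; after pulling back by the isogeny one obtains a bundle splitting off a degree-zero semistable factor, and $f$ lifts compatibly after a further iteration, yielding a commuting diagram to which (\ref{cor:standard1}) applies and reducing to case (2).

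The principal obstacle is the dynamical-degree bookkeeping at each reduction: the blow-up in the second step and the isogeny pullback in the third step change the ambient variety, and (\ref{cor:standard1}) requires the equality $\lambda_1(f) = \lambda_1(g)$ for the induced endomorphism $g$ on the reduced target. Verifying this equality means computing relative dynamical degrees at each step and showing they vanish, and the argument must handle the residual case of positive relative degree by a direct analysis of orbits in the fiber, since then (\ref{cor:standard1}) is not directly applicable.
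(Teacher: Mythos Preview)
This theorem is not proved in the present paper; it is imported from \cite[Corollary~6.8]{1802.07388} and used as a black box, so there is no in-paper argument to compare against.

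Your outline has a real gap in the second step. You claim that canonicity of the Harder--Narasimhan filtration forces some iterate of $f$ to preserve each $\PP F_i\subset\PP E$, which is what you need to lift $f$ to the blow-up along $\PP F_{k-1}$. But canonicity only means the filtration is uniquely determined by $E$; it says nothing about how an endomorphism of the projectivization interacts with it. A surjective $f\colon\PP E\to\PP E$ of relative degree $m>1$ over $g\colon C\to C$ corresponds to a sheaf map $g^*E\to\sym^m E\otimes L$, and for $m>1$ there is no slope-theoretic mechanism forcing this to respect the $F_i$. In the examples where such preservation does hold (e.g.\ the negative section on a Hirzebruch surface), the reason is Mori-theoretic---the subvariety spans an extremal ray of $\NE(\PP E)$---not functoriality of HN. Once one sets up that cone argument, the blow-up becomes unnecessary: the slope computation shows $\Nef(\PP E)\neq\PEff(\PP E)$ whenever $E$ is unstable, and then Proposition~\ref{prop:nef=peff} gives KSC directly. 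This cone route is essentially how \cite{1802.07388} proceeds. Your third step (isogeny pullback on elliptic curves) is also too sketchy to evaluate---you have not explained why the induced map on the pulled-back bundle has the same first dynamical degree, nor how the degree-zero case is actually reached---and your closing paragraph openly concedes that the required dynamical-degree equalities are unverified. What you have written is a strategy, not a proof.
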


Thus it suffices to consider semi-stable and degree zero vector bundles on $E$. We will need the results of (\cite{AtiyahBundles}) which we now review. 

\begin{theorem}[Atiyah \cite{AtiyahBundles}]\label{thm:AtiyahClass}
Let $C$ be an elliptic curve defined over $\bar{\QQ}$. 

\begin{enumerate}
	\item  For each $r\geq 1$ there is a unique indecomposable degree $0$ rank $r$ vector bundle on $C$ that has a non-zero global section. We call this vector bundle $F_r$. Furthermore, $\dim H^0(C, F_r)=1$ (\cite[Theorem 5]{AtiyahBundles}). 
	
	\item Every indecomposable degree 0 vector bundle of rank $r$ is of the from $F_r\otimes L$ for some unique degree zero line bundle $L$ (\cite[Theorem 5]{AtiyahBundles}).
	
	\item $F_r\otimes F_s=\bigoplus_{i} F_{r_i}$ for some $r_i$ (\cite[Lemma 18]{AtiyahBundles}).
	
	\item $\det F_r=\OO_X$ (\cite[Theorem 5]{AtiyahBundles}).
	
	\item Let $L$ be a line bundle of degree $0$. Then $F_r\otimes F_s
	\otimes L$ has a global section if and only if $L=\OO_X$ (\cite[Lemma 17]{AtiyahBundles}).  
	
	\item $F_r$ is self dual (\cite[Corollary 1]{AtiyahBundles}).
	\item $F_r=\sym ^{r-1}F_2$ (\cite[Theorem 9]{AtiyahBundles}).
\end{enumerate}		
\end{theorem}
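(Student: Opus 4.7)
The plan is to follow Atiyah's inductive construction of the bundles $F_r$, leveraging two special features of an elliptic curve: $K_C = \OO_C$, so that Serre duality reads $H^1(E) \cong H^0(E^{\vee})^{\vee}$, and Riemann--Roch becomes $\chi(E) = \deg E$. I would set $F_1 := \OO_C$ and define $F_r$ for $r \geq 2$ inductively as the (unique up to isomorphism) non-split extension
\begin{equation}
0 \to \OO_C \to F_r \to F_{r-1} \to 0.
\end{equation}
The inductive hypothesis is that $F_{r-1}$ is indecomposable, self-dual, has trivial determinant, and satisfies $h^0(F_{r-1}) = 1$. Under these hypotheses $\mathrm{Ext}^1(F_{r-1}, \OO_C) \cong H^1(F_{r-1}^{\vee}) \cong H^0(F_{r-1})^{\vee}$ is one-dimensional by Serre duality, forcing existence and uniqueness of the non-split extension.

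Next I would verify that $F_r$ inherits the inductive properties. Triviality of $\det F_r$ is immediate from multiplicativity in the extension. The identity $h^0(F_r) = 1$ follows from the long exact cohomology sequence, because the boundary map $H^0(F_{r-1}) \to H^1(\OO_C)$ is cup product with the nonzero extension class, hence an isomorphism of one-dimensional spaces. Indecomposability of $F_r$ follows from a standard $\mathrm{Hom}$-argument using that the unique section lies in a single summand of any hypothetical direct-sum decomposition. Self-duality is obtained by dualizing the defining extension, checking that the resulting sequence is still non-split, and appealing to uniqueness in the (also one-dimensional) dual $\mathrm{Ext}^1$ group. For the classification (2), given any indecomposable degree-$0$ rank-$r$ bundle $E$, the function $L \mapsto h^0(E \otimes L^{-1})$ on $\Pic^0(C)$ achieves a nonzero value at a unique $L$ by Riemann--Roch plus semicontinuity; an extension-class argument then reduces to uniqueness in (1). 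The formula $F_r \cong \sym^{r-1} F_2$ is then checked by verifying that $\sym^{r-1} F_2$ is indecomposable of rank $r$ and degree $0$ with nonzero sections, and invoking (1) again.

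For the tensor product decomposition (3), the plan is to write $F_r \otimes F_s$ as a direct sum of indecomposables, use (2) to express each summand as $F_{r_i} \otimes L_i$ for some $L_i \in \Pic^0(C)$, and then show that each $L_i$ is trivial. This last step is forced by the nonvanishing criterion (5): if some $L_i \neq \OO_C$ appeared, one could extract a section violating (5). The main obstacle is thus property (5) itself, the statement that $F_r \otimes F_s \otimes L$ admits a global section if and only if $L = \OO_C$. This is not formal, since $\dim H^0(F_r \otimes F_s \otimes L)$ behaves subtly as $L$ varies; my plan for this step is a joint induction on $(r,s)$, interleaved with the construction of $F_r$, in which one uses the defining extension of $F_r$ to reduce the section-counting to that of $F_{r-1} \otimes F_s \otimes L$, and analyzes the resulting connecting maps using Serre duality and the fact that $\Pic^0(C)$ is torsion-free of positive dimension, so that generic $L$ contribute no sections.
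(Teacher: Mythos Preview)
The paper does not give its own proof of this theorem; it is quoted as background from Atiyah's paper \cite{AtiyahBundles}, with each item tagged by the precise reference in Atiyah's work, and no proof environment follows the statement. So there is no in-paper argument to compare against: your proposal is effectively a sketch of Atiyah's original proof, and the paper simply cites that proof rather than reproducing it.

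Your outline is broadly faithful to Atiyah's approach (the inductive construction of $F_r$ via the unique non-split extension of $F_{r-1}$ by $\OO_C$, the use of Serre duality with $K_C=\OO_C$, and the interleaved induction for (3) and (5)). One slip to flag: you invoke ``the fact that $\Pic^0(C)$ is torsion-free,'' which is false for an elliptic curve over $\bar{\QQ}$, since $\Pic^0(C)\cong C(\bar{\QQ})$ has abundant torsion. The property actually used in Atiyah's argument is that $\Pic^0(C)$ is positive-dimensional (equivalently, that a nontrivial $L\in\Pic^0(C)$ has $h^0(L)=h^1(L)=0$), not torsion-freeness; you should correct that phrasing. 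Also, your justification of (2) via ``$L\mapsto h^0(E\otimes L^{-1})$ achieves a nonzero value at a unique $L$ by Riemann--Roch plus semicontinuity'' is too breezy: Riemann--Roch gives $\chi=0$, and semicontinuity alone does not force a unique jumping point; Atiyah's actual argument uses the inductive extension structure and the computation of $h^0$ to pin down $L$.
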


\begin{proposition}\label{prop:symprop}
	Let $C$ be an elliptic curve defined over $\bar{\QQ}$ and let $F_r$ be the rank $r$ Atiyah bundle on $C$. Then for any $d\geq 1$ we have that \[\sym^d F_r=\bigoplus_{r_i}F_{r_i}\]

	for some integers $r_i$.
\end{proposition}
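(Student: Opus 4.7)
The plan is to induct on $d$, reducing everything to Atiyah's tensor product formula (item~(3) of Theorem~\ref{thm:AtiyahClass}) via a standard splitting together with Krull--Schmidt. The base case $d=1$ is immediate since $\sym^1 F_r = F_r$.

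For the inductive step, assume $\sym^{d-1} F_r \cong \bigoplus_i F_{s_i}$. The key observation is that in characteristic zero $\sym^d F_r$ is a direct summand of $F_r \otimes \sym^{d-1} F_r$. A splitting of the canonical multiplication map $\mu\colon F_r \otimes \sym^{d-1} F_r \twoheadrightarrow \sym^d F_r$ is provided by the polarization map $\sigma$ defined locally by
\[v_1 \cdots v_d \mapsto \frac{1}{d}\sum_{i=1}^{d} v_i \otimes (v_1 \cdots \widehat{v_i} \cdots v_d),\]
which satisfies $\mu \circ \sigma = \iden$. Since we work over $\bar{\QQ}$ the factor $1/d$ is well defined, so this polarization globalizes to a splitting of vector bundles on $C$.

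With this splitting in hand the induction runs smoothly: Atiyah's tensor product formula gives $F_r \otimes F_{s_i} \cong \bigoplus_j F_{t_{ij}}$ for each $i$, hence
\[F_r \otimes \sym^{d-1} F_r \cong \bigoplus_{i,j} F_{t_{ij}}\]
is already a direct sum of Atiyah bundles with no twists by non-trivial degree zero line bundles. Because Krull--Schmidt holds for coherent sheaves on the smooth projective curve $C$, any direct summand of such a bundle is itself a direct sum of a sub-collection of the $F_{t_{ij}}$'s. Applying this to the summand $\sym^d F_r$ yields the desired decomposition.

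I do not foresee any substantial obstacle; the only delicate point is the careful setup of the splitting $\sigma$, which is classical and relies only on our characteristic-zero hypothesis. Everything else is formal once item~(3) of Theorem~\ref{thm:AtiyahClass} is available to absorb the tensor products into Atiyah bundles at each inductive step.
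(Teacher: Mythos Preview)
Your argument is correct, but it proceeds quite differently from the paper's. The paper does not induct: it works directly with the surjection $F_r^{\otimes d}\twoheadrightarrow \sym^d F_r$, dualizes (using $F_r^*\cong F_r$) to exhibit $\sym^d F_r$ as a sub-bundle of $F_r^{\otimes d}$, and then uses semi-stability of $F_r^{\otimes d}$ to force every indecomposable summand of $\sym^d F_r$ to have degree~$0$, hence to be of the form $F_{r_j}\otimes L_j$ with $\deg L_j=0$. The twists are then killed by a Hom computation: writing $F_r^{\otimes d}=\bigoplus_i F_{l_i}$, one has $\hom(F_r^{\otimes d},\sym^d F_r)=\bigoplus_{i,j}H^0(C,F_{l_i}\otimes F_{r_j}\otimes L_j)$, and Atiyah's vanishing (item~(5)) forces $L_j=\OO_C$ since the quotient map is surjective. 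Your route is more economical in its use of Atiyah's results, needing only item~(3) together with Krull--Schmidt and the characteristic-zero splitting, whereas the paper draws on items~(3), (5), and~(6) plus the semi-stability framework. On the other hand, the paper's argument is a single pass with no induction, and its Hom-vanishing step makes transparent exactly why no nontrivial degree-zero twist can appear; your Krull--Schmidt step hides this behind a uniqueness statement. Both are valid and roughly comparable in length.
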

\begin{proof}
	Recall that we have a canonical surjection 
	\[\psi\colon F_r^{\otimes d}\ra \sym^d F_r\ra 0 \]
	
	By (\ref{thm:AtiyahClass}) we have that $ F_r^{\otimes d}=\bigoplus_i F_{l_i}$ for some integers $l_i$. Because $\deg F_r=0$ we have  $\deg \sym^d F_r=0$. By taking the dual endomorphism we have a mapping
	
	\[
	0\ra (\sym^d F_r)^*\ra (E^{\otimes d})\]
	
	Since we are in characteristic 0 and $F_r^*=F_r$ we have that $(\sym^d F_r)^*=\sym^d F_r^*=\sym^d F_r$ and $(F_r^{\otimes d})^*=F_r^{\otimes d}$. Thus $\sym^d F_r$ is a sub-bundle of $F_r^{\otimes d}$. Since $F_r^{\otimes d}$ is semi-stable of degree zero (being the tensor product of semi-stable vector bundles) we have that $\sym^d F_r$ is also semi-stable and degree $0$. Now let $W$ be an indecomposable component of $\sym^dF_r$. Since $\deg \sym^d F_r=0$ we have that $\deg W\leq 0$ as otherwise $W$ destabilizes $\sym^d F_r$. Now suppose that $\deg W<0$. Then since $\deg \sym^d F_r=0$ there is some other indecomposable component $W^\prime$ with $\deg W^\prime >0$. This contradicts the argument just given. So $\deg W=0$. Thus each component of $\sym^d F_r$ is degree $0$. It follows that each component is also semi-stable, being a degree zero sub-bundle of a semi-stable bundle of degree degree zero. From (\ref{thm:AtiyahClass}) we have that 
	
	\[\sym^d F_r=\bigoplus_{j} F_{r_j}
	\otimes L_j\]
	
	where $L_j$ is some degree 0 line bundle. Now we have that 
	
	\[\hom(F_r^{\otimes d},\sym^d F_r)=\hom(\oplus_{i}F_{l_i},\oplus_j F_{r_j}\otimes L_j)=\oplus_{i,j}\hom(F_{l_i},F_{r_j}\otimes L_j)\]
	
	Now we have that $\hom(F_{l_i},F_{r_j}\otimes L_j)=H^0(C,F_{l_i}^*\otimes F_{r_j}\otimes L_j)=H^0(C,(F_{l_i}\otimes F_{r_j}\otimes L_j)$ as the Atiyah bundle is self dual. Suppose that $L_j\neq \OO_C$ for some $j$. Then as $L_j$ is of degree $0$ we have that $H^0(F_{l_i}\otimes F_{r_j}\otimes L_j)=0$ by (\ref{thm:AtiyahClass}). However, this is a contradiction as $\psi$ is a surjection. So we have that each $L_j=\OO_C$ and the claim follows.
	
\end{proof}

\begin{proposition}\label{prop:antiiitaka}
Let $E$ be a semi-stable degree zero vector bundle on an elliptic curve $C$. \begin{enumerate}
	\item We may write $E=\bigoplus_{k=0}^NF_{r_k}\otimes L_k$ where $F_{r_k}$ are the rank $r_k$ Atiyah bundles and $L_k$ is a degree zero vector bundle.
	\item If $E=\bigoplus_{k=0}^NF_{r_k}\otimes L_k$ with $L_i$ degree zero line bundles with $L_0=0$ then $\kappa(-K_{\PP E})\geq 0$. 
\end{enumerate} 
\end{proposition}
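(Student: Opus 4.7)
For part (1), the plan is to invoke Atiyah's classification directly. Any vector bundle on $C$ decomposes as a direct sum of indecomposables $E = \bigoplus_k E_k$. Since $E$ is semistable of slope $0$ and $\sum \deg E_k = 0$, no summand can have positive slope, forcing $\deg E_k = 0$ for each $k$. Then \ref{thm:AtiyahClass}(2) gives $E_k = F_{r_k} \otimes L_k$ for some uniquely determined degree $0$ line bundle $L_k$, yielding the claimed decomposition.

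For part (2), I would reduce the global sections of $-K_{\PP E}$ to the base $C$ via the projection formula and then exhibit a section explicitly. Since $K_C \sim 0$, the canonical bundle formula for a projective bundle gives $-K_{\PP E}\sim r\,\OO_{\PP E}(1)-\pi^*\det E$ with $r=\rk E$, so
\[
H^0(\PP E,-K_{\PP E})\;\cong\;H^0\bigl(C,\sym^r E\otimes(\det E)^{-1}\bigr).
\]
Using $\det F_{r_k}=\OO_C$ from \ref{thm:AtiyahClass}(4), I compute $\det E=\bigotimes_k L_k^{r_k}$, and the characteristic-zero decomposition of symmetric powers of a direct sum gives
\[
\sym^r E\;=\;\bigoplus_{|\alpha|=r}\bigotimes_k\sym^{\alpha_k}(F_{r_k})\otimes L_k^{\alpha_k}.
\]
The key step is to isolate the summand indexed by $\alpha=(r_0,r_1,\ldots,r_N)$: after tensoring with $(\det E)^{-1}$ the line-bundle factors $L_k^{r_k}$ exactly cancel, leaving the bundle $\bigotimes_k\sym^{r_k}(F_{r_k})$.

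It then remains to check that $\bigotimes_k\sym^{r_k}(F_{r_k})$ carries a non-zero global section, which will force $\kappa(-K_{\PP E})\geq 0$. By \ref{prop:symprop} each factor $\sym^{r_k}(F_{r_k})$ is a direct sum of Atiyah bundles, and iterated application of \ref{thm:AtiyahClass}(3) shows that the full tensor product is again a direct sum of bundles $F_s$, each of which carries a non-zero global section by \ref{thm:AtiyahClass}(1). The main bookkeeping obstacle is the symmetric-power expansion together with matching $L_k$-powers against $\det E$ to obtain a genuinely trivializing cancellation; the hypothesis $L_0=\OO_C$ enters naturally since it makes $F_{r_0}$ a genuine direct summand of $E$, ensuring $H^0(C,E)\neq 0$ as required in the application to (\ref{thm:bundlethm1}).
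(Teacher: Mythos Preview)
Your argument is correct and follows the same overall strategy as the paper: part~(1) is handled identically, and for part~(2) both arguments push $H^0(\PP E,-K_{\PP E})$ down to $H^0(C,\sym^r E\otimes(\det E)^{-1})$, expand the symmetric power of the direct sum over multi-indices, and isolate a summand that carries a section via (\ref{prop:symprop}) and (\ref{thm:AtiyahClass}). The genuine difference is the choice of multi-index. The paper takes $j_0=r-N$ and $j_k=1$ for $k\geq 1$, so the accumulated line-bundle twist on that summand is $\bigotimes_{k\geq 1}L_k$; the hypothesis $L_0=\OO_C$ is then invoked together with the stated identity $\det E=\sum_k L_k$ to effect the cancellation. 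Your choice $\alpha_k=r_k$ instead produces the twist $\bigotimes_k L_k^{r_k}$, which matches your (correct) computation $\det E=\bigotimes_k L_k^{r_k}$ on the nose, with no condition on $L_0$ required. This buys you a cleaner and slightly stronger statement---as you already observe, $L_0=\OO_C$ is only needed downstream to secure $H^0(C,E)\neq 0$ for the application to (\ref{thm:bundlethm1})---and it sidesteps the discrepancy between the paper's formula $\det E=\sum_k L_k$ and the actual value $\sum_k r_k L_k$, which do not agree unless every $r_k=1$ for $k\geq 1$.
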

\begin{proof}
As $E$ is semi-stable of degree 0 every sub-sheaf has degree at most $0$. Let $W_0,...,W_N$ be the components of $E$. If $\deg W_i<0$ as $\sum_{k=0}^N \deg W_i=0$ there must be some $W_i$ with $\deg W_i>0$ which would destabilize $E$. Thus every indecomposable component is degree zero. It follows from the definition of semi-stable that each irreducible component is also semi-stable. By (\ref{thm:AtiyahClass}) we have that $W_i=F_{r_i}\otimes L_i$ for some degree 0 line bundle as needed. We now turn to part 2. We have from the Euler exact sequence that

\[-K_{\PP E}=\OO_{\PP E}(r)-\det E\]

where $r$ is the rank of $E$. Then

\begin{align*}
H^0(\PP E,-K_{\PP E})&=H^0(\PP E, \OO_{\PP E}(r)\otimes \det E^{-1})\\
&=H^0(C,\pi_*(\OO_{\PP E}(r))\otimes \det H^{-1})=H^0(C,\sym^r (E)\otimes \det E^{-1})
\end{align*}

We have

\[\sym^r(E)=\bigoplus_{i_0+...+i_N=r}\bigotimes_{k=0}^N\sym^{i_k}(F_{r_k})\otimes L_k^{\otimes i_k}\]

from our assumed decomposition of $E=\bigoplus_{k=0}^NF_{r_k}\otimes L_k$. Therefore \[\sym^r(E)\otimes\det E^{-1}=\bigoplus_{i_0+...+i_N=r}\bigotimes_{k=0}^N\sym^{i_k}(F_{r_k})\otimes L_k^{\otimes i_k}\otimes \det E^{-1}\]

Let $j_0=r-N$ and $j_k=1$ for $k=1,...,N$. Then $\sum_{k=0}^Nj_k=r-N+N=r$. Therefore

\[\bigotimes_{k=0}^N\sym^{j_k}(F_{r_k})\otimes L_k^{\otimes j_k}\otimes \det E^{-1}\]

is a direct summand of $\sym^r(E)\otimes\det E^{-1}$. Since $\det E=\sum_{k=0}^N L_k$ and $L_0=\OO_C$ we have that 

\[\bigotimes_{k=0}^N\sym^{j_k}(F_{r_k})\otimes L_k^{\otimes j_k}\otimes \det E^{-1}=\bigotimes_{k=0}^N\sym^{j_k}(F_{r_k})\]

since $\sum_{k=0}^Nj_kL_k=\sum_{k=1}^Nj_kL_k=\sum_{k=1}^NL_k$ using $L_0=\OO_C$ and the construction of the $j_k$. We have shown that $\bigotimes_{k=0}^N\sym^{j_k}(F_{r_k})$ (which has a global section by (\ref{prop:symprop})) is a direct summand of $\sym^r(E)\otimes\det E^{-1}$. Therefore $-K_{\PP E}$ has a non-zero global section and $\kappa(-K_{\PP E})\geq 0$ as required.

\end{proof}

\begin{corollary}\label{cor:intamlike1}
Let $C$ be an elliptic curve defined over a number field. Let $E$ be a vector bundle on $C$. Then the Kawaguchi-Silverman conjecture holds for surjective endomorphisms of $\PP E$ that admit a good eigenspace.
\end{corollary}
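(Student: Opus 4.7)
The plan is to reduce the setup to the hypotheses of Theorem \ref{thm:bundlethm1} applied with $X = C$, which has Picard number one. First I would invoke Theorem \ref{thm:JohnMatt} to replace $E$ by a semi-stable vector bundle of degree zero on $C$. This is the delicate step: the reduction must carry along the good eigenspace hypothesis on $f$. I would verify that the operations involved (twisting by line bundles pulled back from $C$, and passage to iterates) preserve the Jordan block structure of $f^*$ on $V_H$ up to rescaling, so that the good eigenspace property persists.

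With $E$ now semi-stable of degree zero, apply Proposition \ref{prop:antiiitaka}(1) to write $E = \bigoplus_{k=0}^N F_{r_k} \otimes L_k$ in terms of Atiyah components twisted by degree-zero line bundles. Since $\PP E \cong \PP(E \otimes L_0^{-1})$ canonically, and this isomorphism transports $f$ to a corresponding endomorphism admitting a good eigenspace, I replace $E$ by $E \otimes L_0^{-1}$ to arrange $L_0 = \OO_C$. The summand $F_{r_0}$ then guarantees $H^0(C, E) \neq 0$. I then verify the remaining hypotheses of Theorem \ref{thm:bundlethm1}: nefness of $E$ holds because every quotient of a semi-stable degree-zero bundle on a smooth curve has non-negative slope, which on a curve is equivalent to nefness; $E$ is not ample since $\deg E = 0$; and $\kappa(\PP E, -K_{\PP E}) \geq 0$ is exactly Proposition \ref{prop:antiiitaka}(2) with $L_0 = \OO_C$. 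Theorem \ref{thm:bundlethm1} then delivers the Kawaguchi-Silverman conjecture for $f$.

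The main obstacle is the first step: ensuring the good eigenspace hypothesis survives the reduction of Theorem \ref{thm:JohnMatt}. The later steps are essentially bookkeeping with Atiyah's classification, already packaged in Proposition \ref{prop:antiiitaka}, together with the harmless line-bundle twist used to normalize $L_0$. If the reduction in \cite{1802.07388} is constructed via rational maps or twists whose Picard-group action is explicit, then preservation of the Jordan structure of $f^*$ on $V_H$ is a direct check; otherwise one would need to adapt the argument of \cite{1802.07388} to extract a version compatible with the good eigenspace condition.
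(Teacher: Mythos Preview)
Your proposal is correct and follows essentially the same route as the paper: reduce to semi-stable degree zero via Theorem~\ref{thm:JohnMatt}, decompose via Proposition~\ref{prop:antiiitaka}(1), twist so that $L_0 = \OO_C$, and then apply Theorem~\ref{thm:bundlethm1} using Proposition~\ref{prop:antiiitaka}(2) for the anti-canonical condition. You are in fact more careful than the paper in spelling out the remaining hypotheses of Theorem~\ref{thm:bundlethm1} (nefness, non-ampleness, $H^0(C,E)\neq 0$).

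The concern you flag about whether the reduction of Theorem~\ref{thm:JohnMatt} preserves the good eigenspace hypothesis is legitimate and is not addressed in the paper's proof either; the paper simply invokes the reduction and proceeds. The line-bundle twist step, by contrast, is genuinely harmless: $\PP E \cong \PP(E\otimes L_0^{-1})$ is an isomorphism of varieties, so $f$ transports to an endomorphism with identical action on $\Pic_\RR$ and hence identical Jordan block data on $V_H$.
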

\begin{proof}
By (\ref{thm:JohnMatt}) we may assume that $E$ is semi-stable fo degree zero. Using (\ref{prop:antiiitaka}) we may write $E=\bigoplus_{k=0}^{N}F_{r_k}\otimes L_K$ where the $L_k$ are degree zero line bundles. After twisting by $L_0^{\otimes -1}$ we may assume that $L_0=\OO_C$. Therefore $\kappa(-\PP_{E})\geq 0$ by (\ref{prop:antiiitaka}). By (\ref{thm:bundlethm1}) the Kawaguchi-Silverman conjecture is true for $f$ as we have assumed the existence of a good eigenspace.
\end{proof}

\bibliography{bib}{}
\bibliographystyle{plain}

\end{document}